\title{Detecting relevant changes in  the  spatiotemporal mean function}
\author{
  { Holger Dette, Pascal Quanz} \\
{ Ruhr-Universit\"at Bochum} \\
{ Fakult\"at f\"ur Mathematik} \\
{ 44780 Bochum, Germany}
}
\date{}
\def\br#1{\left({#1}\right)}                      % auto sized brackets %
\def\sqbr#1{\left[{#1}\right]}                    % auto sized square brackets %
\def\cbr#1{\left\{{#1}\right\}}                   % auto sized curly brackets %
\def\gbr#1{\lfloor #1 \rfloor}                    % Gaussian brackets
\def\pad{\phantom{a}}                             % padding %
\def\ppad{\phantom{aa}}                           % padding %
\def\pppad{\phantom{aaa}}                         % padding %
\def\eqo#1{\stackrel{\text{#1}}{=}}               % = with text %
\def\norm#1{\left\lVert#1\right\rVert}            % norm %
\def\innpr#1{\langle #1 \rangle}                  % inner product
\def\convd{\xrightarrow{\phantom{a} \mathpcal{D} \phantom{a}}}
\def\convproc{\rightsquigarrow}
\def\P#1{\mathrm{P} \br{#1}}
\def\E#1{\e \sqbr{#1}}
\def\p{\mathrm{P}}
\def\e{\mathrm{E}}
\def\cov{\mathrm{Cov}}
\DeclareMathAlphabet{\mathpcal}{OMS}{pzc}{m}{n}
\DeclareMathOperator*{\argmax}{arg\,max}
\theoremstyle{definition}
\newtheorem{dfn}{Definition} %[section]
\newtheorem{lem}[dfn]{Lemma}
\newtheorem{rem}[dfn]{Remark}
\newtheorem{prop}[dfn]{Proposition}
\newtheorem*{rem*}{Remark}
\newtheorem{thm}[dfn]{Theorem}
\newtheorem*{thm*}{Theorem}
\newtheorem{ass}{Assumption}
\numberwithin{dfn}{section}
\numberwithin{ass}{section}
\begin{document}

    \maketitle

\begin{abstract}
For  a spatiotemporal process 
$\{X_j(s,t)  |  ~s \in S~,~t \in T \}_{j =1, \ldots , n} $, where $S$ denotes the set of spatial locations and $T$ the time domain, we  consider the problem of testing  for a  change in the sequence of mean functions. In contrast to most of the literature  we are not interested in arbitrarily small changes, but  only in  changes with a  norm exceeding a given threshold. 
Asymptotically distribution free 
 tests are proposed, which do not require
the estimation of the long-run spatiotemporal covariance structure. In particular we consider 
a fully functional approach and a test based 
on the cumulative sum paradigm, investigate the large sample properties of the corresponding test statistics and study their finite sample properties by means of simulation study.
\end{abstract} 
  
  \medskip
  \noindent
  Keywords: Spatiotemporal process, functional data analysis, change point analysis, self-normalization, relevant hypotheses

\noindent AMS Subject classification: 
62M10, 62R10

    \section{Introduction}     \label{sec1} 
      \def\theequation{1.\arabic{equation}}
\setcounter{equation}{0}

In many applications such as in  the analysis of weather- or pollution-related data, measurements are obtained at different spatial locations over a certain time period at a high temporal frequency. 
Often there exists a natural segmentation of the time series such that
it is reasonable to model at each 
spatial component, say $s$,  and on each segment, say $j$, the resulting data 
 as a function, say $t \to X_j(s,t)$ of the time  (on the corresponding segment). 
 Typical examples are  measurements at different geographical locations. For example, within the United States Climate Reference Network (USCRN) high resolution infrared surface temperature measurements  at $126$  stations in the US are publicly available on the website of the NOAA U.S. government agency. Here at  each location $s$, and each day $j$ 
one observes the  daily temperature curve $t \to X_j(s,t)$ 
 \citep{Diam13}. Other examples include yearly curves at different locations 
 over different years such as the daily mean temperature records from $1916$ to $2018$ in $40$ representative Canadian cities, which are publicly available from the government of Canada website.
 %:  \url{https://climate.weather.gc.ca/historical_data/search_historic_data_e.html}. 
 In these applications   data is typically 
 modelled in the form
 \begin{equation}
     \label{1.1}
     X_j(s,t) ~,~~ s \in S~,~~t \in T~,~~j=1, \ldots , n, 
 \end{equation}
 where $S$ is a finite set, $T$ is a dense set (we will later consider an interval). 
 
  A typical question in this context is, if the mean, say  $\{\mu_j(s,t) |  ~s \in S~,~t \in T \}_{j =1, \ldots , n} $, 
 of a spatiotemporal process 
 $\{X_j(s,t) |  ~s \in S~,~t \in T \}_{j =1, \ldots , n} $ has changed  over a specific time period.
 For a fixed location this corresponds to  the meanwhile classical
 change point problem in functional data analysis
 \citep[see, for example,][among many others]{berkes2009,zhang2011,astonkirch2012,HorvathKokoskza2012,Aue2015DatingSB,dette2018}. On the other hand, in the spatiotemporal context as considered in model \eqref{1.1} the change point problem is not so well studied.
 Recently, \cite{gromenko} proposed a test for the hypothesis of the existence of a change point in  the mean function, say $\mu_j (s,t)  = \mathbb{E} [X_j(s,t)]$,
 in a sequence of independent observations.
 They formulated the null hypothesis and alternative in the form
 \begin{equation*}
  %   \label{1.2}
     H_0: \mu_1 = \mu_2 = \ldots = \mu_n
 \end{equation*}
 and
   \begin{equation*}
%      \label{1.3} 
 H_1:  \mu:=
 \mu_1= \ldots = \mu_{\lfloor n \vartheta_0 \rfloor}~\not = ~  
\mu_{\lfloor n \vartheta_0 \rfloor +1} = \ldots = \mu_n   =: \mu + \delta
  \end{equation*}
  for some $\vartheta_0 \in  (0,1)$,
  and combined the  CUSUM principle with classical principal component analysis to construct a test for these hypotheses, which generalizes the approach of
 \cite{berkes2009} to  the spatiotemporal model \eqref{1.1}.  
 We also refer to the recent paper of  \cite{zhao2021composite} who proposed
change point analysis based on  a composite likelihood criterion for a different spatiotemporal model.
 
In contrast to this literature (and also to most of the literature on change point analysis
for functional data) this paper takes a different look at the change point problem. Our work is motivated by the observation  that in many applications one might  not be interested in
arbitrary  ``small''  changes in the mean function  (in fact, one often does not believe that this function is completely constant over the whole time period for all locations). As an alternative we therefore propose to test the hypotheses 
of the existence of a  time point $\lfloor n \vartheta_0 \rfloor$, such  that the difference, say $\delta$,  between the mean functions before and after this point in time
is {\it relevant}. For this purpose we define two measures
of relevance. The first  one corresponds to the fully functional approach as advocated in \cite{Aue2015DatingSB} and is based on 
a norm of
the difference $\delta$. The second one is related to the PCA approach as considered in \cite{berkes2009} and \cite{gromenko} 
and uses the norm of the projection of the difference on  the first 
principle components. The null hypothesis is then stated in the form that the norm is less or equal than a given threshold $\Delta >0$, that is $H_0: \| \delta \|^2 \leq \Delta$ (see Section \ref{sec2} for details). We derive pivotal tests for both testing problems, which 
neither require estimation of the long-run variance  of the process $\{X_j\}_{j=1, \ldots , n}$ nor the the estimation of the covariance structure of the random field $ \{ X_j(s,t) | s \in S,~t \in T\} $. 

In Section \ref{sec2} we introduce the basic terminology and carefully define the two types of hypotheses considered in this paper.
Section \ref{sec3} is devoted to the  fully functional approach, while the problem of testing relevant hypotheses by projections on the functional principal components is investigated in Section \ref{sec4}. Finally,  in 
Section \ref{sec5} we illustrate our approach by means of a small simulation study and by
the analysis of a data example.

\section{Relevant changes in the spatiotemporal mean}
\label{sec2} 
  \def\theequation{2.\arabic{equation}}
\setcounter{equation}{0}

For a finite set  $S$  let $ L^2 (S \times \sqbr{0,1})$ denote the set of all square integrable functions of the form $f:S \times [0,1] \to \mathbb{R} $ with the common inner product
$$
\innpr{f,g} = \sum_{s \in S} \int_T f(s, t) g(s, t) \mathrm{d}t
$$
and corresponding norm  $\| f \| = \innpr{f,f} ^{1/2}$.
  Let 
  $\cbr{X_j}_{j \in \mathbb{N}}$ be a sequence of square integrable random functions on $S \times [0,1]$, where 
  \begin{equation}
      \label{hd1} 
   X_j = \mu_j + \eta_j,~j \in \mathbb{N}~, 
  \end{equation}
 $\cbr{\eta_j}_{j \in \mathbb{N}}$ is a centered error process and $\{ \mu_j \}_{j\in  \mathbb{N}} $ is a sequence of mean functions in $ L^2 (S \times \sqbr{0,1})$.   We 
 assume that  the  mean functions are  of the form 
  \begin{equation*} 
  %\label{hd1a}
\mu =  \mu_1= \ldots = \mu_{\lfloor n \vartheta_0 \rfloor}~,~~
\mu_{\lfloor n \vartheta_0 \rfloor +1} = \ldots = \mu_n =\mu + \delta,
 \end{equation*}
 where $\mu, \delta$ denote deterministic, but unknown elements in $L^2 (S \times \sqbr{0,1})$ and $\vartheta_0 \in \br{0,1}$ is a potential  (unknown) change point. 
 The case $\delta =0$ corresponds to the situation  of no change point. As explained in the introduction, we  are  not interested in ``small'' deviations before and after a potential change point 
and therefore consider the problem of  monitoring the sequence for a relevant change in the mean function by testing the {\it relevant } hypotheses 
 \begin{equation} \label{hd2}
H_0:  \|  \delta \|^2  \leq \Delta 
~~{\rm versus  } ~~ H_1:  \|  \delta \|^2  >  \Delta.
 \end{equation}
Here $\Delta >0$ is a predefined threshold, which defines the difference before and after 
the time point $\lfloor n \vartheta_0 \rfloor$
 as relevant. Note that 
the case $\Delta=0$ corresponds to the ``classical'' hypotheses 
\citep[see][]{gromenko},
but  this case  is not considered  here.
Our interest in hypotheses of the from \eqref{hd2} with $\delta >0$ 
stems from the fact that in applications it is often questionable  to look 
for arbitrary  small deviations. Instead it is more
reasonable to focus on (scientifically) relevant deviations, which are here defined by  the   threshold $\Delta$ in \eqref{hd2}. The choice of this threshold  depends   sensitively
on the specific application \citep[see Remark \ref{remarkh0} for some discussion and][for an example in the context of portfolio analysis based on multivariate data]{Dette2014b}.
We also note that for hypotheses of the form \eqref{hd2} the choice of the norm matters as objects
might be identified as close with respect to one norm (such as an $L^2$), while they might be considered 
as different with respect to another norm (such as the sup-norm). Moreover, we also mention that 
the null  hypothesis and alternative in \eqref{hd2} can easily be changed,  i.e. 
 \begin{equation} \label{hd2a}
H_0:  \|  \delta \|^2  > \Delta 
~~{\rm versus  } ~~ H_1:  \|  \delta \|^2 \leq    \Delta.
 \end{equation}
This formulation is attractive because  it allows to decide  for a non-relevant change (such that one can continue working under the assumption of a nearly constant mean function) at a controlled type I error.
For real valued data,  hypotheses of the form \eqref{hd2} and \eqref{hd2a} 
have found considerable attention in the literature \citep[see, for example, the monographs of][]{chowliu1992,wellek2010testing}.
This concept has also been used by \cite{liubrehaywynn2009,gsteiger2011}  and \cite{detmolvolbre2015}  to establish the similarity of different parametric regression curves  which are estimated from real valued data.
In the context of functional data analysis, relevant hypotheses have been considered by 
\cite{fogarty2014,dette2018} and \cite{dette} among others. 
A pivotal test for  the hypotheses \eqref{hd2}  (and as a consequence also for the hypotheses \eqref{hd2a}) 
will be developed in Section \ref{sec3}.

Recently, \cite{gromenko} considered a different quantity to measure deviations of the difference $\delta$
from the function $\delta \equiv 0$, which is closely related to functional principal component analysis. More precisely, assume that ${\cal B}:= \{ b_1, b_2 , \ldots  \} $ is a basis in $L^2(S \times [0,1])$, such that the 
linear span of ${\cal B}$ is dense in $L^2(S \times [0,1])$. Then these authors proposed to test, for 
a fixed order $d \in \mathbb{N}$, whether the sum of the squared scores $ \sum_{k=1}^d \langle \delta , b_k\rangle^2 $ vanishes.
In the context of testing relevant hypotheses, we are therefore interested in testing hypotheses
of the form
$$
    H_0: \sum_{k=1}^d \innpr{\delta,  b_k}^2 \leq \Delta~~{\rm versus ~~}  H_1: \sum_{k=1}^d \innpr{\delta, b_k}^2 > \Delta~.
$$
A pivotal test for these hypotheses, where the basis functions are given by the eigenfunctions of a 
convex combination of the covariance kernels before and after the change point, will be developed in Section \ref{sec4}.

\smallskip 

We   conclude this section by presenting several assumptions, which are required 
 to prove the results in the next  and the following sections, 
\begin{ass}\label{ass_2nd} ~~
    \begin{enumerate}
        \item[(A1)] The  process $\cbr{X_j}_{j \in \mathbb{Z}}$ in model \eqref{hd1} satisfies
\begin{align*}
    X_j = \begin{cases}
        \mu + \eta_j^{(1)} & j \leq n \vartheta_0,\\
        \mu + \delta + \eta_j^{(2)}, & j > n \vartheta_0,
    \end{cases}
\end{align*}
where $\{ \eta_j^{(1)} \}_{j \in \mathbb{Z}} $ and  $\{ \eta_j^{(2)} \}_{j \in \mathbb{Z}} $ are stationary processes in $L^2 (S \times \sqbr{0,1})$.

        \item[(A2)] 
        $\{ \eta_j^{(1)} \}_{j \in \mathbb{Z}} $ and  $\{ \eta_j^{(2)} \}_{j \in \mathbb{Z}} $
         form sequences  of Bernoulli shifts, i.e. there exist a measurable space $\mathpcal{S}$,  measurable functions $f_1,f_2 : \mathpcal{S}^\infty \longrightarrow L^2 (S \times \sqbr{0,1})$ and a sequence of i.i.d,  $\mathpcal{S}$-valued
         and jointly (in $(s, t, \omega)$) measurable random functions $\cbr{\varepsilon_j}_{j \in \mathbb{Z}}  = \{ \varepsilon_j (s, t, \omega)
         \}_{j \in \mathbb{Z}} 
         $ 
         such that
         $$\eta_j^{(\ell )}   = f_\ell (\varepsilon_j , \varepsilon_{j-1} , ...)  ~~~~~~~  ~(\ell=1,2) $$
   for all $ j \in \mathbb{Z}$.
        \item[(A3)] There exists a constant   $\psi \in \br{0,1}$ such that $\e \norm{\eta_j^{(\ell )}}^{2 + \psi} < \infty$~ ($\ell =1,2$).
        
        \item[(A4)] The sequences $\{\eta_j^{(1 )}\}_{j \in \mathbb{Z}}$  and $\{\eta_j^{(2 )}\}_{j \in \mathbb{Z}}$ can be approximated by $m$-dependent sequences $\{\eta_{j,m}^{(1 )}\}_{j \in \mathbb{Z}}$  and $\{\eta_{j,m}^{(2 )}\}_{j \in \mathbb{Z}}$, respectively,  in the sense that for some $\kappa > 2 + \psi$ $$\sum_{m=1}^\infty \br{\e \norm{\eta_0^{(\ell )} - \eta_{0,m}^{(\ell )} }^{2 + \psi} }^{1/\kappa} < \infty ~~~~~~~  ~(\ell=1,2)~,
        $$
        where $\eta_{j,m}^{(\ell )}$ is defined by
        \begin{align} \label{hd10}
            \eta_{j,m}^{(\ell )}  = f_\ell (\varepsilon_j , ..., \varepsilon_{j-m+1} , \mathbf{\varepsilon}^*_{j,m} )~~~~~~~  ~(\ell=1,2),
        \end{align}
        with $\mathbf{\varepsilon}^*_{j,m}  = (\varepsilon^*_{j,m,j-m} , \varepsilon^*_{j,m,j-m-1} , ...),$ and  $\varepsilon^*_{j,m,k}$ are i.i.d copies of $\varepsilon_0$ and independent of $\cbr{\varepsilon_j}_{j \in \mathbb{Z}}$.
    \end{enumerate}
\end{ass}

Note that our  assumptions are different from  those in \cite{gromenko}, who considered an  independent and identically distributed error process $\{\eta_j \}_{j\in \mathbb{Z}}$. 
In particular, we allow for different long-run variances before and after the change point $\lfloor n \vartheta_0 \rfloor$.
Moreover, these authors  postulate  separability in the spatiotemporal variance structure (in our case    a long-run variance),
which means that it factors into a purely spatial and a purely temporal component. This assumption simplifies the definition and the asymptotic analysis of their
test statistics substantially. 
%In particular, it  leads  to limiting distributions which are functionals of standard Gaussian distributions.  
We will demonstrate 
below that, by using the concept of self-normalization, we can construct 
(asymptotically) pivotal test statistics  for relevant hypotheses without any of these assumptions.

\section{Fully functional detection of relevant change points} 
\label{sec3} 
  \def\theequation{3.\arabic{equation}}
\setcounter{equation}{0}

We first consider a fully functional  approach for testing the relevant hypotheses in \eqref{hd2}.
As in the case of the classical hypothesis
$H_0: \|\delta \| =  0$,
it is based on the CUSUM 
statistic, but it turns out that for relevant hypotheses it will be more difficult to obtain asymptotic quantiles of a corresponding test statistic.
To be precise, we 
consider  the common  estimator for the unknown change point $\vartheta_0$   \citep[see][among many others]{hariz2007,jandhyala2013}
defined by
\begin{align}\label{def_changepoint_estimator}
    \hat{\vartheta}_n := \frac{1}{n} \argmax_{ \gbr{n \varepsilon} +1 \leq k \leq n - \gbr{n \varepsilon}}
    \frac{k(n-k)}{n^2} \norm{\frac{1}{k} \sum_{i=1}^k X_i - \frac{1}{n-k} \sum_{i=k+1}^n X_i }^2  ~,
\end{align}
where $\varepsilon >0$ is a small predefined constant.
It can be shown by similar arguments 
 as in Proposition 3.1 of  \cite{dette}
 that, under Assumption \ref{ass_2nd}, the estimator  
 $ \hat{\vartheta}_n$ is consistent,
 whenever $\norm{\delta}^2 >0$ and 
 $\vartheta_0 \in (\varepsilon, 1- \varepsilon)$,
 that is 
 \begin{align}
    \label{prop:changepoint_conv_rate}\hat{\vartheta}_n = \vartheta_0 + o_\p (n^{-1/2})~
     \end{align}
     as $n \to \infty$. 
Next, we define for $\lambda \in [0,1]$, $\vartheta \in (\varepsilon, 1- \varepsilon) $ the
 quantity
\begin{align} \label{hd3}
    D_n ( \lambda, \vartheta) := \frac{1}{\gbr{n \vartheta}} \sum_{j=1}^{\gbr{\lambda \gbr{n \vartheta}}} X_j - \frac{1}{n - \gbr{n \vartheta}} \sum_{j=\gbr{n \vartheta} + 1}^{\gbr{n \vartheta} + \gbr{\lambda (n - \gbr{n \vartheta})}} X_j ~~ \in L^2 (S \times [0,1])~,
\end{align}
where we also use the notation $D_n ( s,t, \lambda, \vartheta)$ simultaneously 
to make the 
dependence on the spatial and temporal component explicit. 
Note that   $D_n$ is well defined if $\varepsilon \geq \frac{1}{n}$, and we will assume throughout this paper that $n$ is sufficiently large such that this condition is satisfied.
If  $\vartheta =k/n$ and $\lambda =1 $  the quantity 
$ D_n ( 1, k/n) $   coincides with the expression in the squared 
norm in \eqref{def_changepoint_estimator}. Therefore,  $D_n ( 1, \hat{\vartheta}_n ) $ is a natural   estimator of the function $\delta$, which defines the difference before and after the change point. Consequently, it is reasonable to reject the null hypothesis in \eqref{hd2} for large values of the statistic
$$
\| D_n ( 1, \hat{\vartheta}_n ) \|.
$$
It will be shown later that 
$\sqrt{n} \big ( \| D_n ( 1, \hat{\vartheta}_n ) \|^2 - \| \delta \|^2 \big ) $ converges weakly to a 
normal distribution with a complicated variance depending on a linear combination of the long-run variances of the processes    $\{ \eta_j^{(1)} \}_{j \in \mathbb{Z}} $ and  $\{ \eta_j^{(2)} \}_{j \in \mathbb{Z}} $. In order to avoid its estimation, we will construct a pivotal statistic. Our main tool for this construction is the following result, which provides the weak convergence of the process 
$ \big   
\{\sqrt{n} ( \| D_n (\lambda, \hat{\vartheta}_n) \|^2  - \lambda^2 \| \delta \|^2 ) \big  \}_{\lambda \in [0,1]}$.
For its statement we denote by 
 \begin{align*} 
 % \label{hd4}
        K_\ell ((s_1, t_1), (s_2, t_2)) := \sum_{h \in \mathbb{Z}} \cov (\eta^{(\ell)}_0 (s_1, t_1), \eta^{(\ell)}_h (s_2, t_2))~~
    \end{align*}
the long-run covariance kernel of the process 
  $\{\eta^{(\ell)}_j\}_{j \in \mathbb{Z}}$ ($\ell=1,2$),
which exists under Assumption \ref{ass_2nd}, and by 
  \begin{align} 
  \label{hd5}
\bar{K}((s_1, t_1), (s_2, t_2)) :=  \tfrac{1}{\vartheta_0} K_1 ((s_1, t_1) , (s_2, t_2))) + \tfrac{1}{1-\vartheta_0} K_2 ((s_1, t_1) , (s_2, t_2)).      
    \end{align} 
a scaled  convex combination of these kernels.

\begin{thm}\label{thm:cp_spatiotemporal_mean}
{\it  If Assumption \ref{ass_2nd} is satisfied and $\| \delta \| > 0$, then
 \begin{align}
 \label{hd6a}
\sqrt{n} \big  \{ \|D_n (\lambda, \vartheta_0) \|^2 - \lambda^2 \norm{\delta}^2 
\big  \}_{\lambda \in \sqbr{0,1}} \convproc \tau_{\delta, \vartheta_0} & \cbr{\lambda \mathbb{B}(\lambda)}_{\lambda \in \sqbr{0,1}}, \\   
 \sqrt{n} \big \{ \| D_n (\lambda, \hat{\vartheta}_n)\|^2  - \lambda^2 \norm{\delta}^2  \big \}_{\lambda \in \sqbr{0,1}} \convproc \tau_{\delta, \vartheta_0} & \cbr{\lambda \mathbb{B}(\lambda)}_{\lambda \in \sqbr{0,1}}
 \label{hd6b}
 \end{align}
 as $n \to \infty$, where 
  $\convproc$ denotes weak convergence in $\ell^\infty (\sqbr{0,1})$, 
 $\cbr{ \mathbb{B}(\lambda)}_{\lambda \in \sqbr{0,1}}$ denotes a standard Brownian motion, 
    \begin{align}
    \label{hol1}
        \tau_{\delta, \vartheta_0}^2 := 4 \sum_{s_1, s_2 \in S} \int \negthickspace \int \delta(s_1, t_1) \delta(s_2, t_2) \bar  K ((s_1, t_1), (s_2, t_2)))  \mathrm{d}t_1 \mathrm{d}t_2,
    \end{align}
   and $\bar K$ is defined in \eqref{hd5}.
   }
\end{thm}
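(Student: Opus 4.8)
The plan is to reduce the assertion to a one-dimensional invariance principle for the two partial-sum processes built from the pre- and post-change errors. For the true change point the key simplification is that in $D_n(\lambda,\vartheta_0)$ the first average runs over the indices $1,\dots,\gbr{\lambda\gbr{n\vartheta_0}}$, which are all at most $\gbr{n\vartheta_0}$ and hence pre-change, while the second runs over indices strictly larger than $\gbr{n\vartheta_0}$, which are all post-change, so that no observation is misclassified. Inserting $X_j=\mu+\eta_j^{(1)}$ for $j\le n\vartheta_0$ and $X_j=\mu+\delta+\eta_j^{(2)}$ for $j>n\vartheta_0$ and collecting the deterministic terms gives
\[
  D_n(\lambda,\vartheta_0)=-\lambda\delta+\rho_n(\lambda)+R_n(\lambda),
\]
where $\sup_\lambda\norm{\rho_n(\lambda)}=O(n^{-1})$ --- the coefficients of $\mu$ and of $\delta$ differ from $\lambda$ only through rounding --- and
\[
  R_n(\lambda)=\frac{1}{\gbr{n\vartheta_0}}\sum_{j=1}^{\gbr{\lambda\gbr{n\vartheta_0}}}\eta_j^{(1)}-\frac{1}{n-\gbr{n\vartheta_0}}\sum_{j=\gbr{n\vartheta_0}+1}^{\gbr{n\vartheta_0}+\gbr{\lambda(n-\gbr{n\vartheta_0})}}\eta_j^{(2)}.
\]

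Expanding the squared norm,
\[
  \sqrt{n}\,\bigl(\norm{D_n(\lambda,\vartheta_0)}^2-\lambda^2\norm{\delta}^2\bigr)=-2\lambda\sqrt{n}\,\innpr{\delta,R_n(\lambda)}-2\lambda\sqrt{n}\,\innpr{\delta,\rho_n(\lambda)}+\sqrt{n}\,\norm{\rho_n(\lambda)+R_n(\lambda)}^2,
\]
I would show that the last two terms vanish uniformly in $\lambda\in[0,1]$: for $\rho_n$ this is immediate, and for $R_n$ a maximal inequality for Hilbert-space-valued Bernoulli shifts (valid under Assumption \ref{ass_2nd}) gives $\sup_\lambda\norm{R_n(\lambda)}=O_\p(n^{-1/2})$, whence $\sup_\lambda\sqrt{n}\,\norm{R_n(\lambda)}^2=O_\p(n^{-1/2})$. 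It then remains to handle $-2\lambda\sqrt{n}\,\innpr{\delta,R_n(\lambda)}$. Passing to the real-valued sequences $\xi_j^{(\ell)}:=\innpr{\delta,\eta_j^{(\ell)}}$, which are again Bernoulli shifts and inherit (A3) and (A4) by the Cauchy--Schwarz inequality, the invariance principle for such sequences yields
\[
  \Bigl\{\tfrac{1}{\sqrt{\gbr{n\vartheta_0}}}\textstyle\sum_{j=1}^{\gbr{\lambda\gbr{n\vartheta_0}}}\xi_j^{(1)}\Bigr\}_\lambda\convproc\sigma_1\{\mathbb{B}_1(\lambda)\},\qquad\Bigl\{\tfrac{1}{\sqrt{n-\gbr{n\vartheta_0}}}\textstyle\sum_{j=\gbr{n\vartheta_0}+1}^{\gbr{n\vartheta_0}+\gbr{\lambda(n-\gbr{n\vartheta_0})}}\xi_j^{(2)}\Bigr\}_\lambda\convproc\sigma_2\{\mathbb{B}_2(\lambda)\}
\]
in $\ell^\infty([0,1])$, where $\sigma_\ell^2=\sum_{h\in\mathbb{Z}}\cov(\xi_0^{(\ell)},\xi_h^{(\ell)})=\sum_{s_1,s_2\in S}\iint\delta(s_1,t_1)\delta(s_2,t_2)K_\ell((s_1,t_1),(s_2,t_2))\,\mathrm{d}t_1\,\mathrm{d}t_2$, the interchange of the series with the spatial sum and the integral being justified by the absolute summability that follows from (A4).

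Because the two sums run over the adjacent disjoint blocks $\{1,\dots,\gbr{n\vartheta_0}\}$ and $\{\gbr{n\vartheta_0}+1,\dots,n\}$, the cross-covariance of the two partial-sum processes is bounded by $n^{-1}\sum_{h\ge1}\min(h,n)\,|\cov(\xi_0^{(1)},\xi_h^{(2)})|\to0$; hence, after the standard truncation to the $m$-dependent sequences of (A4) (letting $n\to\infty$ first and $m\to\infty$ afterwards), the limits $\mathbb{B}_1$ and $\mathbb{B}_2$ are independent. Multiplying the two displayed processes by $\sqrt{n/\gbr{n\vartheta_0}}\to\vartheta_0^{-1/2}$ and $\sqrt{n/(n-\gbr{n\vartheta_0})}\to(1-\vartheta_0)^{-1/2}$ respectively shows that $-2\lambda\sqrt{n}\,\innpr{\delta,R_n(\lambda)}$ converges weakly in $\ell^\infty([0,1])$ to $-2\lambda\bigl(\tfrac{\sigma_1}{\sqrt{\vartheta_0}}\mathbb{B}_1(\lambda)-\tfrac{\sigma_2}{\sqrt{1-\vartheta_0}}\mathbb{B}_2(\lambda)\bigr)$. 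The process in parentheses is a centered Gaussian process with independent increments and covariance $\bigl(\tfrac{\sigma_1^2}{\vartheta_0}+\tfrac{\sigma_2^2}{1-\vartheta_0}\bigr)(\lambda\wedge\lambda')=\tfrac14\tau_{\delta,\vartheta_0}^2\,(\lambda\wedge\lambda')$, and therefore equals $\tfrac12\tau_{\delta,\vartheta_0}\mathbb{B}$ in distribution; since $-\mathbb{B}\overset{d}{=}\mathbb{B}$, combining this with the negligibility of the remaining terms yields \eqref{hd6a}.

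Finally, for \eqref{hd6b} I would establish $\sup_{\lambda\in[0,1]}\bigl|\sqrt{n}\,(\norm{D_n(\lambda,\hat\vartheta_n)}^2-\norm{D_n(\lambda,\vartheta_0)}^2)\bigr|=o_\p(1)$ and then transfer the limit by Slutsky's lemma. Writing the increment as $\innpr{D_n(\lambda,\hat\vartheta_n)-D_n(\lambda,\vartheta_0),\,D_n(\lambda,\hat\vartheta_n)+D_n(\lambda,\vartheta_0)}$ and using that $\sup_\lambda\norm{D_n(\lambda,\vartheta)}=O_\p(1)$ uniformly for $\vartheta$ in a neighbourhood of $\vartheta_0$, it is enough to bound $\sqrt{n}\sup_\lambda\norm{D_n(\lambda,\hat\vartheta_n)-D_n(\lambda,\vartheta_0)}$: moving $\vartheta$ by $\hat\vartheta_n-\vartheta_0$ reassigns at most $O(n|\hat\vartheta_n-\vartheta_0|)$ summands in each block and perturbs the normalisers by $O(|\hat\vartheta_n-\vartheta_0|/n)$, so the deterministic part of this difference is $O_\p(|\hat\vartheta_n-\vartheta_0|)$ and the stochastic part --- a partial sum over blocks of length $O_\p(n|\hat\vartheta_n-\vartheta_0|)$, divided by $n$ --- is $O_\p\bigl(n^{-1}(n|\hat\vartheta_n-\vartheta_0|)^{1/2}\bigr)$ by the same maximal inequality; both are $o_\p(n^{-1/2})$ by the super-efficient rate \eqref{prop:changepoint_conv_rate}. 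I expect this passage from $\vartheta_0$ to $\hat\vartheta_n$ to be the main obstacle, since the bound must be made uniform in $\lambda$ and over the random range swept out by $\hat\vartheta_n$; the cleanest route is to first prove stochastic equicontinuity of $(\lambda,\vartheta)\mapsto\sqrt{n}(\norm{D_n(\lambda,\vartheta)}^2-\lambda^2\norm{\delta}^2)$ in $\vartheta$, uniformly in $\lambda$, on $[0,1]\times[\varepsilon,1-\varepsilon]$ --- which is exactly where \eqref{prop:changepoint_conv_rate} and (A4) enter --- and then to conclude by a random-time-change argument together with $\hat\vartheta_n\convp\vartheta_0$.
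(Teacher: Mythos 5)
Your proposal is correct and follows essentially the same route as the paper: subtract $\lambda\delta$, expand the squared norm, argue that the quadratic remainder $\sqrt{n}\|\tilde D_n(\lambda,\vartheta_0)\|^2$ is uniformly negligible, reduce the cross term to a pair of real-valued partial-sum processes in $\innpr{\delta,\eta_j^{(\ell)}}$, invoke a joint invariance principle (the paper's Lemma \ref{general_lemma_b1} with $\xi_1=\xi_2\equiv 0$, $\zeta_1=\zeta_2=\delta$), identify the covariance, and for \eqref{hd6b} transfer the limit using the super-efficient rate \eqref{prop:changepoint_conv_rate}. The only cosmetic difference is in how independence of the two limiting Brownian motions is established: you argue directly via asymptotic uncorrelatedness of the partial sums over disjoint blocks after $m$-dependent truncation, whereas the paper derives joint weak convergence of $(Z_n^{(1)},Z_n^{(2)})$ to a bivariate Brownian motion with (possibly nonzero) cross-covariance $\Sigma_{12}$ and then observes that the specific increments used --- $\mathbb{Z}^{(1)}$ evaluated on $[0,\vartheta_0]$ and the $\mathbb{Z}^{(2)}$-increment over $[\vartheta_0,1]$ --- are automatically uncorrelated; both arguments are sound. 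Likewise, your flagged concern about making the passage from $\vartheta_0$ to $\hat\vartheta_n$ uniform in $\lambda$ and over the random range of $\hat\vartheta_n$ is exactly the content of the paper's Lemma \ref{lem:approx_for_cp_in_summation}(2), which is proved via the Gaussian coupling of Theorem \ref{generalberkes} and the modulus-of-continuity bound \eqref{disp:sup_gamma_cont} --- i.e.\ the ``stochastic equicontinuity'' route you describe as cleanest is precisely what the paper implements.
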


\noindent This result leads to a very simple and pivotal test for the relevant hypotheses in \eqref{hd2}. To be precise 
 we define
\begin{align} \nonumber 
    \hat{\mathbb{D}}_n &:= \|D_n (1, \hat{\vartheta}_n)\|^2,\\
  \label{det1a}
    \hat{\mathbb{V}}_n &:= \br{\int \br{\|D_n (\lambda, \hat{\vartheta}_n)\|^2 - \lambda^2 \|D_n (1, \hat{\vartheta}_n)\|^2}^2 \mathrm{d} \nu (\lambda) }^{1/2},
\end{align}
where $\hat{\vartheta}_n$ is defined as in (\ref{def_changepoint_estimator}) and $\nu$ a probability measure on the interval  $(0,1)$. We propose to reject  the null hypothesis in \eqref{hd2}, whenever
\begin{align} \label{hd7}
    \hat{\mathbb{D}}_n > \Delta + q_{1-\alpha} (\mathbb{W}) \hat{\mathbb{V}}_n,
\end{align}
where $q_{1-\alpha} (\mathbb{W})$ is the $(1-\alpha)$-quantile of the random variable
\begin{align}\label{disp:random_variable_w}
    \mathbb{W} := \frac{\mathbb{B} (1)}{\br{\int \lambda^2 (\mathbb{B}(\lambda) - \lambda \mathbb{B} (1) )^2 \mathrm{d}\nu (\lambda) }^{1/2}}.
\end{align}

The following result shows 
that the decision rule \eqref{hd7} defines a
  consistent and  asymptotic level $\alpha$ test for the hypotheses \eqref{hd2a}.

\begin{thm}\label{thm:test_asymptotics}
  {\it   If Assumption \ref{ass_2nd} is satisfied,
  $\alpha \leq 0.5$, $\Delta > 0$ and $\vartheta_0 \in \br{\varepsilon, 1- \varepsilon}$, we have 
    \begin{align*}
        \lim_{n \to \infty} \P{\hat{\mathbb{D}}_n > \Delta + q_{1-\alpha} (\mathbb{W}) \hat{\mathbb{V}}_n} = \begin{cases}
            0,& \text{if } \norm{\delta}^2 < \Delta,\\
            \alpha,& \text{if } \norm{\delta}^2 = \Delta  \text{ and } \tau_{\delta, \vartheta_0}^2> 0 ,\\
            1,& \text{if } \norm{\delta}^2 > \Delta.
        \end{cases}
    \end{align*}
    }
\end{thm}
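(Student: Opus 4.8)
The plan is to derive Theorem~\ref{thm:test_asymptotics} from the fully functional invariance principle \eqref{hd6b}. I would first treat the main case $\|\delta\|>0$ and put, for $\lambda\in[0,1]$,
\[
Z_n(\lambda):=\sqrt{n}\,\bigl(\|D_n(\lambda,\hat{\vartheta}_n)\|^2-\lambda^2\|\delta\|^2\bigr),
\]
so that $Z_n\convproc\{\tau_{\delta,\vartheta_0}\lambda\mathbb{B}(\lambda)\}_{\lambda\in[0,1]}$ in $\ell^\infty([0,1])$. Since $\|D_n(\lambda,\hat{\vartheta}_n)\|^2-\lambda^2\|D_n(1,\hat{\vartheta}_n)\|^2=n^{-1/2}\bigl(Z_n(\lambda)-\lambda^2Z_n(1)\bigr)$, one has $\sqrt{n}(\hat{\mathbb{D}}_n-\|\delta\|^2)=Z_n(1)$ and $\sqrt{n}\,\hat{\mathbb{V}}_n=\bigl(\int(Z_n(\lambda)-\lambda^2Z_n(1))^2\,\mathrm{d}\nu(\lambda)\bigr)^{1/2}$. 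The map $x\mapsto\bigl(x(1),\,(\int(x(\lambda)-\lambda^2x(1))^2\,\mathrm{d}\nu(\lambda))^{1/2}\bigr)$ is continuous from $\ell^\infty([0,1])$ to $\mathbb{R}^2$, so by the continuous mapping theorem and the identity $\tau_{\delta,\vartheta_0}\lambda\mathbb{B}(\lambda)-\lambda^2\tau_{\delta,\vartheta_0}\mathbb{B}(1)=\tau_{\delta,\vartheta_0}\lambda(\mathbb{B}(\lambda)-\lambda\mathbb{B}(1))$,
\[
\bigl(\sqrt{n}(\hat{\mathbb{D}}_n-\|\delta\|^2),\,\sqrt{n}\,\hat{\mathbb{V}}_n\bigr)\convd\bigl(\tau_{\delta,\vartheta_0}\mathbb{B}(1),\,\tau_{\delta,\vartheta_0}\,\mathbb{V}_\infty\bigr),\qquad\mathbb{V}_\infty:=\Bigl(\int\lambda^2(\mathbb{B}(\lambda)-\lambda\mathbb{B}(1))^2\,\mathrm{d}\nu(\lambda)\Bigr)^{1/2},
\]
where $\mathbb{W}=\mathbb{B}(1)/\mathbb{V}_\infty$. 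The point of self-normalization is that the nuisance scale $\tau_{\delta,\vartheta_0}$ will cancel in the limit of the ratio.

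Next I would record the elementary facts about $\mathbb{W}$ that are needed. The reflection $\mathbb{B}\mapsto-\mathbb{B}$ preserves the law of Brownian motion, leaves $\mathbb{V}_\infty$ unchanged and flips $\mathbb{B}(1)$, so $\mathbb{W}$ is symmetric about $0$; with $\alpha\le 1/2$ this gives $q_{1-\alpha}(\mathbb{W})\ge 0$. Moreover $\mathbb{V}_\infty>0$ almost surely, since $\mathrm{P}(\mathbb{B}(\lambda)=\lambda\mathbb{B}(1))=0$ for each fixed $\lambda\in(0,1)$, whence by Fubini the set $\{\lambda:\mathbb{B}(\lambda)=\lambda\mathbb{B}(1)\}$ is $\nu$-null a.s. Finally, the Brownian bridge $\{\mathbb{B}(\lambda)-\lambda\mathbb{B}(1)\}_{\lambda\in[0,1]}$ is independent of $\mathbb{B}(1)$, so conditionally on the bridge $\mathbb{W}$ is a nondegenerate centered Gaussian; hence $\mathbb{W}$ has a continuous distribution, $q_{1-\alpha}(\mathbb{W})$ is finite, $\mathrm{P}(\mathbb{W}>q_{1-\alpha}(\mathbb{W}))=\alpha$ and $\mathrm{P}(\mathbb{W}=q_{1-\alpha}(\mathbb{W}))=0$.

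It remains to treat the three cases, writing the rejection event as $\{\,\sqrt{n}(\hat{\mathbb{D}}_n-\|\delta\|^2)-q_{1-\alpha}(\mathbb{W})\sqrt{n}\,\hat{\mathbb{V}}_n>\sqrt{n}(\Delta-\|\delta\|^2)\,\}$, whose left-hand side is $O_\p(1)$ by the first paragraph. If $\|\delta\|^2<\Delta$ and $\|\delta\|>0$, the right-hand side tends to $+\infty$, so the probability tends to $0$; the remaining subcase $\delta=0$ (outside the scope of Theorem~\ref{thm:cp_spatiotemporal_mean}) is simpler, since then the means cancel and $D_n(1,\vartheta)$ is a difference of averages of the centered stationary errors, so $\sup_{\vartheta\in[\varepsilon,1-\varepsilon]}\|D_n(1,\vartheta)\|^2=O_\p(n^{-1})$ and therefore $\mathrm{P}(\hat{\mathbb{D}}_n>\Delta+q_{1-\alpha}(\mathbb{W})\hat{\mathbb{V}}_n)\le\mathrm{P}(\hat{\mathbb{D}}_n>\Delta)\to 0$, using $q_{1-\alpha}(\mathbb{W})\hat{\mathbb{V}}_n\ge 0$. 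If $\|\delta\|^2>\Delta$, then $\hat{\mathbb{D}}_n\convp\|\delta\|^2$ and $\hat{\mathbb{V}}_n\convp 0$, so $\hat{\mathbb{D}}_n-\Delta-q_{1-\alpha}(\mathbb{W})\hat{\mathbb{V}}_n\convp\|\delta\|^2-\Delta>0$ and the probability tends to $1$. If $\|\delta\|^2=\Delta$ and $\tau_{\delta,\vartheta_0}^2>0$, then $\sqrt{n}(\Delta-\|\delta\|^2)=0$, the rejection event equals $\{Z_n(1)-q_{1-\alpha}(\mathbb{W})\sqrt{n}\,\hat{\mathbb{V}}_n>0\}$, and applying the continuous mapping theorem to $(a,b)\mapsto a-q_{1-\alpha}(\mathbb{W})b$ gives
\[
Z_n(1)-q_{1-\alpha}(\mathbb{W})\sqrt{n}\,\hat{\mathbb{V}}_n\convd\tau_{\delta,\vartheta_0}\bigl(\mathbb{B}(1)-q_{1-\alpha}(\mathbb{W})\mathbb{V}_\infty\bigr)=\tau_{\delta,\vartheta_0}\,\mathbb{V}_\infty\bigl(\mathbb{W}-q_{1-\alpha}(\mathbb{W})\bigr).
\]
Since $\tau_{\delta,\vartheta_0}>0$ and $\mathbb{V}_\infty>0$ a.s., this limit is positive iff $\mathbb{W}>q_{1-\alpha}(\mathbb{W})$ and vanishes with probability $0$, so the rejection probability converges to $\mathrm{P}(\mathbb{W}>q_{1-\alpha}(\mathbb{W}))=\alpha$.

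The bulk of the analytic work is carried by Theorem~\ref{thm:cp_spatiotemporal_mean}; the one point demanding care is the boundary case $\|\delta\|^2=\Delta$, where the weak limit of the self-normalized statistic is $\tau_{\delta,\vartheta_0}\mathbb{V}_\infty(\mathbb{W}-q_{1-\alpha}(\mathbb{W}))$ and one must know both that $\mathbb{W}$ charges $q_{1-\alpha}(\mathbb{W})$ with probability zero (an intrinsic, $\tau_{\delta,\vartheta_0}$-free property, handled in the second paragraph) and that this limit is not identically zero — which is precisely the content of the extra hypothesis $\tau_{\delta,\vartheta_0}^2>0$, since if $\tau_{\delta,\vartheta_0}=0$ the limit degenerates to $0$ and the rejection probability would tend to $0$ rather than to $\alpha$.
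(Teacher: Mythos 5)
Your proof is correct and follows essentially the same route as the paper: establish joint weak convergence of $\sqrt{n}\,(\hat{\mathbb{D}}_n-\|\delta\|^2,\,\hat{\mathbb{V}}_n)$ from Theorem~\ref{thm:cp_spatiotemporal_mean} via the continuous mapping theorem, use $q_{1-\alpha}(\mathbb{W})\ge 0$ (symmetry of $\mathbb{W}$) and the uniform bound on $D_n(1,\cdot)$ for the $\delta=0$ case, and resolve the three regimes by comparing rates. The only cosmetic difference is that in the boundary case $\|\delta\|^2=\Delta$ you apply the continuous mapping theorem to $(a,b)\mapsto a-q_{1-\alpha}(\mathbb{W})b$ rather than to the ratio $a/b$ as the paper does, and you spell out two facts the paper uses implicitly — the almost-sure positivity of $\mathbb{V}_\infty$ and the continuity of the law of $\mathbb{W}$ — which are indeed needed to justify those applications.
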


\begin{rem} \label{remsym}
The distribution of the random variable  $\mathbb{W}$ in \eqref{disp:random_variable_w} 
is symmetric.
To see this, note that the numerator and denominator of $\mathbb{W}$ are independent. This follows  using the $L^2$ representation $\mathbb{B}(t) = t Z_0 + \sum_{k\geq 1} Z_k \frac{\sin (\pi k t)}{\pi k}$ of the Brownian motion 
and comparing $\mathbb{B}(1)$ and $\mathbb{B}(t) - t \mathbb{B}(1)$. Lastly, since $\mathbb{B}(1)$ is symmetric, the claim follows, because 
$$
\Big (-\mathbb{B}(1), (\int \lambda^2 (\mathbb{B}(\lambda) - \lambda \mathbb{B}(1))^2 \mathrm{d} \nu (\lambda))^{-1/2} \Big) \eqo{$\mathpcal{D}$} \Big (\mathbb{B}(1), (\int \lambda^2 (\mathbb{B}(\lambda) - \lambda \mathbb{B}(1))^2 \mathrm{d} \nu (\lambda))^{-1/2}
\Big ).
$$
 \end{rem}

\begin{rem} \label{remarkh0} ~~~
\begin{itemize}
    \item[(1)]
    Note that the test \eqref{hd7} depends on the specification of the measure $\nu$
   on the interval $[0,1]$, which has to be chosen in advance by the data analyst. However, in numerical experiments it turned 
   out that this dependence does not have a significant influence on the rejection probabilities, if the support of the measures has some distance to the boundaries $0$ and $1$ of this interval
(see Section \ref{sec5} for some results). 
A heuristic explanation for this observation 
consists in the fact  that the measure $\nu$
 appears in the definition 
of the statistic $\hat{\mathbb{V}}_n$ 
in \eqref{det1a} and in the quantiles of the random variable
$\mathbb {W}$ in \eqref{disp:random_variable_w}. Thus, intuitively, there is a cancellation effect in the decision rule \eqref{hd7}. 
  \item[(2)]  
  An important problem in applications is 
  the choice of the threshold $\Delta$, which is 
  problem specific. For this choice a careful discussion with experts from the field of application is recommended to understand in which difference they are really interested. 
  Moreover, there are also several alternatives, if this choice is difficult after these discussions. 
  \begin{itemize}
      \item[(a)]  It follows from the proof of Theorem \ref{thm:cp_spatiotemporal_mean} that 
      $$
\frac{ \hat{\mathbb{D}}_n - \| \delta \|^2}{
    \hat{\mathbb{V}}_n}
    \convd \mathbb{W}~.
      $$
      Consequently,
      an   (asymptotic) $(1-\alpha)$ confidence interval for the squared 
 norm  $\|\delta \|^2 \geq 0  $ of the difference of the mean functions before and after the change point is given by
\begin{equation}
\label{one}
\big [0,  \hat{\mathbb{D}}_n
+q_{1-\alpha}(\mathbb{W} )  \hat{ \mathbb{V}}_n \big ].
\end{equation} 
Similarly, if it can be ruled out that 
the squared  norm  vanishes, a two sided interval  for 
 $\|\delta \|^2 >0  $
 is given by 
 \begin{equation} 
 \label{two}
 \big ( \max \{0, \hat{\mathbb{D}}_n - q_{1-\alpha/2}(\mathbb{W})\hat{ \mathbb{V}}_n \},
 \hat{\mathbb{D}}_n
+q_{1-\alpha/2}(\mathbb{W} )  \hat{ \mathbb{V}}_n \big ].
\end{equation} 
 \item[(b)] It is also possible to test 
 the relevant hypotheses in \eqref{hd2}
 for a finite number of thresholds $\Delta^{(1)} < \ldots < \Delta^{(L)}$ simultaneously. 
 In particular, acceptance of  the null hypothesis
 with the threshold  $ \Delta^{(L_0)} $
 implies also acceptance for all 
  thresholds $ \Delta^{(L_0+1)} , \ldots  , \Delta^{(L)} $.
  Correspondingly, rejection for a $ \Delta^{(L_0)} $  means rejection for all smaller thresholds. In this sense, evaluating the test for several thresholds is  logically consistent for the user, and it is possible to determine for fixed level $\alpha$   the largest threshold such that the null hypotheses is rejected.
  \end{itemize}

\end{itemize}
\end{rem}

\section{Functional principal component analysis} 
\label{sec4} 

  \def\theequation{4.\arabic{equation}}
\setcounter{equation}{0}

In this section, we address the 
problem of detecting relevant changes in the mean of a stationary functional time series  by estimating  scores. For functional data this approach has been successfully used  by several authors in the context of testing ``classical''  hypotheses in the one-, two-sample and change point problem
 \citep[see][among  others]{benkohaerdle2009,berkes2009,zhangshao2015}, and it has been generalized to spatiotemporal data by \cite{gromenko}.
 To the best knowledge of the authors tests for relevant hypotheses have not been constructed by this approach.
 
To be precise, consider  model \eqref{hd1} and note that in this scenario, it is possible that the covariance function also changes at the point $\lfloor n \vartheta_0 \rfloor$ point.
Therefore, we denote by
$c^{(1)}$ and $c^{(2)}$ 
the covariance kernels corresponding to the samples
$ X_1, \ldots  , X_{\lfloor n \vartheta_0 \rfloor}$ and $X_{\lfloor n \vartheta_0 \rfloor +1 }, \ldots  , X_n $ before and after the change point, respectively, 
and define 
\begin{equation}
    \label{hd8a}
c_{\vartheta_0} := \vartheta_0 c^{(1)} + (1-\vartheta_0) c^{(2)} 
\end{equation}
as a convex combination of 
these two kernels.  We denote by  
$\tau_1 \geq \tau_2 \geq \ldots $ the 
ordered eigenvalues of the operator having covariance kernel $c_{\vartheta_0}$ with corresponding orthonormal eigenfunctions $w_1, w_2, \ldots $ in $L^2(S \times [0,1])$.
For a fixed integer $d\in \mathbb{N}$ we are interested in testing the (relevant) hypotheses
\begin{equation}
    \label{hd9}
    H_0: \sum_{k=1}^d \innpr{\delta, w_k}^2 \leq \Delta~~{\rm versus ~~}  H_1: \sum_{k=1}^d \innpr{\delta, w_k}^2 > \Delta~,
\end{equation}
where $\Delta >0 $ is a predefined threshold. Note that  by Parseval's identity
$\|\delta \|^2  = \sum_{k=1}^\infty \innpr{\delta, w_k}^2$, and therefore - similar as for testing classical hypotheses - a test for  the hypotheses \eqref{hd9} can also be used for the hypotheses \eqref{hd2}. We refer to Remark \ref{rem1} for a more detailed discussion of this approach in the context of  testing relevant hypotheses. For the  statements in this section we require the 
following assumptions.

\begin{ass}\label{ass_4th}
The process 
    $\cbr{X_j}_{j \in \mathbb{Z}}$ in 
    model \eqref{hd1} satisfies conditions (A1) and (A2) of Assumption \ref{ass_2nd}. Furthermore, $\cbr{X_j}_{j \in \mathbb{Z}}$ satisfies
    \begin{enumerate}
        \item[(A3')] There exists a constant  $\psi \in \br{0,1}$ such that $\e \norm{\eta_j^{(\ell)}}^{4 + \psi} < \infty$ ~~~~ $(\ell = 1,2)$.
        
        \item[(A4')] 
        The sequences $\{\eta_j^{(1 )}\}_{j \in \mathbb{Z}}$  and $\{\eta_j^{(2 )}\}_{j \in \mathbb{Z}}$ can be approximated by $m$-dependent sequences $\{\eta_{j,m}^{(1 )}\}_{j \in \mathbb{Z}}$  and $\{\eta_{j,m}^{(2 )}\}_{j \in \mathbb{Z}}$, respectively, 
         in the sense that for some $\kappa > 4 + \psi$ $$\sum_{m=1}^\infty \br{\e \norm{\eta_0^{(\ell )} - \eta_{0,m}^{(\ell )} }^{4 + \psi} }^{1/\kappa} < \infty ~~~~ (\ell =1,2),$$
        where $\eta_{j,m}^{(\ell )}$ is defined \eqref{hd10}.
    \end{enumerate}
\end{ass}

\begin{ass}\label{eigvals_order}
The (ordered) eigenvalues of the covariance operator  $c_{\vartheta_0}$ in \eqref{hd8a} satisfy 
    $\tau_1 > \cdots > \tau_d > \tau_{d+1} > 0$, where  $d \in \mathbb{N}$ is the number of scores considered in \eqref{hd9}.
\end{ass}

Recall the definition of the   estimator for the change point in \eqref{def_changepoint_estimator}
and for  $f,g \in L^2(S \times [0,1])$, we define the function $f \otimes g \in L^2((S \times [0,1])^2)$ as $(f \otimes g) ((s_1, t_1), (s_2, t_2)) := f(s_1, t_1) g(s_2, t_2)$. 
We consider 
\begin{align} \label{hd11}
    \hat{c}_{\hat{\vartheta}_n, \lambda} := \hat{\vartheta}_n \hat{c}^{(1)}_\lambda + (1- \hat{\vartheta}_n) \hat{c}^{(2)}_\lambda,
\end{align}
as a sequential estimator of the  convex combination \eqref{hd8a},
where
\begin{align*}
    \hat{c}^{(1)}_\lambda &:= \frac{1}{\gbr{\lambda \gbr{n \hat{\vartheta}_n}}} \sum_{i=1}^{\gbr{\lambda \gbr{n \hat{\vartheta}_n}}} \br{X_i - \frac{1}{\gbr{\lambda \gbr{n \hat{\vartheta}_n}}} \sum_{j=1}^{\gbr{\lambda \gbr{n \hat{\vartheta}_n}}} X_j} \otimes \br{X_i - \frac{1}{\gbr{\lambda \gbr{n \hat{\vartheta}_n}}} \sum_{j=1}^{\gbr{\lambda \gbr{n \hat{\vartheta}_n}}} X_j},\\
    \hat{c}^{(2)}_\lambda &:= \frac{1}{\gbr{\lambda (n-\gbr{n \hat{\vartheta}_n})}} \sum_{i=\gbr{n \hat{\vartheta}_n}+1}^{\gbr{n \hat{\vartheta}_n} + \gbr{\lambda (n-\gbr{n \hat{\vartheta}_n})}} \br{X_i - \frac{1}{\gbr{\lambda (n-\gbr{n \hat{\vartheta}_n})}} \sum_{j=\gbr{n \hat{\vartheta}_n}+1}^{\gbr{n \hat{\vartheta}_n} + \gbr{\lambda (n-\gbr{n \hat{\vartheta}_n})}} X_j} \\
    &\phantom{asdfasdfasdasdadsasassaadfaadsf} \otimes \br{X_i - \frac{1}{\gbr{\lambda (n-\gbr{n \hat{\vartheta}_n})}} \sum_{j=\gbr{n \hat{\vartheta}_n}+1}^{\gbr{n \hat{\vartheta}_n} + \gbr{\lambda (n-\gbr{n \hat{\vartheta}_n})}} X_j}
\end{align*}
% \HD{\bf wir müssen das Symbol $\otimes$ erklären!}
are estimates of the covariance functions $c^{(1)}$ and $c^{(2)}$ before and after the change point, respectively. 
For $i=1,2$ we put $\hat{c}^{(i)}_\lambda := 0$, if $\gbr{\lambda \gbr{n \hat \vartheta_n}} = 0$. The first result of this section  shows that the statistic  \eqref{hd11} is a 
uniformly consistent  estimator for the convex combination in \eqref{hd8a}.
\begin{thm}
\label{thm7}
{\it 
   If Assumption \ref{ass_4th} is satisfied and $\vartheta_0 \in (\varepsilon, 1- \varepsilon)$, we have
    \begin{align*}
        \sup_{0 \leq \lambda \leq 1} \sqrt{\lambda} \norm{\hat{c}_{\hat{\vartheta}_n, \lambda} - c_{\vartheta_0}}_2 = O_\p \br{\frac{\log^{2/\kappa} (n)}{\sqrt{n}}},
    \end{align*}
    where $\norm{\cdot}_2$ denotes the norm induced by  
     the inner product 
\begin{align}
\label{det1}
    \innpr{f,g}_2 := \sum_{s_1,s_2 \in S} \int \negthickspace \int f((s_1, t_1), (s_2, t_2)) g((s_1, t_1), (s_2, t_2)) \mathrm{d}t_1 \mathrm{d}t_2
   % ~~,f,g \in L^2 ((S \times \sqbr{0,1})^2),
\end{align}
 on  $L^2((S \times [0,1])^2)$. 
 }
\end{thm}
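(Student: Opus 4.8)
The plan is to reduce the assertion to a H\'aj\'ek--R\'enyi-type maximal inequality for the partial sums of the outer-product process $\{\eta_i^{(\ell)}\otimes\eta_i^{(\ell)}\}$. Write $m:=\gbr{n\vartheta_0}$ and, for a generic $\vartheta\in(\varepsilon,1-\varepsilon)$, let $\hat c^{(\ell)}_{\vartheta,\lambda}$ denote the estimator $\hat c^{(\ell)}_\lambda$ in \eqref{hd11} with $\hat\vartheta_n$ replaced by $\vartheta$. Starting from
\begin{align*}
\hat c_{\hat\vartheta_n,\lambda}-c_{\vartheta_0}
=\hat\vartheta_n\br{\hat c^{(1)}_{\hat\vartheta_n,\lambda}-c^{(1)}}
+\br{1-\hat\vartheta_n}\br{\hat c^{(2)}_{\hat\vartheta_n,\lambda}-c^{(2)}}
+\br{\hat\vartheta_n-\vartheta_0}\br{c^{(1)}-c^{(2)}},
\end{align*}
the last term has $\norm{\cdot}_2$-norm $\norm{c^{(1)}-c^{(2)}}_2\,|\hat\vartheta_n-\vartheta_0|=o_\p(n^{-1/2})$ uniformly in $\lambda$ by \eqref{prop:changepoint_conv_rate}, while the prefactors $\hat\vartheta_n,\,1-\hat\vartheta_n$ are $O_\p(1)$ and free of $\lambda$ since $\hat\vartheta_n\convp\vartheta_0\in(\varepsilon,1-\varepsilon)$. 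So it remains to prove, for $\ell=1,2$,
\begin{align}\label{sk1}
\sup_{0\le\lambda\le1}\sqrt\lambda\,\norm{\hat c^{(\ell)}_{\hat\vartheta_n,\lambda}-c^{(\ell)}}_2=O_\p\br{\tfrac{\log^{2/\kappa}n}{\sqrt n}}.
\end{align}

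Next I would replace the data-driven summation ranges by deterministic ones. Because $\gbr{n\hat\vartheta_n}=m+o_\p(\sqrt n)$ by \eqref{prop:changepoint_conv_rate}, passing from the ranges in $\hat c^{(\ell)}_{\hat\vartheta_n,\lambda}$ to those in $\hat c^{(\ell)}_{\vartheta_0,\lambda}$ alters at most $o_\p(\sqrt n)$ summands; for $\ell=1$ and $\lambda$ near $1$ this also bounds the number of post-change curves that may leak into $\hat c^{(1)}_{\hat\vartheta_n,\lambda}$, whose mean shift $\delta$ then contributes a bias of order $o_\p(\sqrt n)/N_\lambda$ with $N_\lambda$ of order $n$ there. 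Absorbing these discrepancies with the maximal inequalities below, the left-hand side of \eqref{sk1} is unchanged up to $o_\p(n^{-1/2})$ if $\hat\vartheta_n$ is replaced by $\vartheta_0$ in the indices, and I would carry out the rest of the argument for $\hat c^{(\ell)}_{\vartheta_0,\lambda}$.

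For this, fix $\ell=1$ (the case $\ell=2$ is identical) and set $N_\lambda:=\gbr{\lambda m}$. If $N_\lambda\le1$ then the centred covariance $\hat c^{(1)}_{\vartheta_0,\lambda}$ vanishes while $\lambda<2/m$, so that $\sqrt\lambda\,\norm{\hat c^{(1)}_{\vartheta_0,\lambda}-c^{(1)}}_2\le\sqrt{2/m}\,\norm{c^{(1)}}_2=O(n^{-1/2})$; for $N_\lambda\ge2$, using $X_i-\tfrac1{N_\lambda}\sum_{j\le N_\lambda}X_j=\eta_i^{(1)}-\bar\eta^{(1)}_\lambda$ with $\bar\eta^{(1)}_\lambda:=\tfrac1{N_\lambda}\sum_{j\le N_\lambda}\eta_j^{(1)}$, one gets
\begin{align*}
\hat c^{(1)}_{\vartheta_0,\lambda}-c^{(1)}
=\frac1{N_\lambda}\sum_{i=1}^{N_\lambda}\zeta_i-\bar\eta^{(1)}_\lambda\otimes\bar\eta^{(1)}_\lambda,
\qquad \zeta_i:=\eta_i^{(1)}\otimes\eta_i^{(1)}-c^{(1)} .
\end{align*}
By (A1)--(A2), $\{\zeta_i\}$ is a centred stationary Bernoulli shift in $L^2((S\times[0,1])^2)$; since $\norm{\eta\otimes\eta}_2=\norm{\eta}^2$, (A3$'$) gives $\e\norm{\zeta_0}_2^{2+\psi/2}<\infty$; and from $a\otimes a-b\otimes b=(a-b)\otimes a+b\otimes(a-b)$, Cauchy--Schwarz and (A3$'$) one checks that $\{\zeta_i\}$ admits the $m$-dependent approximation $\zeta_{0,m}:=\eta_{0,m}^{(1)}\otimes\eta_{0,m}^{(1)}-c^{(1)}$ with $\sum_m(\e\norm{\zeta_0-\zeta_{0,m}}_2^{2+\psi/2})^{2/\kappa}\le C\sum_m(\e\norm{\eta_0^{(1)}-\eta_{0,m}^{(1)}}^{4+\psi})^{1/\kappa}<\infty$ by (A4$'$), i.e.\ its approximation index is $\kappa/2$, which exceeds $2+\psi/2$ precisely because $\kappa>4+\psi$. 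A H\'aj\'ek--R\'enyi-type maximal inequality for $L^{2+\psi/2}$-$m$-approximable Hilbert-space-valued partial sums then gives
\begin{align}\label{sk2}
\sup_{1\le N\le m}\frac1{\sqrt N}\,\norm{\sum_{i=1}^N\zeta_i}_2=O_\p\br{\log^{2/\kappa}n},
\end{align}
the exponent being the reciprocal of the approximation index, and the analogous bound for $\{\eta_i^{(1)}\}$ (which by (A3$'$)--(A4$'$) has approximation index $\kappa$) reads $\sup_{N\le m}N^{-1/2}\norm{\sum_{i\le N}\eta_i^{(1)}}=O_\p(\log^{1/\kappa}n)$. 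Since $N_\lambda\ge\lambda m/2$ on $\{N_\lambda\ge2\}$, the first term above is $\tfrac{\sqrt\lambda}{\sqrt{N_\lambda}}\,O_\p(\log^{2/\kappa}n)\le\sqrt{2/m}\,O_\p(\log^{2/\kappa}n)$ uniformly in $\lambda$, while $\sqrt\lambda\,\norm{\bar\eta^{(1)}_\lambda\otimes\bar\eta^{(1)}_\lambda}_2=\sqrt\lambda\,\norm{\bar\eta^{(1)}_\lambda}^2=\tfrac{\sqrt\lambda}{N_\lambda}\big(\sup_{N\le m}N^{-1/2}\norm{\sum_{i\le N}\eta_i^{(1)}}\big)^2\le\sqrt{2/m}\,O_\p(\log^{2/\kappa}n)$; together with the $N_\lambda\le1$ case this establishes \eqref{sk1} and hence the theorem.

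I expect the maximal inequality \eqref{sk2} to be the crux. One must verify that the outer-product process $\{\eta_i^{(\ell)}\otimes\eta_i^{(\ell)}\}$ again fits the Bernoulli-shift framework of Assumption \ref{ass_4th} --- which is exactly why (A3$'$)--(A4$'$) are imposed at order $4+\psi$ and with $\kappa>4+\psi$, the loss of a factor $2$ in moment order and in approximation index being governed by $\norm{\eta\otimes\eta}_2=\norm{\eta}^2$ and Cauchy--Schwarz --- and then push a H\'aj\'ek--R\'enyi argument (dyadic blocking in $N$) through, the factor $\log^{2/\kappa}n$ being the price for the supremum over all $N\le m$ under only a polynomial $m$-approximation rate. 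By contrast, handling the data-driven index $\gbr{n\hat\vartheta_n}$ and the curves that may leak across the change point should be routine once \eqref{prop:changepoint_conv_rate} is available; the $\sqrt\lambda$-weighting in the statement is essential, as it damps the contribution of small $\lambda$, where $\hat c^{(\ell)}_{\hat\vartheta_n,\lambda}$ is built from only a handful of curves and is not consistent.
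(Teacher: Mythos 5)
Your proof is essentially correct and follows the same strategy as the paper's: reduce the claim to a maximal inequality for partial sums of the outer-product process $\eta_i^{(\ell)}\otimes\eta_i^{(\ell)}-c^{(\ell)}$, and absorb the randomness of $\hat\vartheta_n$ via \eqref{prop:changepoint_conv_rate}. Your identification of the key mechanism --- that $\|\eta\otimes\eta\|_2=\|\eta\|^2$ together with Cauchy--Schwarz halves both the moment exponent (from $4+\psi$ to $2+\psi/2$) and the $m$-approximation index (from $\kappa$ to $\kappa/2$), hence the exponent $2/\kappa$ in the log-factor and the requirement $\kappa>4+\psi$ in (A4$'$) --- is exactly what the paper appeals to when invoking ``a variation of \eqref{disp:aue_lemma_b1} for $L^2((S\times[0,1])^2)$-valued random variables.''

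The one difference in bookkeeping is where the estimated index $\gbr{n\hat\vartheta_n}$ is removed. You replace $\hat\vartheta_n$ by $\vartheta_0$ in the summation ranges once and for all and then analyze the clean estimator; the paper instead works directly with the contaminated $\hat c^{(1)}_\lambda$, splitting $\lambda$ into the sets $\Lambda$ and $\Lambda^C$ (no leakage versus leakage of post-change curves) and bounding each of the resulting cross- and bias-terms explicitly. Both yield the same rate. Two small caveats on your version: (i) your claim that the range replacement costs only $o_\p(n^{-1/2})$ uniformly in $\lambda$ is slightly optimistic --- if you estimate $\|\sum_{B+1}^A\zeta_i\|_2$ via the supremum bound $\sup_N N^{-1/2}\|\sum_{i\le N}\zeta_i\|_2=O_\p(\log^{2/\kappa}n)$ you only obtain $O_\p(\log^{2/\kappa}(n)/\sqrt n)$, not $o_\p(n^{-1/2})$, but this is harmless since it is of the same order as the main term; (ii) the leakage argument for $\lambda$ near $1$ should also account for the $\bar\eta$-centering picking up the shift $\delta$, producing a bias of order $(\ell/N_\lambda)\|\delta\otimes\delta\|_2$ with $\ell=o_\p(\sqrt n)$ leaked curves, which is indeed negligible but needs to be written out (the paper's $\Lambda^C$-split does this in \eqref{hd12a}--\eqref{hd12d}). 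Neither issue is a gap in the argument; they are just places where the bound is coarser than you state.
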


In the following discussion, we  will denote the eigenfunctions and eigenvalues of the estimator $\hat{c}_{\hat{\vartheta}_n, \lambda}$ 
 by $\hat{w}_{1,\lambda}, \hat{w}_{2, \lambda}, ...$  and  $\hat{\tau}_1, \hat{\tau}_2, ...$, respectively.  Recall the definition of  the process 
 $D_n(\lambda, \vartheta ) $ in \eqref{hd3} and note that  $D_n(1, \hat{\vartheta}_n) $
 is an estimator of difference between the mean functions before and after the change point. Consequently, a natural estimator of the quantity 
 $ \sum_{k=1}^d \innpr{\delta, w_k}^2$  in  \eqref{hd9} is given by  the statistic
\begin{equation} \label{hd17}
    \sum_{k=1}^d \innpr{D_n(1, \hat{\vartheta}_n), \hat{w}_k}^2 ~,
\end{equation}
and the null hypothesis in \eqref{hd9} will be rejected for large values of this statistic. The following result 
shows that the process 
$\sqrt{n} \{\sum_{k=1}^d \big ( \innpr{D_n(\lambda , \hat{\vartheta}_n), \hat{w}_k}^2 - \lambda^2 \innpr{\delta, w_k}^2  \big )  \}_{\lambda \in [0,1]} $ converges 
weakly, and -  as a by-product - establishes asymptotic normality 
of \eqref{hd17} (after appropriate normalization).

\begin{thm} \label{thm9}
{\it    If Assumption \ref{ass_4th} is satisfied, $\norm{\delta}^2 > 0$ and $\vartheta_0 \in (\varepsilon, 1- \varepsilon)$, then the following statements are true $n \to \infty$.
    \begin{align}
    \label{hd18a}
   &  \sqrt{n} \bigg\{\sum_{k=1}^d \br{\innpr{D_n (\lambda, \vartheta_0), \hat{w}_{k, \lambda}}^2 - \lambda^2 \innpr{\delta, w_k}^2}\bigg\}_{\lambda \in \sqbr{0,1}} \convproc \sigma_{cp} \cbr{\lambda \mathbb{B}(\lambda)}_{\lambda \in \sqbr{0,1}}, \\
& \sqrt{n} \bigg\{\sum_{k=1}^d \br{\innpr{D_n (\lambda, \hat{\vartheta}_n), \hat{w}_{k, \lambda}}^2 - \lambda^2 \innpr{\delta, w_k}^2} \bigg\}_{\lambda \in \sqbr{0,1}} \convproc \sigma_{cp} \cbr{\lambda \mathbb{B}(\lambda)}_{\lambda \in \sqbr{0,1}}, \label{hd18b}
    \end{align}
where 
%\convproc$ denotes weak convergence in $\ell^\infty (\sqbr{0,1})$ and
\begin{align*}
    \sigma_{cp}^2 := &2\sum_{i\in \mathbb{Z}} \bigg( \vartheta_0 \sum_{s_1,...,s_4 \in S} \int \cov \br{ \eta_0^{(1)} (t_1, s_1) \eta_0^{(1)} (t_2, s_2) , \eta_i^{(1)} (t_3, s_3) \eta_i^{(1)} (t_4, s_4)} \\
    & ~~~~~~~~~~~~~~~~~~~~~~~~~~~~~~~~
    ~~~~~~~~ ~~~~~~~~ \times 
    f((t_1, s_1), (t_2, s_2)) f((t_3, s_3), (t_4, s_4)) \mathrm{d} (t_1, ..., t_4) \\
    &- 2 \sum_{s_1, s_2, s_3 \in S} \int \cov \br{ \eta_0^{(1)} (t_1, s_1) \eta_0^{(1)} (t_2, s_2)  , \eta_i^{(1)} (t_3, s_3)  } f((t_1, s_1), (t_2, s_2)) w(t_3, s_3) \mathrm{d} (t_1, t_2, t_3)\\
    &+  (1- \vartheta_0) \negthickspace \negthickspace \sum_{s_1,...,s_4 \in S} \int \cov \br{ \eta_0^{(2)} (t_1, s_1) \eta_0^{(2)} (t_2, s_2) , \eta_i^{(2)} (t_3, s_3) \eta_i^{(2)} (t_4, s_4)}\\
    & ~~~~~~~~~~~~~~~~~~~~~~~~~~~~~~~~
    ~~~~~~~~ ~~~~~~~~ \times 
    f((t_1, s_1), (t_2, s_2)) f((t_3, s_3), (t_4, s_4)) \mathrm{d} (t_1, ..., t_4) \\
    &-2 \sum_{s_1, s_2, s_3 \in S} \int \cov \br{ \eta_0^{(2)} (t_1, s_1) \eta_0^{(2)} (t_2, s_2)  , \eta_i^{(2)} (t_3, s_3)  } f((t_1, s_1), (t_2, s_2)) w(t_3, s_3) \mathrm{d} (t_1, t_2, t_3) \bigg)\\
    &+ \sum_{s_1, s_2 \in S} \int \bar K ((t_1, s_1), (t_2, s_2)) w(t_1, s_1) w(t_2, s_2) \mathrm{d} (t_1, t_2)
\end{align*}
    with $\bar K$ defined in \eqref{hd5} and
    \begin{align} \label{h1}
        w(s_1,t_1) &:= \sum_{k=1}^d \innpr{\delta, w_k} w_k (s_1, t_1),\\ \ppad f((s_1, t_1), (s_2, t_2)) &:= \sum_{k=1}^d \innpr{\delta, w_k} w_k (s_1, t_1) \sum_{j \neq k} \frac{\innpr{\delta, w_j}}{\tau_k - \tau_j} w_j (s_2, t_2).
        \label{h2}
    \end{align}
    }
\end{thm}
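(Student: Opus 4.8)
The plan is to establish \eqref{hd18a} first and then to deduce \eqref{hd18b} from it. For the latter, recall from \eqref{prop:changepoint_conv_rate} that $\hat\vartheta_n=\vartheta_0+o_\p(n^{-1/2})$, and note that $D_n(\lambda,\vartheta)$ depends on $\vartheta$ only through its two summation blocks, which for $\vartheta=\hat\vartheta_n$ and $\vartheta=\vartheta_0$ differ by at most $O(n|\hat\vartheta_n-\vartheta_0|)=o_\p(n^{1/2})$ indices; a maximal inequality for the partial sums of $\{\eta_j^{(\ell)}\}$ (available under Assumption \ref{ass_4th} via the $m$-dependent approximation) then gives $\sqrt n\,\sup_\lambda\norm{D_n(\lambda,\hat\vartheta_n)-D_n(\lambda,\vartheta_0)}=o_\p(1)$, and since $\sup_\lambda\norm{D_n(\lambda,\cdot)}=O_\p(1)$ this transfers to the squared scores. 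I write $D_n(\lambda):=D_n(\lambda,\vartheta_0)$ from now on, keeping the eigenfunctions $\hat w_{k,\lambda}$ of $\hat c_{\hat\vartheta_n,\lambda}$ unchanged.

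The core step is a uniform-in-$\lambda$ linearisation of $\sum_{k=1}^d\innpr{D_n(\lambda),\hat w_{k,\lambda}}^2$. First, $D_n(\lambda)=-\lambda\delta+\Delta_n(\lambda)+o_\p(n^{-1/2})$ uniformly in $\lambda$, with the centred error CUSUM $\Delta_n(\lambda):=\gbr{n\vartheta_0}^{-1}\sum_{j\le\gbr{\lambda\gbr{n\vartheta_0}}}\eta_j^{(1)}-(n-\gbr{n\vartheta_0})^{-1}\sum_{j=\gbr{n\vartheta_0}+1}^{\gbr{n\vartheta_0}+\gbr{\lambda(n-\gbr{n\vartheta_0})}}\eta_j^{(2)}$ (the remainder absorbs $O(1/n)$ discrepancies such as $\gbr{\lambda m}/m-\lambda$). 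Second, fixing the orientation $\innpr{\hat w_{k,\lambda},w_k}\ge0$ — admissible since only squared scores occur — Theorem \ref{thm7}, the spectral gaps of Assumption \ref{eigvals_order} and first-order perturbation theory for symmetric compact operators give, once $\lambda$ exceeds a $\log^{4/\kappa}(n)/n$ threshold, $\hat w_{k,\lambda}=w_k+\sum_{j\neq k}(\tau_k-\tau_j)^{-1}\innpr{(\hat c_{\hat\vartheta_n,\lambda}-c_{\vartheta_0})w_k,w_j}w_j+R_{k,\lambda}$ with $\norm{R_{k,\lambda}}=O_\p(\norm{\hat c_{\hat\vartheta_n,\lambda}-c_{\vartheta_0}}_2^2)$. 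Inserting both expansions into $\innpr{D_n(\lambda),\hat w_{k,\lambda}}$, squaring, subtracting $\lambda^2\innpr{\delta,w_k}^2$ and summing over $k\le d$, every cross term but two is $o_\p(n^{-1/2})$ uniformly (using $\sup_\lambda\norm{\Delta_n(\lambda)}=O_\p(n^{-1/2})$, the $\sqrt\lambda$-weighted rate of Theorem \ref{thm7} and the factor $\lambda$ carried by $-\lambda\delta$), leaving
\begin{align*}
\sum_{k=1}^d\big(\innpr{D_n(\lambda),\hat w_{k,\lambda}}^2-\lambda^2\innpr{\delta,w_k}^2\big)
=-2\lambda\innpr{\Delta_n(\lambda),w}+2\lambda^2\innpr{\hat c_{\hat\vartheta_n,\lambda}-c_{\vartheta_0},f}_2+o_\p(n^{-1/2})
\end{align*}
uniformly in $\lambda$, with $w$ and $f$ as in \eqref{h1}--\eqref{h2}; reassembling the double sum $\sum_k\innpr{\delta,w_k}\sum_{j\neq k}(\tau_k-\tau_j)^{-1}\innpr{\delta,w_j}\innpr{(\hat c_{\hat\vartheta_n,\lambda}-c_{\vartheta_0})w_k,w_j}$ into precisely $\innpr{\hat c_{\hat\vartheta_n,\lambda}-c_{\vartheta_0},f}_2$ uses the symmetry of the kernel $\hat c_{\hat\vartheta_n,\lambda}-c_{\vartheta_0}$. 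On $\lambda\lesssim\log^{4/\kappa}(n)/n$ all quantities are treated directly: $\sqrt n$ times the left side, and $\sigma_{cp}\lambda\mathbb B(\lambda)$ in the limit, are uniformly $o_\p(1)$ there.

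It remains to show that $\sqrt n$ times the right side converges weakly in $\ell^\infty([0,1])$ to $\sigma_{cp}\{\lambda\mathbb B(\lambda)\}_{\lambda\in[0,1]}$. Writing $\hat c_{\hat\vartheta_n,\lambda}-c_{\vartheta_0}=\hat\vartheta_n(\hat c^{(1)}_\lambda-c^{(1)})+(1-\hat\vartheta_n)(\hat c^{(2)}_\lambda-c^{(2)})+(\hat\vartheta_n-\vartheta_0)(c^{(1)}-c^{(2)})$, replacing all $\hat\vartheta_n$ by $\vartheta_0$ as in Step 1 (the last term and the centering errors $\innpr{\bar\eta^{(\ell)}_\lambda\otimes\bar\eta^{(\ell)}_\lambda,f}_2$ being negligible after the $\lambda^2$ and $\sqrt n$ scaling), both $\innpr{\Delta_n(\lambda),w}$ and $\innpr{\hat c^{(\ell)}_\lambda-c^{(\ell)},f}_2$ become linear functionals of the sequential partial sums of the scalar stationary sequences $\innpr{\eta_i^{(\ell)},w}$ and of their quadratic functionals $\innpr{\eta_i^{(\ell)}\otimes\eta_i^{(\ell)},f}_2=\innpr{\eta_i^{(\ell)},\mathcal F\eta_i^{(\ell)}}$, where $\mathcal F$ is the integral operator with kernel $f$. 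For these one invokes a functional central limit theorem for Bernoulli shifts together with its analogue for the quadratic functionals; this is where the strengthened conditions (A3$'$), (A4$'$) of Assumption \ref{ass_4th} enter, the relevant lemma being that $\{\innpr{\eta_i^{(\ell)},\mathcal F\eta_i^{(\ell)}}\}$ inherits $m$-dependent approximability with a summable rate via $|\innpr{\eta_i,\mathcal F\eta_i}_2-\innpr{\eta_{i,m},\mathcal F\eta_{i,m}}_2|\le\norm{f}_2(\norm{\eta_i}+\norm{\eta_{i,m}})\norm{\eta_i-\eta_{i,m}}$ and Hölder's inequality (hence $\kappa>4+\psi$). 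The segments $[0,\vartheta_0]$ and $(\vartheta_0,1]$ are asymptotically independent because the common innovation sequence couples them only through an $O(m)=o(\sqrt n)$ boundary block. Since $\innpr{\hat c^{(\ell)}_\lambda-c^{(\ell)},f}_2\approx(\lambda n_\ell)^{-1}\sum_{i\le\lambda n_\ell}(\cdots)$, the prefactor $\lambda^2$ collapses to a single $\lambda$, so the whole linearised process is $\lambda$ times one partial-sum process; its finite-dimensional limits come from the multivariate CLT above, tightness follows as in the proof of Theorem \ref{thm:cp_spatiotemporal_mean}, and computing the long-run variance of the combined summand identifies the limit as $\sigma_{cp}\{\lambda\mathbb B(\lambda)\}_{\lambda\in[0,1]}$; \eqref{hd18a}, \eqref{hd18b} (and, taking $\lambda=1$, the asymptotic normality of \eqref{hd17}) follow.

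The main obstacle is the uniform-in-$\lambda$ linearisation of the estimated squared scores: one must control the eigenfunction perturbation uniformly, which requires marrying the $\sqrt\lambda$-weighted rate of Theorem \ref{thm7} with the gap $\min_{k\le d}(\tau_k-\tau_{k+1})>0$ and isolating the degenerate region $\lambda\to0$, and one must verify that the many product and remainder terms produced by perturbing $D_n(\lambda)$ off $-\lambda\delta$ and $\hat w_{k,\lambda}$ off $w_k$ simultaneously are $o_\p(n^{-1/2})$ uniformly in $\lambda$. A second, essentially self-contained difficulty is the invariance principle for the quadratic functionals $\innpr{\eta_i^{(\ell)},\mathcal F\eta_i^{(\ell)}}$ under only the $m$-dependent approximation hypothesis, together with the asymptotic independence of the pre- and post-change segments despite the shared innovations.
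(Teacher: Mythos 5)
Your proposal tracks the paper's own proof closely: the reduction from $\hat\vartheta_n$ to $\vartheta_0$ is the content of Lemma \ref{lem:approx_for_cp_in_summation}(2) (which the paper proves via the Gaussian strong approximation of Theorem \ref{generalberkes} rather than a raw maximal inequality, though the rate you state is the same), the uniform perturbation expansion of $\hat w_{k,\lambda}$ is exactly Proposition \ref{prop12} phrased in terms of $\hat Z_{n,\lambda}\approx\lambda\sqrt{n}(\hat c_{\hat\vartheta_n,\lambda}-c_{\vartheta_0})$, and your linearised representation $-2\lambda\innpr{\Delta_n(\lambda),w}+2\lambda^2\innpr{\hat c_{\hat\vartheta_n,\lambda}-c_{\vartheta_0},f}_2$ coincides with the paper's $I_{k,n}+J_{k,n}+K_{k,n}$ decomposition once one sums over $k$. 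The concluding bivariate FCLT for Bernoulli shifts and their quadratic functionals, the $m$-dependent approximability of $\innpr{\eta_i,\mathcal F\eta_i}_2$ under (A3$'$)--(A4$'$), and the asymptotic independence of the two segments (realised in the paper as vanishing covariance of Brownian increments on disjoint $\lambda$-intervals) are precisely Lemma \ref{general_lemma_b1} with $\xi_1=\xi_2=f$, $\zeta_1=-w/\vartheta_0$, $\zeta_2=-w/(1-\vartheta_0)$, so your route is essentially the paper's.
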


Note that Theorem \ref{thm9} implies,
 the statistic 
$\sqrt{n}\sum_{k=1}^d \big ( \innpr{D_n(1, \hat{\vartheta}_n), \hat{w}_k}^2 - \innpr{\delta, w_k}^2  \big )$ converges weakly to a normal distribution with mean $0$ and variance $\sigma_{cp}^2$. However, 
the limiting variance $ \sigma_{cp}^2$ 
is rather difficult to estimate and therefore we propose again to proceed by self-normalization.
For this purpose define 
\begin{equation} \label{det32}
\hat{\mathbb{Y}}_n := \br{\int_0^1 \br{\sum_{k=1}^d  \innpr{ D_n (\lambda, \hat{\vartheta}_n) , \hat{w}_{k, \lambda} }^2 - \lambda^2 \innpr{D_n (1, \hat{\vartheta}_n), \hat{w}_k}^2  }^2 \mathrm{d} \nu (\lambda) }^{1/2},
\end{equation}
for some probability measure $\nu$ on the interval $(0,1)$. Then we propose to reject 
the null hypothesis in 
\eqref{hd9}, whenever
\begin{align}\label{disp:cp_mean_testrule}
    \sum_{k=1}^d \innpr{D_n(1, \hat{\vartheta}_n), \hat{w}_k}^2 > \Delta + q_{1-\alpha} (\mathbb{W}) \hat{\mathbb{Y}}_n.
\end{align}
The next result shows that  this decision rule defines a reasonable test for the hypotheses \eqref{hd9}. The proof is obtained by similar arguments  as  
given in the proof of Theorem 
\ref{thm:test_asymptotics} and is therefore omitted.
\begin{thm}\label{theorem6}
{\it 
If Assumption \ref{ass_4th} is satisfied, $\alpha \leq 0.5$, $\Delta > 0$ and $\vartheta_0 \in  (\varepsilon, 1- \varepsilon)$,
%\red{and $\sigma_{cp}^2>0$}
%the decision rule \eqref{disp:cp_mean_testrule} defines a consistent and asymptotic level $\alpha$ test for the relevant hypotheses in \eqref{hd9}, that is
we have
    \begin{align*}
        \lim_{n \to \infty} \P{\sum_{k=1}^d \innpr{D_n(1, \hat{\vartheta}_n), \hat{w}_k}^2 > \Delta + q_{1-\alpha} (\mathbb{W}) \hat{\mathbb{Y}}_n} = \begin{cases}
            0,& \text{if } \sum_{k=1}^d \innpr{\delta, w_k}^2 < \Delta,\\
            \alpha,& \text{if } \sum_{k=1}^d \innpr{\delta, w_k}^2 = \Delta \text{ and }  \sigma_{cp}^2 >0 ,\\
            1,& \text{if } \sum_{k=1}^d \innpr{\delta, w_k}^2 > \Delta.
        \end{cases}
    \end{align*}
    }
\end{thm}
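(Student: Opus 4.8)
The plan is to mimic the proof of Theorem \ref{thm:test_asymptotics}, using Theorem \ref{thm9} and Theorem \ref{thm7} as the analytic engine and Remark \ref{remsym} for the symmetry of $\mathbb{W}$. Write $\hat{\mathbb{S}}_n := \sum_{k=1}^d \innpr{D_n(1, \hat{\vartheta}_n), \hat{w}_k}^2$ and $S_\delta := \sum_{k=1}^d \innpr{\delta, w_k}^2$. The first step is to observe that \eqref{hd18b} of Theorem \ref{thm9}, evaluated at $\lambda = 1$ together with the continuous mapping theorem applied to the functional $x(\cdot) \mapsto \int_0^1 (x(\lambda) - \lambda^2 x(1))^2 \, \mathrm{d}\nu(\lambda)$, yields the joint convergence
\begin{align*}
\Big( \sqrt{n}(\hat{\mathbb{S}}_n - S_\delta), \, n \hat{\mathbb{Y}}_n^2 \Big) \convd \Big( \sigma_{cp} \mathbb{B}(1), \, \sigma_{cp}^2 \int_0^1 \lambda^2 (\mathbb{B}(\lambda) - \lambda \mathbb{B}(1))^2 \, \mathrm{d}\nu(\lambda) \Big),
\end{align*}
where I should be slightly careful: the integrand defining $\hat{\mathbb{Y}}_n$ in \eqref{det32} uses $\hat{w}_{k,\lambda}$ inside the first square and $\hat{w}_k$ (i.e. $\hat{w}_{k,1}$) inside the second, so the functional is not literally continuous in a single argument; one resolves this by showing $\sup_{\lambda}\sqrt{n}|\sum_k \langle D_n(\lambda,\hat\vartheta_n),\hat w_{k,\lambda}\rangle^2 - \lambda^2\sum_k\langle D_n(\lambda,\hat\vartheta_n),\hat w_{k,1}\rangle^2|$ is negligible for $\lambda$ bounded away from $0$ — but since $\nu$ has support in $(0,1)$ and $\hat{w}_{k,\lambda}$ is consistent for $w_k$ uniformly in $\lambda$ bounded away from $0$ (Theorem \ref{thm7} plus perturbation bounds for eigenfunctions under Assumption \ref{eigvals_order}), this term vanishes. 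This is the one genuinely new technical point relative to Theorem \ref{thm:test_asymptotics}; everything else is structurally identical.

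Granting the joint convergence, assume first $\sigma_{cp}^2 > 0$. Then on the event $\{\hat{\mathbb{Y}}_n > 0\}$ (which has probability tending to $1$) the decision rule \eqref{disp:cp_mean_testrule} is equivalent to
\begin{align*}
\frac{\hat{\mathbb{S}}_n - S_\delta}{\hat{\mathbb{Y}}_n} > \frac{\Delta - S_\delta}{\hat{\mathbb{Y}}_n} + q_{1-\alpha}(\mathbb{W}),
\end{align*}
and by Slutsky the left-hand side converges in distribution to $\sigma_{cp}\mathbb{B}(1) / \big(\sigma_{cp}(\int \lambda^2(\mathbb{B}(\lambda)-\lambda\mathbb{B}(1))^2 \mathrm{d}\nu)^{1/2}\big) = \mathbb{W}$, the $\sigma_{cp}$ cancelling. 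In the boundary case $S_\delta = \Delta$ the shift term $(\Delta - S_\delta)/\hat{\mathbb{Y}}_n$ is identically $0$, so the rejection probability tends to $\mathrm{P}(\mathbb{W} > q_{1-\alpha}(\mathbb{W})) = \alpha$; here one uses that $\mathbb{W}$ has a continuous distribution (its law is symmetric and non-degenerate by Remark \ref{remsym}, so in particular the quantile is attained with the correct mass, and the restriction $\alpha \le 0.5$ guarantees $q_{1-\alpha}(\mathbb{W}) \ge 0$, which matters for the other two cases). If $S_\delta < \Delta$ then $(\Delta - S_\delta)/\hat{\mathbb{Y}}_n \to +\infty$ in probability (numerator a fixed positive constant, $\hat{\mathbb{Y}}_n \to 0$), so the rejection probability tends to $0$; if $S_\delta > \Delta$ then $(\Delta - S_\delta)/\hat{\mathbb{Y}}_n \to -\infty$, while $q_{1-\alpha}(\mathbb{W})$ is a fixed constant and $(\hat{\mathbb{S}}_n - S_\delta)/\hat{\mathbb{Y}}_n$ is bounded in probability, so the rejection probability tends to $1$.

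It remains to dispose of the degenerate possibility $\sigma_{cp}^2 = 0$, which is only relevant for the cases $S_\delta < \Delta$ and $S_\delta > \Delta$ (the boundary case excludes it by hypothesis). If $\sigma_{cp}^2 = 0$ then $\sqrt{n}(\hat{\mathbb{S}}_n - S_\delta) \to 0$ and $\sqrt{n}\,\hat{\mathbb{Y}}_n \to 0$ in probability, so $\hat{\mathbb{S}}_n \to S_\delta$ and $\hat{\mathbb{Y}}_n \to 0$; since $q_{1-\alpha}(\mathbb{W}) \ge 0$, the rejection event $\hat{\mathbb{S}}_n > \Delta + q_{1-\alpha}(\mathbb{W})\hat{\mathbb{Y}}_n$ has probability tending to $\mathbbm{1}\{S_\delta > \Delta\}$, consistent with the two non-boundary cases of the claim. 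The main obstacle, as noted, is the careful handling of the mismatch between $\hat{w}_{k,\lambda}$ and $\hat{w}_k$ in the self-normalizer and the verification that the continuous-mapping/Slutsky argument goes through with the weak convergence supplied by Theorem \ref{thm9}; once that reduction is in place the argument is a line-by-line transcription of the proof of Theorem \ref{thm:test_asymptotics}, which is why the authors omit it.
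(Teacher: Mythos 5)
Your overall plan — reduce everything to the joint weak convergence of $\big(\sqrt{n}(\hat{\mathbb{S}}_n - S_\delta),\sqrt{n}\hat{\mathbb{Y}}_n\big)$ via the continuous mapping theorem applied to \eqref{hd18b}, then transcribe the case analysis from Theorem \ref{thm:test_asymptotics} — is exactly what the paper means by ``similar arguments.'' However, the ``genuinely new technical point'' you identify does not exist, and your proposed fix for it is actually false. Write $A_n(\lambda) := \sum_{k=1}^d \langle D_n(\lambda,\hat\vartheta_n),\hat w_{k,\lambda}\rangle^2$. Since $\hat w_k = \hat w_{k,1}$, the integrand in \eqref{det32} is $(A_n(\lambda)-\lambda^2 A_n(1))^2$; both pieces are the \emph{same} process $A_n$ at different arguments, not two distinct processes. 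Setting $X_n(\lambda) := \sqrt{n}(A_n(\lambda)-\lambda^2 S_\delta)$, the $\lambda^2 S_\delta$ terms cancel identically: $A_n(\lambda)-\lambda^2 A_n(1) = n^{-1/2}\big(X_n(\lambda)-\lambda^2 X_n(1)\big)$, so $\sqrt{n}\hat{\mathbb{Y}}_n = g(X_n)$ with the continuous functional $g(x)=\big(\int_0^1(x(\lambda)-\lambda^2 x(1))^2\,\mathrm{d}\nu(\lambda)\big)^{1/2}$, and the CMT applies directly with no correction term. The quantity you display is not the right one — you wrote $\lambda^2\sum_k\langle D_n(\lambda,\hat\vartheta_n),\hat w_{k,1}\rangle^2$ with $D_n(\lambda,\cdot)$ rather than $D_n(1,\cdot)$ — and it is \emph{not} negligible: at leading order $A_n(\lambda)\approx\lambda^2 S_\delta$ and $\sum_k\langle D_n(\lambda,\hat\vartheta_n),\hat w_{k,1}\rangle^2\approx\lambda^2 S_\delta$, so the sup you claim vanishes behaves like $\sqrt{n}\,\lambda^2(1-\lambda^2)S_\delta = O(\sqrt{n})$. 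Luckily this false claim is not load-bearing since the reduction is exact, but it should be removed.

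The second gap is real and load-bearing: your treatment of $\sigma_{cp}^2 = 0$ begins ``then $\sqrt{n}(\hat{\mathbb{S}}_n-S_\delta)\to 0$\dots,'' which implicitly invokes Theorem \ref{thm9}; but Theorem \ref{thm9} assumes $\|\delta\|^2>0$, and $\delta\equiv 0$ (hence $S_\delta = 0 < \Delta$ and $\sigma_{cp}^2 = 0$) is a case that must be covered. The paper's proof of Theorem \ref{thm:test_asymptotics} handles the analogous $\|\delta\|^2 = 0$ case up front by a direct consistency argument (part 1 of Lemma \ref{lem:approx_for_cp_in_summation}) rather than appealing to the functional limit theorem. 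You need the same step here: when $\delta\equiv 0$, $\hat{\mathbb{S}}_n\leq \|D_n(1,\hat\vartheta_n)\|^2 = o_\p(1)$ by orthonormality of $\hat w_k$ and Lemma \ref{lem:approx_for_cp_in_summation}, and since $\hat{\mathbb{Y}}_n\geq 0$, $q_{1-\alpha}(\mathbb{W})\geq 0$ and $\Delta>0$, the rejection probability tends to $0$. Adding this case and deleting the spurious ``mismatch'' digression makes the argument complete.
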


\begin{rem} \label{rem1}
Parseval's implies 
$\|\delta \|^2  = \sum_{k=1}^\infty \innpr{\delta, w_k}^2$, and one can  also use the decision rule \eqref{disp:cp_mean_testrule}
for testing the hypotheses \eqref{hd2} (the null hypothesis is rejected, if \eqref{disp:cp_mean_testrule} holds). As a consequence of Theorem \ref{thm9}, we obtain 
for the rejection probabilities of this test - provided that all the requirements stated in Theorem \ref{theorem6} are satisfied
     \begin{align*}
        \lim_{n \to \infty} \P{\sum_{k=1}^d \innpr{D_n(1, \hat{\vartheta}_n), \hat{w}_k}^2 > \Delta + q_{1-\alpha} (\mathbb{W}) \hat{\mathbb{Y}}_n} = \begin{cases}
            0,& \text{if } \norm{\delta}^2 < \Delta,\\
            1,& \text{if } \sum_{k=1}^d \innpr{\delta, w_k}^2 > \Delta
        \end{cases}
    \end{align*}
    and, if $\norm{\delta}^2 = \Delta$, and $\sigma_{cp}^2 > 0$, we have
    \begin{align*}
        \lim_{n \to \infty} \P{\sum_{k=1}^d \innpr{D_n(1, \hat{\vartheta}_n), \hat{w}_k}^2 > \Delta + q_{1-\alpha} (\mathbb{W}) \hat{\mathbb{Y}}_n} \leq \alpha.
    \end{align*}
    This means that the decision rule (\ref{disp:cp_mean_testrule}) defines  also a  (conservative)  asymptotic level $\alpha$ test for the hypotheses \eqref{hd2}. Moreover, similar as in the case of testing classical hypotheses the test is consistent, whenever $\sum_{k=1}^d \innpr{\delta, w_k}^2 > \Delta$.
\end{rem}

\section{Finite sample properties} 
\label{sec5} 
  \def\theequation{5.\arabic{equation}}
\setcounter{equation}{0}

In this section, we illustrate the finite sample properties of the proposed tests by means of a small simulation study and by the analysis of a data example. 
Throughout this section, if not mentioned otherwise, we use a uniform distribution 
$\nu_{19} = \frac{1}{19} \sum_{i=1}^{19} \delta_{i/20}$
at the the points $1/19, \ldots , 18/19$ for the 
the probability measure $\nu$ in the pivotal statistic (here $\delta_x$ denotes the Dirac measure
at the point $x$).  We also demonstrate below that 
the tests \eqref{hd7} and \eqref{disp:cp_mean_testrule} are not very sensitive with respect to the choice of this measure.

\subsection{Simulation study} \label{sec51} 
For the illustration of the methods introduced
 in Section \ref{sec3} (fully functional) 
and in Section \ref{sec4} (functional principal components) we put $\mu \equiv 0$ and as difference before and after the change point we use the function 
\begin{align}\label{delta_function}
    \delta(s,t) = \sqrt{\gamma} s \cos \br{\frac{\pi}{2} t}.
\end{align}
The parameter $\gamma$ will be used to vary the size of the quantity $\norm{\delta}^2$ and 
$\sum_{k=1}^d \innpr{\delta, v_k}^2$
in the hypotheses \eqref{hd2} and \eqref{hd9}, respectively.
In all cases we consider $4$ locations, that is  $S=\cbr{1,2,3,4}$, the threshold is $\Delta = 0.15$ and 
the sample size is given by  $n= 150, 250, 500$. The position of the change point is  chosen as $\vartheta_0 = 0.6$ and the tuning parameter $\varepsilon$ in the change point estimator defined in \eqref{def_changepoint_estimator} is set to $0.05$. 
Throughout this section all results are based on $1000$ simulation runs.

For both procedures we consider  different error processes $\{\eta_i \}_{i \in \mathbb{Z}}$
in model \eqref{hd1}, where we assume that the processes before and after the change point 
have the same distribution, that is 
$\{ \eta_i \}_{i \in \mathbb{Z}} \overset{{\cal D}}{=} \{ \eta_i^{(1)} \}_{i \in \mathbb{Z}}\overset{{\cal D}}{=}
\{ \eta_i^{(2)} \}_{i \in \mathbb{Z}} $. The first one consists of 
independent (scaled) Brownian motions, that is
\begin{equation}
    \label{det1b}
      \eta_i (s,t) = {s \over 4}  \mathbb{W}_i(t) ~,~i=1, \ldots , n,
\end{equation}
(here $\mathbb{W}_i$ denotes the standard Brownian motion on the interval $[0,1]$). Note that this process has a separable covariance function. 
Secondly, we consider a  process 
$\{  \eta_i \}_{i\in \mathbb{Z}}$ 
of independent functions with non-separable
covariance  defined by 
\begin{align}
\label{det1b1}
    \eta_i (s,t) = \sum_{k=1}^\infty \frac{N_{k,i}}{2 \pi k} (\sin (2 \pi k t) + s \cdot \cos (2 \pi k t) ),
\end{align}
where $N_{k,i}$ denote independent standard normal distributed random variables. 
The third process is a functional moving average 
process (fMA) of order $1$. More precisely, we 
define independent processes 
\begin{align}
    \label{det1d1}
    \varepsilon_i (s,t) = \sum_{j=1}^\infty N_{ji} \sqrt{\lambda_j} v_j (s,t),
\end{align}
where $N_{ji}$ are independent standard normal
distributed 
random variables, $\lambda_j = (2 \pi j^2)^{-1}$ and $v_j (s,t) = \frac{\sqrt{2}}{4} \cdot s \cdot \sin (2 j \pi t)$, and consider the fMA(1) process
\begin{align}
    \label{det1d}
\eta_i (s,t) = \varepsilon_i + 0.7 \varepsilon_{i-1}~.
\end{align}
(we use  the first $40$ terms in  the expansion  \eqref{det1d1}). The data is generated and stored in Fourier basis representations using the R-package \texttt{fda}.

\subsubsection{Fully functional detection of relevant changes} 
\label{sec511}
We begin with an investigation of the fully functional 
approach and display in  Figure \ref{fig1} the  rejection probabilities of the test (\ref{hd7}) for the hypotheses (\ref{hd2}) (with $\Delta = 0.15$) for the three different error processes. 
We observe a qualitatively similar behaviour in all three cases as predicted by Theorem \ref{thm:test_asymptotics}. The rejection probability is strictly increasing with $\| \delta \|^2$.
It is close to the nominal level $5\%$ if $\| \delta \|^2 = \Delta $ and smaller (larger) 
than $5\%$  if $\| \delta \|^2 <\Delta $ ($\| \delta \|^2 > \Delta $). 
A comparison of the three error processes shows 
that the best power of 
the test is obtained for  the error process
\eqref{det1b1} followed  by the 
error process  \eqref{det1d},  while for the 
error process \eqref{det1b} the test is less powerful.

\begin{figure}[!ht]
    \centering
     {{\includegraphics[width=5.4cm]{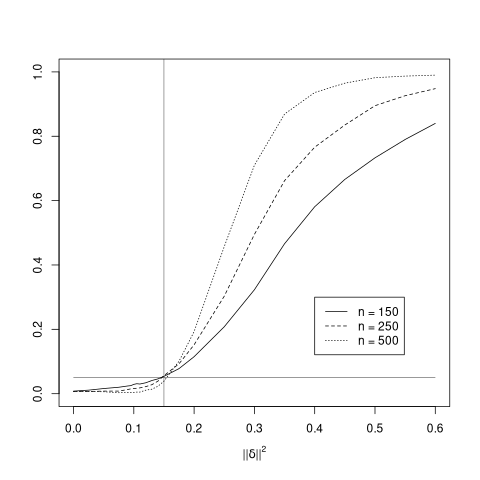} }}%
     {{\includegraphics[width=5.4cm]{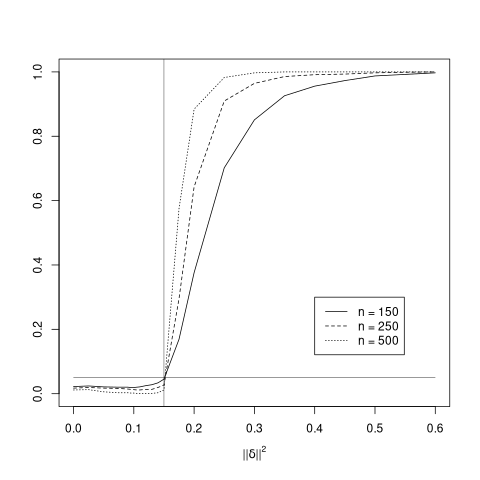} }}%
     {{\includegraphics[width=5.4cm]{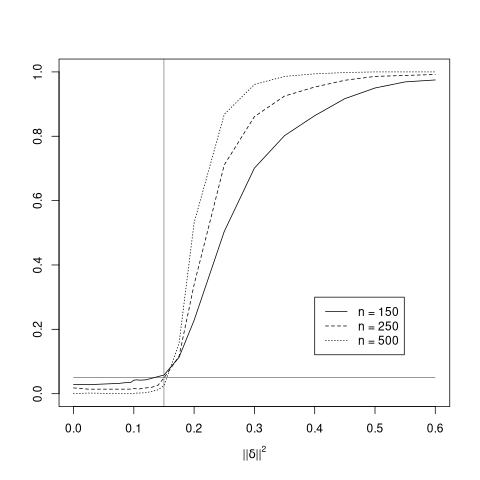} }}%
    \caption{\it
    Empirical rejection probabilities of the test (\ref{hd7}) for the hypotheses (\ref{hd2})
    with $\Delta = 0.15$.  The difference $\delta$ between the mean functions   is given by (\ref{delta_function}) and different error processes are considered.
    Left panel:  scaled Brownian motion  \eqref{det1b};  middle panel: non-seperable process  in \eqref{det1b1};
    right panel: fMA(1) process in \eqref{det1d}. }
    \label{fig1}
\end{figure}

\begin{figure}[!ht]
    \centering
     {{\includegraphics[width= 5.4cm]{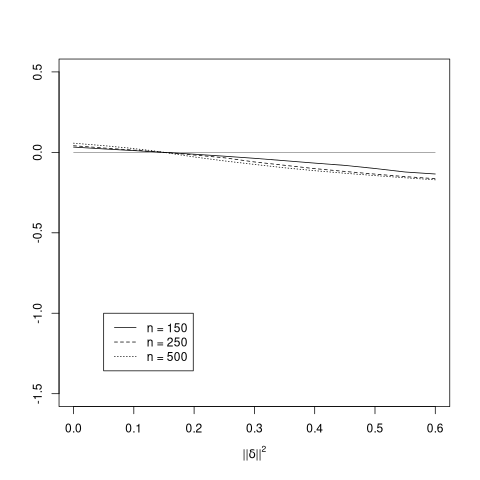} }}%
     {{\includegraphics[width= 5.4cm]{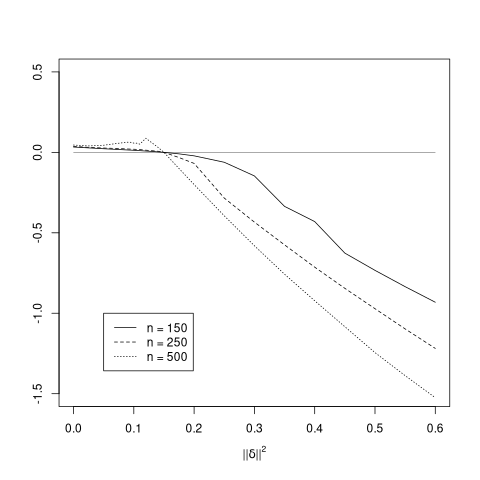} }}%
     {{\includegraphics[width= 5.4cm]{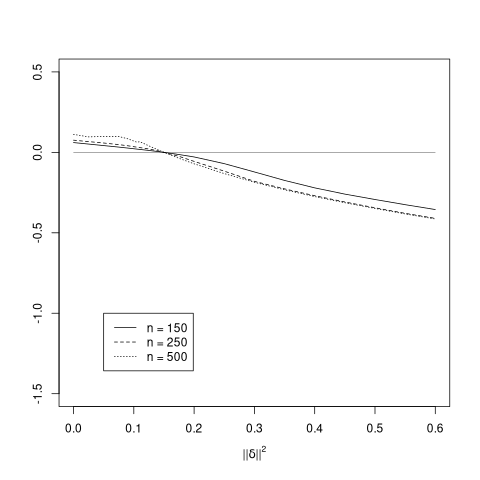} }}%
    \caption{ \it Size of the term $(\Delta - \norm{\delta}^2)/   \tau_{\delta, \vartheta_0} $
     as a function $\norm{\delta}^2$
     in the approximation of the rejection probability  ($\Delta = 0.15$) in \eqref{hol2}.
      Left panel:  scaled Brownian motion  \eqref{det1b};  middle panel: non-seperable process  in \eqref{det1b1};
    right panel: fMA(1) process in \eqref{det1d}. }
\end{figure}

This  observation can be explained  by  the fact that the different error  processes have different variability.
More precisely, it follows from the proof of 
Theorem \ref{thm:test_asymptotics} that 
for large sample sizes the probability of rejection can be approximated by 
 \begin{align}
\label{hol2}
     \P{\hat{\mathbb{W}} > {\sqrt{n}  \over \mathbb{V}}{
     \Delta - \norm{\delta}^2 
     \over    \tau_{\delta, \vartheta_0} } 
     + q_{1-\alpha} (\mathbb{W}) } ~, 
 \end{align}
where  $  \tau_{\delta, \vartheta_0}^2 $
is defined in \eqref{hol1} and 
$\mathbb{V}= \br{\int \lambda^2 (\mathbb{B}(\lambda) - \lambda \mathbb{B} (1) )^2 \mathrm{d}\nu (\lambda) }^{1/2}$
denotes a generic random variable (a
functional of the Brownian motion). Thus, the power is dominated by the term 
$(\Delta - \norm{\delta}^2)/ 
      \tau_{\delta, \vartheta_0} $, which is negative under the alternative.  A  smaller value of this term  
      results  in a larger power  and in Figure \ref{fig1} we display this quantity 
      as a function of $\|\delta \|^2$. The 
      results explain  the differences in  the simulated power for the three processes under consideration.
      \\
We also note that the approximation of the nominal
level at  the boundary  of the hypotheses $\|\delta\|^2 = \Delta$
differs in the scenarios. For small sample sizes it is  more accurate for the non-separable process
\eqref{det1b1} compared to the two other cases.
A possible explanation for this observation
is the different   
accuracy of the change point estimator \eqref{def_changepoint_estimator} in the three scenarios,
which is displayed in Figure \ref{cp_hist1}.
We observe that the change point estimator for the error process \eqref{det1b}
 exhibits a larger variability than in the two other cases, while the smallest variability is obtained for the error process \eqref{det1b1}.

\begin{figure}[!ht]
    \centering
         {{\includegraphics[width= 5.4cm]{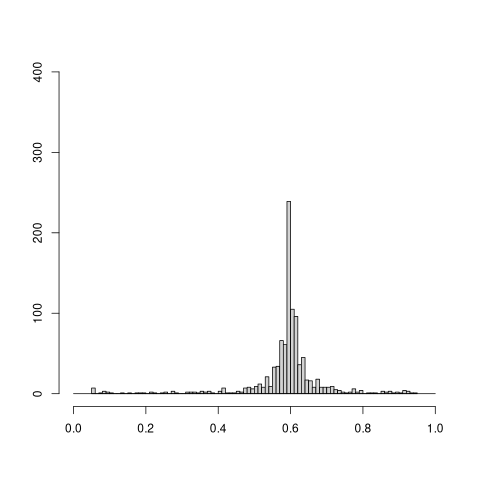} }}%
     {{\includegraphics[width= 5.4cm]{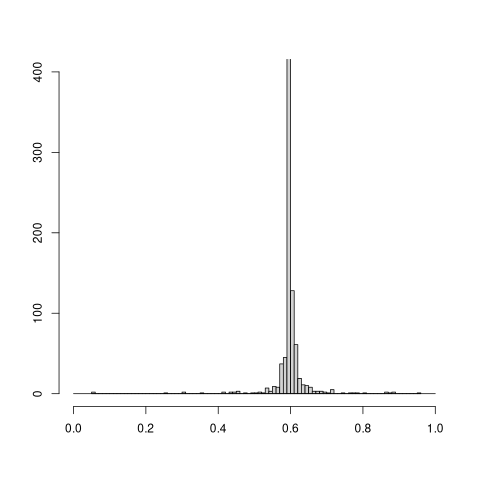} }}%
      {{\includegraphics[width= 5.4cm]{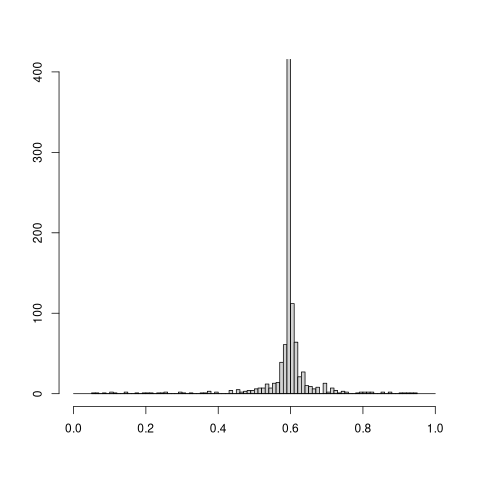} }}%   
    \caption{ \it Histogram of the change point estimator based on $n=250$ observations. The difference between the  mean functions is given by \eqref{delta_function}, $\Delta = 0.15$, the change point is located as
    $\vartheta_0 = 0.6$ and different scenarios for the error process
    are considered. Left panel:  scaled Brownian motion in \eqref{det1b};  middle panel: non-separable process  in \eqref{det1b1};
    right panel: fMA(1) process in \eqref{det1d}.} 
    \label{cp_hist1}
\end{figure}

Next we investigate the impact of the measure  $\nu$ in the self-normalizing factor \eqref{det1a} on the properties of the test \eqref{hd7}. 
Note that this
measure appears in the definition 
of the statistic $\hat{\mathbb{V}}_n$ 
in \eqref{det1a} and in the random variable
$\mathbb {W}$ in \eqref{disp:random_variable_w}. Thus, intuitively, there is a cancellation effect in the decision rule \eqref{hd7}. 
For the sake of brevity, 
we restrict ourselves to the case of the fMA(1) error process
\eqref{det1d}  and display in Figure \ref{fig3}  the rejection probabilities of the test \eqref{hd7}, where we use
a uniform distribution
\begin{equation} \label{det31}
\nu_k = \frac{1}{k} \sum_{i=1}^{k} \delta_{i/(k+1)}
\end{equation} 
at $k=4,9$ and $k=19$ points as 
measure in the statistic \eqref{det1a}.
We observe a rather similar behaviour for all three measures, where $\nu_{19}$
yields a slightly better approximation of the nominal level at the boundary of the hypotheses 
(that is $\|\delta \|^2= \Delta = 0.15$) 
for the sample size $n=150$.

\begin{figure}[!ht]
    \centering
     {{\includegraphics[width= 5.4cm]{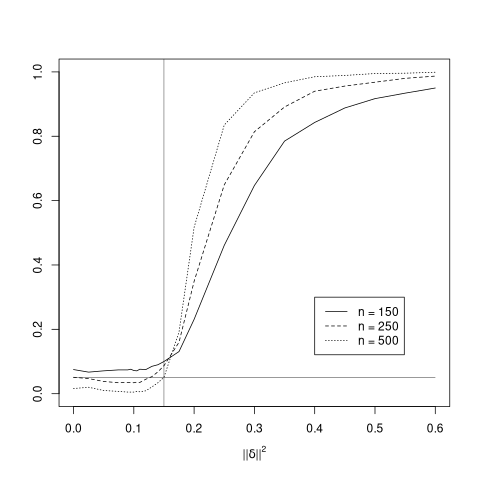} }}%
     {{\includegraphics[width= 5.4cm]{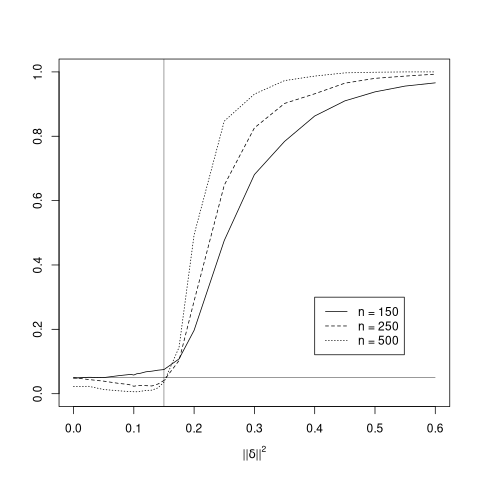} }}%
     {{\includegraphics[width= 5.4cm]{rejcurves/ma-norm} }}%
    \caption{
   \it 
    Empirical rejection probabilities of the test (\ref{hd7}) for the hypotheses (\ref{hd2})
    with $\Delta = 0.15$.  The error process is given by an fMA(1) model 
    and different measures in the statistic \eqref{det1a}  are considered.
    Left panel: $\nu_4$, middle panel: $\nu_9$, right panel: $\nu_{19}$.
    \label{fig3} }
\end{figure}

\begin{figure}[!ht]
    \centering
     {{\includegraphics[width= 5.4cm]{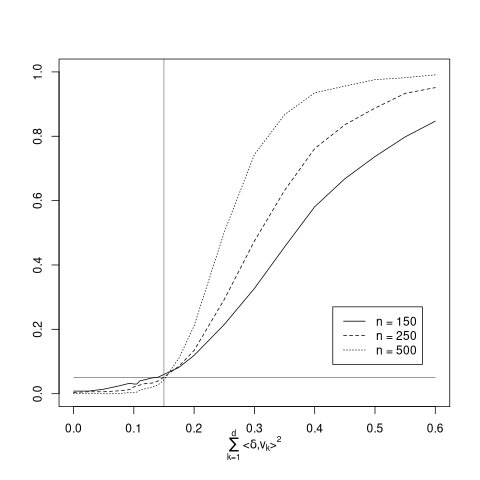} }}%
     {{\includegraphics[width= 5.4cm]{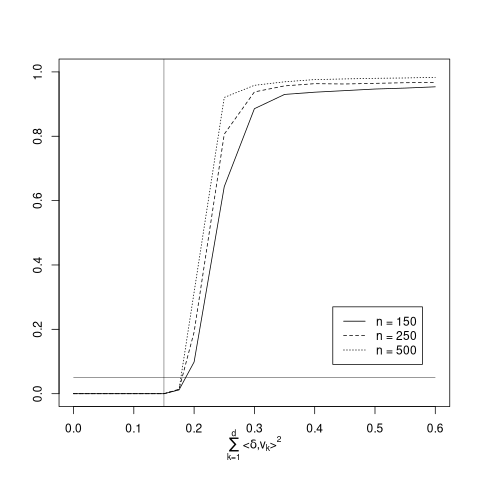} }}%
     {{\includegraphics[width= 5.4cm]{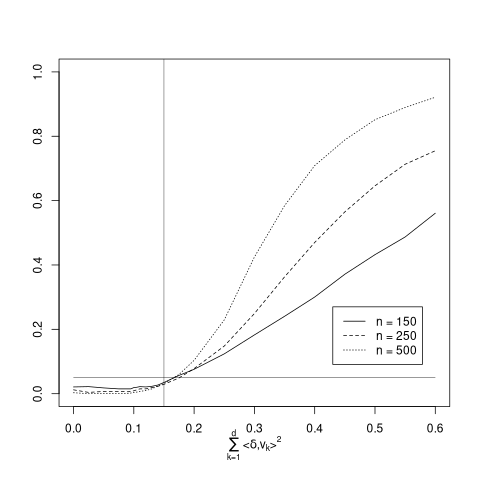} }}%
    \caption{ 
    \it % \HD{bitte jetzt Beschreibung anpassen - wie in den vorigen Tabellen !} 
    % \blue{Alles aktuell. Links neue BM mit $d=15$, mitte: nicht-sep. prozess mit $d=11$ und rechts nach wie vor fMA(1) mit $d=11$}
    Empirical rejection probabilities of the test 
    (\ref{disp:cp_mean_testrule}) for the hypotheses (\ref{hd9})
    with $\Delta = 0.15$.  
    The difference $\delta$ between the mean functions 
    is given by (\ref{delta_function}) and different error processes are considered.
    Left panel:   Brownian motion  ($d=15$) ; middle panel:  non-separable process  ($d=11$); right panel: fMA(1) process  ($d=11$).}
        \label{fig2}
\end{figure}

\subsubsection{Relevant changes by 
functional principal components} 
\label{sec512}

In this section, we briefly  illustrate the finite sample properties of the test
(\ref{disp:cp_mean_testrule}) for the hypotheses (\ref{hd9}), where we use the same scenarios as before.  The test requires the choice of the number of principal functional components and we choose
 the parameter $d$ 
 such that 95\% of the variance in the data will be explained. This results in
 $d=15$, $d=11$ and $d=11$ functional principal components 
 for the models \eqref{det1b}, \eqref{det1b1} and \eqref{det1d},
 respectively. 
The corresponding rejection probabilities are displayed in Figure \ref{fig2} and we   observe the qualitative
behaviour predicted by Theorem \ref{theorem6}.
We also observe that the test  (\ref{disp:cp_mean_testrule}) 
is conservative in the case of the non-seperable process
\eqref{det1b1}, while the nominal level at the boundary of the hypotheses $\sum_{k=1}^d \innpr{\delta,  b_k}^2= \Delta$ is very well approximated for the Brownian motion \eqref{det1b}.

%\begin{figure}[!ht]
 %   \centering
%      {{\includegraphics[width= 5.4cm]{} }}%
%    \qquad
%     {{\includegraphics[width= 5.4cm]{} }}%
%    \qquad
%     {{\includegraphics[width= 5.4cm]{} }}%
%    \caption{ \it \blue{alles aktuell. Links: neue BMs $d=15$, mitte: nicht sep. $d=11$, rechts: fma(1) $d=11$}  Histogram of the change point estimator based on $n=250$ observations. The sum of squared scores 
%    of the difference between the  mean functions is given by   $\sum_{k=1}^d \innpr{\delta, v_k}^2$ $\Delta   = 0.15$, the location of the change point is
%    $\vartheta_0 = 0.6$ and different scenarios for the error process
%    are considered. Left panel:   Brownian motion  ($d=15$) ; middle panel:  Brownian bridges   ($d=17$); right panel: fMA(1) process  ($d=11$).
%    }
%    \label{cp_hist2}
%\end{figure}

Finally we investigate the impact of the measure $\nu$ in the scaling factor \eqref{det32}, where 
we again restrict ourselves to the case of an fMA(1) process and the uniform distributions $\nu_k$ in \eqref{det31} for $k=4,9$ and $19$.
The corresponding results are shown in Figure \ref{fig6} and demonstrate that the test 
\eqref{disp:cp_mean_testrule} is not very sensitive with respect to this choice.

\begin{figure}[!ht]
    \centering
     {{\includegraphics[width= 5.4cm]{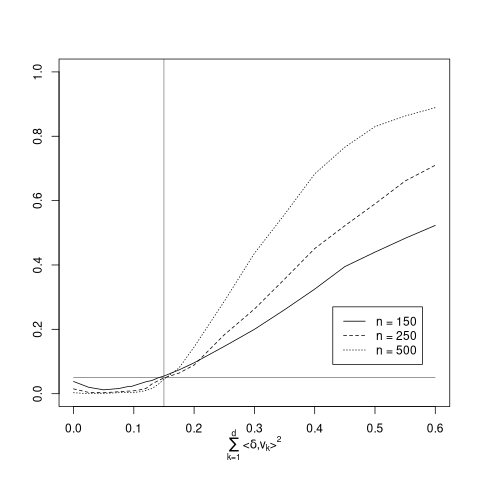} }}%
     {{\includegraphics[width= 5.4cm]{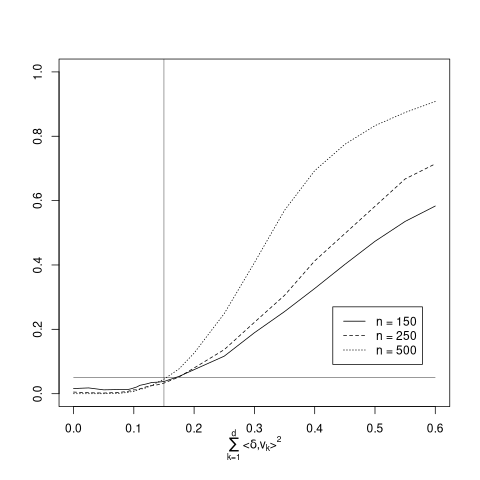} }}%
     {{\includegraphics[width= 5.4cm]{rejcurves/ma-d11} }}%
    \caption{
      \it
    Empirical rejection probabilities of the test (\ref{disp:cp_mean_testrule})
    for the hypotheses \eqref{hd9} 
    with $\Delta = 0.15$.  The error process is given by an fMA(1) model 
    and different measures in the statistic \eqref{det1a}  are considered.
    Left panel: $\nu_4$, middle panel: $\nu_9$, right panel: $\nu_{19}$.
    \label{fig6}}
\end{figure}

\subsection{Data example} \label{sec52}

We conclude this paper with an  application of 
the two test procedures in a real  data example. For this purpose, we use Canadian weather data, which consists of  daily measurements at $40$ representative Canadian cities.
Thus we observe yearly curves at different locations  over different years ($1916 - 2018$) at different locations. The data  can be downloaded from  the government of Canada website: \url{https://climate.weather.gc.ca/historical_data/search_historic_data_e.html}. The available data contains several different measurements such as maximum/minimum temperature or precipitation amount. Here, for the sake of brevity, we concentrate on  the average daily temperatures. Due to missing values in the reported temperature data, four  stations were chosen such that a large amount of available data overlaps and only little parts had to be interpolated or removed:  
 \textsc{Calgary International Airport, Alberta} (ID: 2205), \textsc{Medicine Hat Airport, Alberta} (ID: 2273), \textsc{Indian Head CDA, Saskatchewan} (ID: 2925) and \textsc{Ottawa CDA, Ontario} (ID: 4333). In the notation of the previous sections this means $S= \{1,2,3,4\}$ and the  sample size is given by $n=113$, which corresponds to the period from years $1891 -2007$ without the years $1910,$ $1911,$ $1993$ and $1995$.
%\begin{figure}[!ht]
%    \centering
 %   \includegraphics[scale=0.45]{}
%    \caption{\it This map shows the locations of the four stations.}
%\end{figure}

The change point estimator in \eqref{def_changepoint_estimator} gives  $\hat{\vartheta}_n = 0.708$, which approximately corresponds to the year $1969$.  We first investigate the fully functional approach in Section \ref{sec3}. The results of the test 
\eqref{hd7} for different thresholds and different nominal  level are given in Table
\ref{tab1}. We observe that $\Delta = 2.974 $
is the largest threshold  such that the test \eqref{hd7}  rejects the null hypothesis at nominal level $\alpha =0.05$.
Because there are $4$  stations this corresponds to an average effect of $ {2.974 /4} \approx 0.74$.  Finally, we note that the one-sided confidence interval  $\|\delta \|  \geq 0  $ 
in \eqref{one} is given by $\sqbr{0, 4.397}$
while the  two-sided interval  $\|\delta \| >0  $ in  \eqref{two} 
is obtained as $\sqbr{0.672,4.675}$
\begin{table}[!ht]
    \centering
    \begin{tabular}{c | c | c | c}
         $\Delta$ & 10 \% & 5 \% & 1 \%  \\
         \hline
         2.974 & reject & reject & accept\\
         2.975 & reject & accept & accept
    \end{tabular}
    \caption{\it
    Results of the test \eqref{hd7} for the Canadian weather data for different nominal level and different thresholds. The value $2.974$ represents  the maximal threshold \eqref{hd2a}, such that the null hypothesis of no relevant change is rejected at nominal level $5\%$.}
    \label{tab1}
\end{table}

\begin{figure}
    \centering
    \includegraphics[width=9cm,height=6cm]{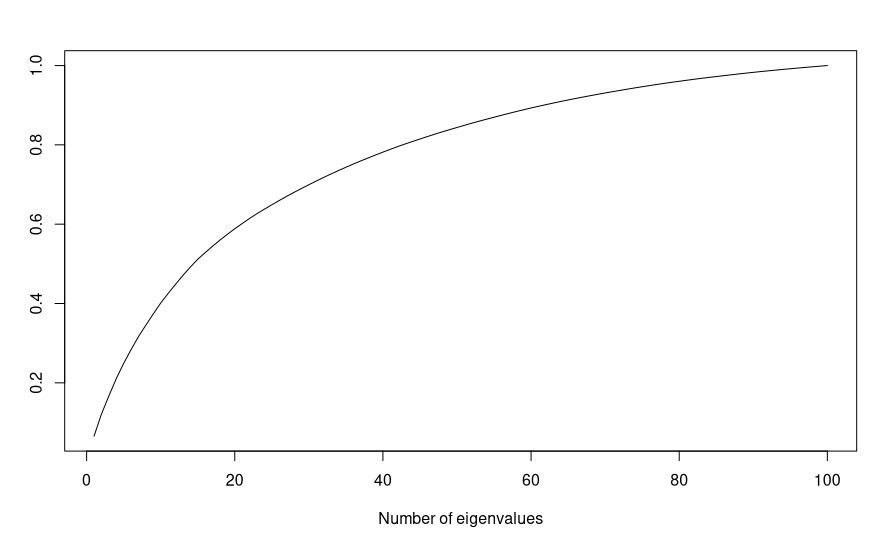}
    \caption{\it The function $e$ in \eqref{p1} for $j= 1,...,100$}
    \label{eigenvalues}
\end{figure}

Next we consider the test based on functional principal components developed in Section \ref{sec4}. 
In this case, the choice of 
$d$ is crucial and we display in Figure \ref{eigenvalues} the ratios
\begin{equation} \label{p1}
j  \mapsto e(j) :=
\frac{ \sum_{k=1}^j
\hat \lambda_k }{ {\rm trace }(\hat c_{\hat \vartheta_n} ) },
\end{equation}
where  $\hat \lambda_1, \hat \lambda_2 , \ldots $
are the eigenvalues of the estimated covariance operator \eqref{hd8a}. We observe that the eigenvalues are slowly decreasing, and we choose
$d=51$, which results in a value of $e(51) \approx 0.849 $ of explained variance.
The results of the test \eqref{disp:cp_mean_testrule} for the relevant hypotheses \eqref{hd9} are shown in Table \ref{tab2} for different values of $\Delta$ and $\alpha$. We observe that 
 the maximal threshold in \eqref{hd9}, such that the null hypothesis of no relevant change is rejected at nominal level $5\%$, is given by  
$4.467$.
Finally one and two-sided confidence intervals for the quantity $\big (\sum_{k=1}^d \innpr{\delta, w_k}^2\big )^{1/2} $ are obtained in same way as described in Remark \ref{remarkh0}
and are given by  $\sqbr{0, 3.254}$ and $\sqbr{1.877, 3.396}$, respectively  ($d=51$).

\begin{table}[!ht]
    \centering
    \begin{tabular}{c | c | c | c}
         $\Delta$ & 90 \% & 95 \% & 99 \%  \\
         \hline
         4.467 & reject & reject & accept\\
         4.468 & reject & accept & accept
    \end{tabular}
    \caption{\it 
        Results of the test \eqref{disp:cp_mean_testrule} with $d=51$ principal components for the Canadian weather data for different nominal level and different thresholds. The value $4.467$ represents  the maximal threshold in \eqref{hd9}, such that the null hypothesis of no relevant change is rejected at nominal level $5\%$.}
    \label{tab2}
\end{table}

\bigskip
\textbf{Acknowledgements}  
This research has been supported by the German Research Foundation (DFG), project number 45723897.

\bibliographystyle{apalike}
    \bibliography{lit}

\appendix
\section{Appendix: proofs} 
\label{sec6} 
  \def\theequation{A.\arabic{equation}}
\setcounter{equation}{0}
\subsection{Some preliminary results}

We start with some preparations and present several results which are used in the proof. We define for $s \in S$ and $t \in \sqbr{0,1}$ $$S_n (s, t, \lambda) := \frac{1}{n} \sum_{i=1}^{\gbr{\lambda n}} \br{X_i (s,t) - \mu (s,t)}$$
and state the following  result,
which can be  obtained by  generalizing Theorem 1 in \cite{berkes} to the space
$L^2 (S \times [0,1])$.
\begin{thm}\label{generalberkes}
{\it 
If 
 Assumption \ref{ass_2nd} is satisfied,
 there exists  a sequence  of Gaussian processes $\big ( \cbr{\Gamma_n (s, t, \lambda) \mid s \in S, 0 \leq \lambda, t \leq 1} \big )_{n \in \mathbb{N}}$, such that
    \begin{align*}
        \sup_{0 \leq \lambda \leq 1} \sum_{s \in S} \int \br{ \sqrt{n} S_n (s, t, \lambda) - \Gamma_n (s, t, \lambda)}^2 \mathrm{d}t = \sup_{0 \leq \lambda \leq 1} \norm{\sqrt{n} S_n (\cdot, \cdot, \lambda) - \Gamma_n (\cdot, \cdot, \lambda)}^2 = o_\p (1)
    \end{align*}
    and 
    $$\cbr{\Gamma_n (s, t, \lambda) \mid s \in S, 0 \leq \lambda, t \leq 1} \eqo{$\mathpcal{D}$} \cbr{\Gamma (s, t, \lambda) \mid s \in S, 0 \leq \lambda, t \leq 1},
    $$
    where
    $$\Gamma (s, t, \lambda) := \sum_{i=1}^\infty \sqrt{\lambda_i} \phi_i (s, t) W_i (\lambda),$$ 
    $\lambda_i, \phi_i$ are the eigenvalues and eigenvectors of the covariance operator of $X_j$ and $\{ W_i\}_{i \in \mathbb{N}}$ are independent  standard Brownian motions.
    }
\end{thm}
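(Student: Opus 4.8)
The plan is to reduce the statement to the functional invariance principle of \cite{berkes} (formulated for $L^2[0,1]$-valued Bernoulli shifts) by exploiting that $S$ is finite. Since $|S|<\infty$, the space $L^2(S\times[0,1])$ is isometrically isomorphic to the Hilbert-space direct sum $\bigoplus_{s\in S}L^2([0,1])$, hence a separable Hilbert space, so every ingredient of the proof in \cite{berkes} — the Karhunen--Lo\`eve expansion, the Hilbert-space central limit theorem, the maximal inequalities — carries over verbatim, with the extra (finite) spatial index absorbed into a finite sum. It therefore suffices to check that Assumption \ref{ass_2nd} supplies exactly the hypotheses required there. This is the case: (A1)--(A2) give the stationary Bernoulli-shift representation of the centered building blocks (the result is applied to $\{\eta_j^{(1)}\}_j$ on $\{1,\dots,\gbr{n\vartheta_0}\}$ and to $\{\eta_j^{(2)}\}_j$ on $\{\gbr{n\vartheta_0}+1,\dots,n\}$, the deterministic shift $\delta$ being deterministic and handled separately in the applications, so that the relevant increments are centered); (A3) gives the moment bound $\e\norm{\eta_0^{(\ell)}}^{2+\psi}<\infty$; and (A4) gives $m$-dependent approximability with the summability exponent $1/\kappa$, $\kappa>2+\psi$, that matches the condition of \cite{berkes}.

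The key steps I would carry out are the following. First, fix a truncation level $N$ and split $X_i-\mu=\Pi_N(X_i-\mu)+R_i^{(N)}$, where $\Pi_N$ is the projection onto the span of the first $N$ eigenfunctions $\phi_1,\dots,\phi_N$ of the covariance operator $C$ governing the limit (under dependence this is the long-run covariance of the underlying stationary Bernoulli shift, which exists by Assumption \ref{ass_2nd}). Second, on the $N$-dimensional coordinate process apply the finite-dimensional strong-approximation result of \cite{berkes} to obtain, on a suitable probability space, Gaussian processes $\Gamma_n^{(N)}$ with $\sup_{0\le\lambda\le1}\norm{\sqrt n S_n^{(N)}(\cdot,\cdot,\lambda)-\Gamma_n^{(N)}(\cdot,\cdot,\lambda)}^2=o_\p(1)$ for each fixed $N$, where $S_n^{(N)}$ is built from $\Pi_N(X_i-\mu)$. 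Third, control the tail: using a maximal (H{\'a}jek--R{\'e}nyi type) inequality for partial sums of $m$-approximable $L^2(S\times[0,1])$-valued sequences (a consequence of (A3)--(A4), as in \cite{berkes}) together with $\sum_{k>N}\lambda_k\to0$, show that $\limsup_n\p\big(\sup_\lambda\norm{\sqrt n(S_n-S_n^{(N)})(\cdot,\cdot,\lambda)}>\varepsilon\big)\to0$ as $N\to\infty$, and likewise that the tail of the limiting Gaussian process is negligible. Fourth, combine the second and third steps by the standard ``approximation of approximations'' argument (letting $N=N_n\to\infty$ slowly) to obtain a single sequence $\Gamma_n$ with $\sup_\lambda\norm{\sqrt n S_n(\cdot,\cdot,\lambda)-\Gamma_n(\cdot,\cdot,\lambda)}^2=o_\p(1)$. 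Finally, identify the law: the limit has independent increments in $\lambda$ and spatiotemporal covariance $C$, so diagonalizing $C=\sum_i\lambda_i\,\phi_i\otimes\phi_i$ yields $\Gamma(s,t,\lambda)=\sum_i\sqrt{\lambda_i}\,\phi_i(s,t)\,W_i(\lambda)$ with independent standard Brownian motions $W_i$.

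The main obstacle is the infinite-dimensional tail control in the third step: one must bound $\p\big(\sup_{k\le n}\norm{\sum_{i\le k}R_i^{(N)}}>\varepsilon\sqrt n\big)$ uniformly in $n$ by a quantity that vanishes as $N\to\infty$, which is precisely where the full strength of (A4) — the summability with $\kappa>2+\psi$ rather than mere $m$-approximability — enters and is the technical heart of \cite{berkes}; everything else is bookkeeping adapted to the product space $L^2(S\times[0,1])$. Two secondary points also need care: realizing the Gaussian approximations for the various truncation levels on a common probability space (or, alternatively, passing to a single limit via tightness in $\ell^\infty([0,1])$ and a diagonal argument), and verifying that replacing the piecewise-constant partial-sum process by the continuous Brownian limit contributes only $o_\p(1)$ uniformly in $\lambda$.
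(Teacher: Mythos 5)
The paper proves this theorem only by the one-line remark that it ``can be obtained by generalizing Theorem 1 in \cite{berkes} to the space $L^2(S\times[0,1])$,'' and your sketch (finite $S$ absorbed into a Hilbert-space direct sum, truncation onto finitely many eigenfunctions of the long-run covariance, Berkes--Horv\'ath--Rice's finite-dimensional coupling, tail control from (A3)--(A4), diagonal argument, identification of the limit law) is a correct expansion of precisely that generalization. You also rightly note that the statement must be read as applying to the stationary centered pieces $\eta_j^{(\ell)}$ rather than to $X_i-\mu$ literally (when $\delta\neq 0$ the partial sum of $X_i-\mu$ carries a deterministic $\sqrt{n}$-order drift past $\vartheta_0$ that no mean-zero Gaussian limit can absorb) and that $\lambda_i,\phi_i$ must be eigenelements of the \emph{long-run} covariance operator; both are implicit corrections to the paper's loose wording that match exactly how Theorem~\ref{generalberkes} is actually invoked in the proof of Lemma~\ref{lem:approx_for_cp_in_summation}.
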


Our next auxiliary result quantifies the difference  between the processes 
$\{D_n (\lambda, \vartheta_0)\}_{\lambda \in [0,1]} $ and
$\{D_n (\lambda, \hat{\vartheta}_n)\}_{\lambda \in [0,1]} $ as $n \to \infty$.

\begin{lem}\label{lem:approx_for_cp_in_summation}
{\it 
    If Assumption \ref{ass_2nd} is satisfied, then
    \begin{enumerate}
        \item[(1)] $\displaystyle \sup_{0 \leq \lambda \leq 1} \norm{D_n (\lambda, \vartheta)} = O_\p (1)$ for $\vartheta \in \{ \vartheta_0, \hat{\vartheta}_n \}$ ,
        \item[(2)]  $\displaystyle \sup_{0 \leq \lambda \leq 1} \norm{D_n (\lambda, \hat{\vartheta}_n) - D_n (\lambda, \vartheta_0) } = o_\p (n^{-1/2})$.
    \end{enumerate}
    }
\end{lem}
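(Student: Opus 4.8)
My plan is to establish part (2) and part (1) for $\vartheta=\vartheta_0$ first, and then to deduce part (1) for $\vartheta=\hat\vartheta_n$ from the triangle inequality
$$\sup_{0\le\lambda\le1}\norm{D_n(\lambda,\hat\vartheta_n)}\ \le\ \sup_{0\le\lambda\le1}\norm{D_n(\lambda,\vartheta_0)}+\sup_{0\le\lambda\le1}\norm{D_n(\lambda,\hat\vartheta_n)-D_n(\lambda,\vartheta_0)},$$
so that only the first of these two suprema and the statement (2) carry substance. Throughout I write $k_0:=\gbr{n\vartheta_0}$, $\hat k:=\gbr{n\hat\vartheta_n}$, and I use that (A1) gives $X_j=\mu+\eta_j^{(1)}$ for $j\le k_0$ and $X_j=\mu+\delta+\eta_j^{(2)}$ for $j>k_0$. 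For (1) with $\vartheta_0$ I would split $D_n(\lambda,\vartheta_0)$ from \eqref{hd3} into the deterministic term $\big(\tfrac{\gbr{\lambda k_0}}{k_0}-\tfrac{\gbr{\lambda(n-k_0)}}{n-k_0}\big)\mu-\tfrac{\gbr{\lambda(n-k_0)}}{n-k_0}\delta$, which has norm $\le2\norm{\mu}+\norm{\delta}$ uniformly in $\lambda$, and the stochastic term $\tfrac1{k_0}\sum_{j=1}^{\gbr{\lambda k_0}}\eta_j^{(1)}-\tfrac1{n-k_0}\sum_{j=k_0+1}^{k_0+\gbr{\lambda(n-k_0)}}\eta_j^{(2)}$, whose supremum over $\lambda$ is at most $\tfrac1{k_0}\sup_{m\le n}\norm{\sum_{j=1}^m\eta_j^{(1)}}+\tfrac1{n-k_0}\sup_{m\le n}\norm{\sum_{j=1}^m\eta_j^{(2)}}$; by Theorem \ref{generalberkes} applied to the stationary sequences $\{\eta_j^{(\ell)}\}$ the two partial--sum suprema are $O_{\p}(\sqrt n)$, and since $\vartheta_0\in(\varepsilon,1-\varepsilon)$ entails $k_0\asymp n\asymp n-k_0$, the stochastic term is $O_{\p}(n^{-1/2})$. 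Hence $\sup_\lambda\norm{D_n(\lambda,\vartheta_0)}=O_{\p}(1)$.

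For (2) I would first fix a deterministic sequence $C_n\to\infty$ with $C_n/\sqrt n\to0$ such that $\p(|\hat k-k_0|>C_n)\to0$; this is possible because \eqref{prop:changepoint_conv_rate} yields $|\hat k-k_0|\le n|\hat\vartheta_n-\vartheta_0|+1=o_{\p}(\sqrt n)$. Since the event $\mathcal A_n:=\{|\hat k-k_0|\le C_n\}$ has probability tending to one, it suffices to bound $\sup_\lambda\norm{D_n(\lambda,\hat\vartheta_n)-D_n(\lambda,\vartheta_0)}$ on $\mathcal A_n$. Writing $X_j=\mu+\delta\,\mathbbm1\{j>k_0\}+\xi_j$ with $\xi_j:=\eta_j^{(1)}\mathbbm1\{j\le k_0\}+\eta_j^{(2)}\mathbbm1\{j>k_0\}$, I split $D_n(\lambda,k/n)$ into a $\mu$--part, a $\delta$--part and a $\xi$--part. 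Replacing $k_0$ by $\hat k$ changes the $\mu$--part by $O(1/n)$ uniformly in $\lambda$ (since $|\gbr{\lambda k}/k-\lambda|\le1/k$), and changes the $\delta$--part by $O(C_n/n)$ uniformly in $\lambda$ on $\mathcal A_n$ (the two $\delta$--contributions differ only through the at most $C_n$ indices lying between $k_0$ and $\hat k$, together with $O(1/n)$ fractional--part errors); both are $o(n^{-1/2})$, so only the $\xi$--part remains.

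For the $\xi$--part, write the noise piece of $D_n(\lambda,k/n)$ as $\widetilde D_n(\lambda,k):=\tfrac1k\sum_{j=1}^{\gbr{\lambda k}}\xi_j-\tfrac1{n-k}\sum_{j=k+1}^{k+\gbr{\lambda(n-k)}}\xi_j$. For the first mean I would use, on $\mathcal A_n$,
$$\norm{\tfrac1{\hat k}\sum_{j=1}^{\gbr{\lambda\hat k}}\xi_j-\tfrac1{k_0}\sum_{j=1}^{\gbr{\lambda k_0}}\xi_j}\ \le\ \Big|\tfrac1{\hat k}-\tfrac1{k_0}\Big|\,\norm{\sum_{j=1}^{\gbr{\lambda k_0}}\xi_j}+\tfrac1{\hat k}\,\norm{\sum_{j\in W(\lambda)}\xi_j},$$
where $W(\lambda)$ denotes the integer interval with endpoints $\gbr{\lambda k_0}$, $\gbr{\lambda\hat k}$, so $|W(\lambda)|\le C_n+1$ and $W(\lambda)\subseteq[1,n]$. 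Here $\big|\tfrac1{\hat k}-\tfrac1{k_0}\big|=O(C_n/n^2)$ and $\sup_{m\le n}\norm{\sum_{j=1}^m\xi_j}=O_{\p}(\sqrt n)$ (the partial sums of $\xi$ agree with those of $\eta^{(1)}$ up to index $k_0$ and then grow like those of $\eta^{(2)}$, both $O_{\p}(\sqrt n)$ by Theorem \ref{generalberkes}), so the first term is $O_{\p}(C_n/n^{3/2})=o_{\p}(n^{-1/2})$; for the second term the decisive estimate is the modulus--of--continuity bound
$$G_n:=\sup\big\{\norm{\textstyle\sum_{j=a+1}^{a+b}\xi_j}\ :\ 0\le a,\ a+b\le n,\ 1\le b\le C_n+1\big\}=o_{\p}(\sqrt n),$$
which gives $\tfrac1{\hat k}\sup_\lambda\norm{\sum_{j\in W(\lambda)}\xi_j}\le G_n/\hat k=o_{\p}(n^{-1/2})$. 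To prove $G_n=o_{\p}(\sqrt n)$ I would invoke Theorem \ref{generalberkes} again: after scaling by $n^{-1/2}$ the partial--sum processes of $\eta^{(1)}$ and $\eta^{(2)}$ are uniformly within $o_{\p}(1)$ of Gaussian processes with almost surely continuous sample paths in $L^2(S\times[0,1])$, whose moduli of continuity at the vanishing scale $(C_n+1)/n$ tend to zero a.s., so increments over index windows of length $\le C_n+1$ are $o_{\p}(\sqrt n)$. The second mean $\tfrac1{n-k}\sum_{j=k+1}^{k+\gbr{\lambda(n-k)}}\xi_j$ is treated identically, the only difference being that the relevant index windows then have length $\le3C_n+1$ (both the lower endpoint $k+1$ and the upper endpoint $k+\gbr{\lambda(n-k)}$ move by $O(C_n)$ when $k_0$ is replaced by $\hat k$). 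Collecting the bounds yields $\sup_\lambda\norm{D_n(\lambda,\hat\vartheta_n)-D_n(\lambda,\vartheta_0)}\mathbbm1_{\mathcal A_n}=o_{\p}(n^{-1/2})$, hence (2); and then (1) for $\hat\vartheta_n$ follows from the triangle inequality above, whose right--hand side is $O_{\p}(1)+o_{\p}(n^{-1/2})=O_{\p}(1)$.

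The only genuinely non--routine ingredient is the maximal (modulus--of--continuity) estimate $G_n=o_{\p}(\sqrt n)$ for increments of the weakly dependent, Hilbert--space valued partial--sum processes over windows of length $o(\sqrt n)$; once this is available via Theorem \ref{generalberkes} the remaining work is bookkeeping — keeping track of the $O(C_n)$ ``contaminated'' indices near $k_0$ and of the $O(1/n)$ fractional--part errors.
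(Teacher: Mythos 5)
Your proof is correct and rests on exactly the same two pillars as the paper's: the Hilbert-space Gaussian strong approximation of partial sums (Theorem \ref{generalberkes}) and the vanishing modulus of continuity of the limiting Gaussian process over windows of length $o(1)$ (the paper cites this as \eqref{disp:sup_gamma_cont} from \cite{dette}; you rederive it inline). The remaining differences are bookkeeping: you condition on a high-probability event $\mathcal{A}_n$ with a deterministic window length $C_n=o(\sqrt n)$ and explicitly track the noise $\xi_j=\eta_j^{(1)}\mathbbm{1}\{j\le k_0\}+\eta_j^{(2)}\mathbbm{1}\{j>k_0\}$ (which cleanly handles the labels in the region between $k_0$ and $\hat k$), whereas the paper works directly with the random $|\hat\vartheta_n-\vartheta_0|$ and, for part (1), invokes the sharper self-normalized maximal inequality \eqref{disp:aue_lemma_b1} from \cite{Aue2015DatingSB} to get $\sup_\lambda\|\tilde D_n(\lambda,\vartheta_0)\|=o_\p(1)$, a stronger conclusion than the $O_\p(1)$ the lemma actually needs and that your cruder $O_\p(\sqrt n)$ bound already delivers.
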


\begin{proof}
    We can assume that $\frac{1}{n} \leq \lambda$, because, by definition, $D_n (\lambda, \vartheta_0) \equiv D_n (\lambda, \hat{\vartheta}_n ) \equiv0$  
    if $0 \leq \lambda < \frac{1}{n}$,
    and both assertions are trivially true.

For a proof of part (1) we note  that for some $\kappa > 0$
        \begin{align}\label{disp:aue_lemma_b1}
            \sup_{\frac{1}{n} \leq \lambda \leq 1} \frac{1}{\sqrt{\gbr{\lambda n}}} \norm{\sum_{j=1}^{\gbr{\lambda n}} \eta_j^{(\ell)}} = O_\p (\log^{1/\kappa} (n))~~~(\ell =1,2),
        \end{align}
       which follows from Lemma B.1 in  the online supplementary material of \cite{Aue2015DatingSB}.
        Let $\lambda \in \sqbr{1/n, 1}$. It suffices to show the assertion for $\vartheta = \vartheta_0$, because the second part of the Lemma implies the statement for $\vartheta = \hat \vartheta_n$. Then 
        $$\norm{D_n (\lambda, \vartheta_0)} \leq \norm{\tilde{D}_n (\lambda, \vartheta_0)} + \lambda \norm{\delta} + o_\p (1) = \norm{\tilde{D}_n (\lambda, \vartheta_0)} + O_\p (1),
        $$
        where the process  $\{\tilde{D}_n (\lambda, \vartheta) \}_{\lambda \in [0,1]}$  in $L^2 (S \times \sqbr{0,1})$ is defined by 
       \begin{align*}
    \tilde{D}_n (\lambda, \vartheta) & := \frac{1}{\gbr{n \vartheta}} \sum_{j=1}^{\gbr{\lambda \gbr{n \vartheta}}} \eta^{(1)}_j  - \frac{1}{n - \gbr{n \vartheta}} \sum_{j=\gbr{n \vartheta} + 1}^{\gbr{n \vartheta} + \gbr{\lambda (n - \gbr{n \vartheta})}} \eta^{(2)}_j .
\end{align*}
Note that $\tilde  D_n(\lambda, \vartheta)$ is the centered version of  ${D}_n (\lambda, \vartheta)$  defined in \eqref{hd3} and   that  
\begin{equation} \label{hd20}
    D_n (\lambda, \vartheta_0) + \lambda \delta = \tilde{D}_n (\lambda, \vartheta_0) + O_\p (n^{-1})
    \end{equation}
    (note that  $\vartheta_0 \in (\varepsilon, 1-\varepsilon)$ and that 
    $\varepsilon \geq \frac{1}{n}$ if $n$ is sufficiently large).
        We have
        \begin{align*}
            \sup_{\frac{1}{n} \leq \lambda \leq 1} \norm{\tilde{D}_n (\lambda, \vartheta_0)} \leq \sup_{\frac{1}{n} \leq \lambda \leq 1} \frac{1}{\gbr{n \vartheta_0}} \norm{\sum_{j=1}^{\gbr{\lambda \gbr{n \vartheta_0}}} \eta_j^{(1)} } + \sup_{\frac{1}{n} \leq \lambda \leq 1} \frac{1}{n - \gbr{n \vartheta_0}} \norm{\sum_{j=\gbr{n \vartheta_0} +1 }^{\gbr{n \vartheta_0} + \gbr{\lambda (n - \gbr{n \vartheta_0})}} \eta_j^{(2)} },
        \end{align*}
        where  the first term 
     can be estimated as follows
        \begin{align*}
            \sup_{\frac{1}{n} \leq \lambda \leq 1} \frac{1}{\gbr{n \vartheta_0}} \norm{\sum_{j=1}^{\gbr{\lambda \gbr{n \vartheta_0}}} \eta_j^{(1)} } &\leq \frac{1}{\sqrt{\gbr{n \vartheta_0}}} \sup_{\frac{1}{n} \leq \lambda \leq 1} \frac{1}{\sqrt{\gbr{\lambda \gbr{n \vartheta_0}}}} \norm{\sum_{j=1}^{\gbr{\lambda \gbr{n \vartheta_0}}} \eta_j^{(1)} }\\
            &\leq \frac{1}{\sqrt{\gbr{n \vartheta_0}}} \sup_{\frac{1}{n} \leq \lambda \leq 1} \frac{1}{\sqrt{\gbr{\lambda n}}} \norm{\sum_{j=1}^{\gbr{\lambda n}} \eta_j^{(1)} } = O_\p \br{\frac{\log^{1/\kappa} (n)}{\sqrt{n}}} = o_\p (1).
        \end{align*}
       (here in the second inequality, we expanded the set over which we take the supremum and in the last step, we used the estimate \eqref{disp:aue_lemma_b1}).
     A  similar argument provides the same rate for the second term, which proves the part (1) of  Lemma \ref{lem:approx_for_cp_in_summation}.
        
        %%%%%%%%%%%%%
        
   For the proof of the second assertion  we need the following (slightly more general) statement from the beginning of Section  B.1 in \cite{dette}, which states that  the sequence of processes $\cbr{\Gamma_n (s, t, \lambda) \mid s \in S, 0 \leq t, \lambda \leq 1}_{n \in \mathbb{N}}$
   in Theorem \ref{generalberkes} satisfies 
        \begin{align}\label{disp:sup_gamma_cont}
            \sup_{\substack{\nu, \lambda \in \sqbr{0,1}:\\ |\nu - \lambda| \leq \kappa_n}} \norm{\Gamma_n (\cdot, \cdot, \nu) - \Gamma_n (\cdot, \cdot, \lambda)}^2 = o_\p (1).
        \end{align}
        for any and  positive sequence $\br{\kappa_n}_{n \in \mathbb{N}}$ with $\kappa_n \to 0$.
        By adding and subtracting $\lambda \delta$, we have $D_n (\lambda, \hat{\vartheta}_n) - D_n (\lambda, \vartheta_0) = \tilde{D}_n (\lambda, \hat{\vartheta}_n) - \tilde{D}_n (\lambda, \vartheta_0) + O_\p (n^{-1})$, where 
        \begin{align*}
            \tilde{D}_n (\lambda, \hat{\vartheta}_n) - \tilde{D}_n (\lambda, \vartheta_0) &= \frac{1}{\gbr{n \hat{\vartheta}_n}} \sum_{j=1}^{\gbr{\lambda \gbr{n \hat{\vartheta}_n}}} \eta_j^{(1)} - \frac{1}{\gbr{n \vartheta_0}} \sum_{j=1}^{\gbr{\lambda \gbr{n \vartheta_0}}} \eta_j^{(1)}\\
            &\pppad+ \frac{1}{n - \gbr{n \vartheta_0}} \sum_{j=\gbr{n \vartheta_0} + 1}^{\gbr{n \vartheta_0} + \gbr{\lambda (n - \gbr{n \vartheta_0}) }} \eta_j^{(2)} - \frac{1}{n - \gbr{n \hat{\vartheta}_n}} \sum_{j=\gbr{n \hat{\vartheta}_n} + 1}^{\gbr{n \hat{\vartheta}_n} + \gbr{\lambda (n - \gbr{n \hat{\vartheta}_n}) }} \eta_j^{(2)}~.
        \end{align*}
    For the   first difference
    we obtain by a similar argument as in the proof of part (1) that 
        \begin{align*}
            &\sup_{\frac{1}{n} \leq \lambda \leq 1} \sqrt{n} \norm{\frac{1}{\gbr{n \hat{\vartheta}_n}} \sum_{j=1}^{\gbr{\lambda \gbr{n \hat{\vartheta}_n}}} \eta_j^{(1)} - \frac{1}{\gbr{n \vartheta_0}} \sum_{j=1}^{\gbr{\lambda \gbr{n \vartheta_0}}} \eta_j^{(1)} }\\
            &\pppad \pppad \pppad \leq \frac{n}{\gbr{n \hat{\vartheta}_n}} \sup_{\frac{1}{n} \leq \lambda \leq 1} \norm{\frac{1}{\sqrt{n}}  \sum_{j=1}^{\gbr{\lambda \gbr{n \hat{\vartheta}_n}}} \eta_j^{(1)} - \Gamma_n \br{\cdot, \cdot, \lambda \tfrac{\gbr{n \hat{\vartheta}_n}}{n}}} \\
            &\pppad\pppad\pppad 
            + \frac{n}{\gbr{n \vartheta_0}} \sup_{\frac{1}{n} \leq \lambda \leq 1} \norm{\frac{1}{\sqrt{n}}  \sum_{j=1}^{\gbr{\lambda \gbr{n \vartheta_0}}} \eta_j^{(1)} - \Gamma_n \br{\cdot, \cdot, \lambda \tfrac{\gbr{n \vartheta_0}}{n}}} \\
            &\pppad\pppad \pppad + \frac{n}{\gbr{n \hat{\vartheta}_n}} \sup_{\frac{1}{n} \leq \lambda \leq 1}  \norm{\Gamma_n \br{\cdot, \cdot, \lambda \tfrac{\gbr{n \hat{\vartheta}_n}}{n} } - \Gamma_n (\cdot, \cdot, \lambda \hat{\vartheta}_n)}
            \\
            &\pppad \pppad \pppad + \frac{n}{\gbr{n\vartheta_0}} \sup_{\frac{1}{n} \leq \lambda \leq 1} \norm{\Gamma_n \br{\cdot, \cdot, \lambda \tfrac{\gbr{n \vartheta_0}}{n}} - \Gamma_n (\cdot, \cdot, \lambda \vartheta_0)}\\
            &\pppad\pppad \pppad+ \sup_{\frac{1}{n} \leq \lambda \leq 1} \norm{ \frac{n}{\gbr{n \hat{\vartheta}_n}} \Gamma_n (\cdot, \cdot, \lambda \hat{\vartheta}_n) - \frac{n}{\gbr{n \vartheta_0}} \Gamma_n (\cdot, \cdot, \lambda \vartheta_0)}\\
            &\pppad \pppad\pppad \leq \sup_{\substack{\nu, \lambda \in \sqbr{0,1}:\\ |\nu - \lambda| \leq \frac{1}{n}}} \norm{\Gamma_n (\cdot, \cdot, \nu) - \Gamma_n (\cdot, \cdot, \lambda)}  \\
            &\pppad\pppad \pppad + O(1) \negthickspace \negthickspace \sup_{\substack{\nu, \lambda \in \sqbr{0,1}:\\|\nu - \lambda| \leq |\hat{\vartheta}_n - \vartheta_0|}} \norm{\Gamma_n (\cdot, \cdot, \nu) - \Gamma_n (\cdot, \cdot, \lambda) } + \frac{|\vartheta_0 - \hat{\vartheta}_n|}{\vartheta_0 \hat{\vartheta}_n} O_\p (1) +  o_\p (1)\\
            &\pppad \pppad\pppad = o_\p (1),
        \end{align*}
        where we have used (\ref{disp:sup_gamma_cont}) in the last equality.
       Assertion (2) of Lemma \ref{lem:approx_for_cp_in_summation} now follows by a similar argument for the second difference.
\end{proof}

We conclude our preparations recalling the definition of the inner product in \eqref{det1}
  and state  a lemma regarding the weak convergence of the process
     \begin{align}
       \label{hd11a}  
        \br{{Z}_n^{(1)}, {Z}_n^{(2)}}^\top & :=
       \big \{ (  {Z}_n^{(1)} (\lambda),   
        {Z}_n^{(2)} (\lambda) )^\top \big \}_{\lambda \in [0,1]}  \\
        & \nonumber 
        := \Big \{  \Big ( \frac{1}{\sqrt{n}} \sum_{i=1}^{\gbr{n \lambda}} \br{\innpr{\eta_i^{(k)} \otimes \eta_i^{(k)} - c^{(k)}{}, \xi_k}_2 + \innpr{\eta_i^{(k)}, \zeta_k}} \Big )_{k=1,2} \Big \}_{\lambda \in [0,1]}  ~, 
    \end{align}
    where 
  $\{\eta^{(1)}_j  \}_{j\in \mathbb{Z}}$ and  $\{\eta^{(2)}_j \}_{j \in \mathbb{Z}}$ are centered processes  in  % $L^2 ((S \times \sqbr{0,1})^2)$  and
  $ L^2 (S \times \sqbr{0,1})$ %, respectively, 
  and 
  $\zeta_1, \zeta_2 \in L^2 (S \times \sqbr{0,1})$, $\xi_1, \xi_2 \in L^2 ((S \times \sqbr{0,1})^2)$ are given functions.
 We emphasize  that we consider the process $    ( {Z}_n^{(1)}, {Z}_n^{(2)})^\top $ with 
different   parameters $\zeta_1, \zeta_2 \in L^2 (S \times \sqbr{0,1})$, $\xi_1, \xi_2 \in L^2 ((S \times \sqbr{0,1})^2)$ in the proofs of the results of Section \ref{sec3} and \ref{sec4}.
 The proof of the following result is similar  to  the proof of Lemma B.1 in \cite{dette} and  therefore omitted.

\begin{lem}\label{general_lemma_b1}
{\it 
    Let $\zeta_1, \zeta_2 \in L^2 (S \times \sqbr{0,1})$, $\xi_1, \xi_2 \in L^2 ((S \times \sqbr{0,1})^2)$ be fixed but arbitrary functions and let 
    $\{\eta^{(1)}_j  \}_{j\in \mathbb{Z}},$ $\{\eta^{(2)}_j \}_{j \in \mathbb{Z}}$
    denote  centered processes  satisfying Assumption \ref{ass_4th}.
    Then the process defined in \eqref{hd11a}
    converges weakly in $\ell^{\infty} (\sqbr{0,1})^2$, that is
 %   \begin{align*}
 %       {Z}_n^{(k)} (\lambda) := \frac{1}{\sqrt{n}} \sum_{i=1}^{\gbr{n \lambda}} \br{\innpr{\eta_i^{(k)} \otimes \eta_i^{(k)}, \xi_k}_2 + \innpr{\eta_i^{(k)}, \zeta_k}}
 %   \end{align*}
 %   for $k=1,2$. Then we have the convergence in 
    \begin{align} \label{hd11b}
        \br{{Z}_n^{(1)}, {Z}_n^{(2)}}^\top \convproc \Sigma^{1/2} \br{\mathbb{B}_1, \mathbb{B}_2}^\top ,
    \end{align}
    where 
    $\mathbb{B}_1, \mathbb{B}_2$ are independent Brownian motions and $\Sigma  =(\Sigma_{kl})_{k,l=1,2}$ is a $2 \times 2$ matrix with entries
    \begin{align} \nonumber
        \Sigma_{kl} = \sum_{i\in \mathbb{Z}} &\bigg(\cov (\innpr{\eta_0^{(k)} \otimes \eta_0^{(k)}, \xi_k}_2, \innpr{\eta_i^{(l)} \otimes \eta_i^{(l)}, \xi_l}_2) + \cov (\innpr{\eta_0^{(k)} \otimes \eta_0^{(k)}, \xi_k}_2, \innpr{\eta_i^{(l)}, \zeta_l})\\
        & \pad  +\cov (\innpr{\eta_0^{(k)}, \zeta_k}, \innpr{\eta_i^{(l)} \otimes \eta_i^{(l)}, \xi_l}_2) + \cov (\innpr{\eta_0^{(k)}, \zeta_k}, \innpr{\eta_i^{(l)}, \zeta_l}) \bigg).
        \label{hd19}
    \end{align}
Moreover, in the  case  $\xi_1 \equiv \xi_2 \equiv 0$  Assumption \ref{ass_2nd} instead of Assumption \ref{ass_4th} is sufficient for  the weak convergence in \eqref{hd11b}.
}
\end{lem}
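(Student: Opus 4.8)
The plan is to recognise $(Z_n^{(1)},Z_n^{(2)})^\top$ as the partial--sum process of a weakly dependent, $\mathbb{R}^2$-valued stationary sequence and then run a multivariate invariance principle, following the same line of argument as the proof of Lemma B.1 in \cite{dette}. Set, for $k=1,2$ and $i\in\mathbb{Z}$,
\[
Y_i^{(k)} := \innpr{\eta_i^{(k)}\otimes\eta_i^{(k)} - c^{(k)},\xi_k}_2 + \innpr{\eta_i^{(k)},\zeta_k},
\]
so that $Z_n^{(k)}(\lambda)=n^{-1/2}\sum_{i=1}^{\gbr{n\lambda}}Y_i^{(k)}$. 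Since $\mathrm{E}[\eta_i^{(k)}\otimes\eta_i^{(k)}]=c^{(k)}$ and the $\eta_i^{(k)}$ are centered, each $\{Y_i^{(k)}\}_{i\in\mathbb{Z}}$ is a centered stationary real sequence, and by (A1)--(A2) the pair $\{(Y_i^{(1)},Y_i^{(2)})\}_{i\in\mathbb{Z}}$ is a Bernoulli shift of the common i.i.d. sequence $\{\varepsilon_j\}_{j\in\mathbb{Z}}$.

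First I would check that this bivariate sequence inherits the required integrability and $m$-approximability from (A3')--(A4'); this is the only genuinely new ingredient compared with \cite{dette}. Writing $Y_{i,m}^{(k)}$ for the version built from $\eta_{i,m}^{(k)}$, the bilinear identity $\eta\otimes\eta-\tilde\eta\otimes\tilde\eta=(\eta-\tilde\eta)\otimes\eta+\tilde\eta\otimes(\eta-\tilde\eta)$ together with the Cauchy--Schwarz bound $|\innpr{f\otimes g,\xi_k}_2|\le\norm{\xi_k}_2\norm{f}\,\norm{g}$ gives
\[
|Y_0^{(k)}-Y_{0,m}^{(k)}| \le \norm{\xi_k}_2\,\norm{\eta_0^{(k)}-\eta_{0,m}^{(k)}}\br{\norm{\eta_0^{(k)}}+\norm{\eta_{0,m}^{(k)}}} + \norm{\zeta_k}\,\norm{\eta_0^{(k)}-\eta_{0,m}^{(k)}}.
\]
Applying Hölder's inequality (so that the $\norm{\eta_0^{(k)}}$-type factors are raised to the power $4+\psi$ allowed by (A3')) and then Lyapunov's inequality, one gets, for $\psi':=\psi/2$ and $\kappa':=(2+\psi')\kappa/(4+\psi)>2+\psi'$, a bound of the form $\mathrm{E}|Y_0^{(k)}-Y_{0,m}^{(k)}|^{2+\psi'}\le C\,(\mathrm{E}\norm{\eta_0^{(k)}-\eta_{0,m}^{(k)}}^{4+\psi})^{(2+\psi')/(4+\psi)}$, whence $\sum_{m\ge1}(\mathrm{E}|Y_0^{(k)}-Y_{0,m}^{(k)}|^{2+\psi'})^{1/\kappa'}<\infty$ by (A4'); moreover $\mathrm{E}|Y_0^{(k)}|^{2+\psi'}<\infty$ is immediate from (A3'). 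Thus $\{(Y_i^{(1)},Y_i^{(2)})\}_{i\in\mathbb{Z}}$ is a centered stationary $\mathbb{R}^2$-valued Bernoulli shift that is $L^{2+\psi'}$-approximable by the $m$-dependent sequences $\{(Y_{i,m}^{(1)},Y_{i,m}^{(2)})\}_{i\in\mathbb{Z}}$ with summable coefficients.

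Next I would establish the invariance principle. Joint tightness of $(Z_n^{(1)},Z_n^{(2)})^\top$ is equivalent to tightness of each coordinate, so by the Cramér--Wold device it suffices to show, for every $a=(a_1,a_2)^\top\in\mathbb{R}^2$, that $\big\{n^{-1/2}\sum_{i=1}^{\gbr{n\lambda}}(a_1 Y_i^{(1)}+a_2 Y_i^{(2)})\big\}_{\lambda\in[0,1]}\convproc\sqrt{a^\top\Sigma a}\,\mathbb{B}$. The linear combination $a_1 Y_i^{(1)}+a_2 Y_i^{(2)}$ is again a centered stationary $L^{2+\psi'}$-approximable Bernoulli shift, so the scalar invariance principle goes through exactly as in Lemma B.1 of \cite{dette} ($m$-dependent approximation, the CLT and a moment/maximal inequality for $m$-dependent sequences, then $m\to\infty$), with long-run variance $\sum_{h\in\mathbb{Z}}\cov(a_1 Y_0^{(1)}+a_2 Y_0^{(2)},a_1 Y_h^{(1)}+a_2 Y_h^{(2)})=a^\top\Sigma a$, where $\Sigma_{kl}=\sum_{h\in\mathbb{Z}}\cov(Y_0^{(k)},Y_h^{(l)})$ and the series converges by the preceding step. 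Since $c^{(k)}$ is deterministic it drops out of the covariances, and expanding $Y_i^{(k)}=\innpr{\eta_i^{(k)}\otimes\eta_i^{(k)},\xi_k}_2+\innpr{\eta_i^{(k)},\zeta_k}$ into the four resulting cross terms reproduces precisely formula \eqref{hd19}. Finally, a bivariate Brownian motion with covariance matrix $\Sigma$ is represented as $\Sigma^{1/2}(\mathbb{B}_1,\mathbb{B}_2)^\top$ with independent standard Brownian motions, which is \eqref{hd11b}.

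For the last assertion, when $\xi_1\equiv\xi_2\equiv0$ one has $Y_i^{(k)}=\innpr{\eta_i^{(k)},\zeta_k}$, so $|Y_0^{(k)}|\le\norm{\zeta_k}\norm{\eta_0^{(k)}}$ and $|Y_0^{(k)}-Y_{0,m}^{(k)}|\le\norm{\zeta_k}\norm{\eta_0^{(k)}-\eta_{0,m}^{(k)}}$; hence $\mathrm{E}|Y_0^{(k)}|^{2+\psi}<\infty$ and $\sum_{m\ge1}(\mathrm{E}|Y_0^{(k)}-Y_{0,m}^{(k)}|^{2+\psi})^{1/\kappa}<\infty$ with $\kappa>2+\psi$ follow directly from (A3) and (A4), which is all the scalar invariance principle requires, so Assumption \ref{ass_2nd} already suffices. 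The main obstacle will be the bookkeeping in the second paragraph --- pushing the fourth--order moment and $m$-dependence conditions (A3')--(A4') through the quadratic functional $\innpr{\eta\otimes\eta,\xi_k}_2$ to produce an $L^{2+\psi'}$ approximation with summable coefficients; once this is in place the rest is a routine re-run of the scalar invariance principle of \cite{dette} combined with the Cramér--Wold device and the identification of the limiting covariance.
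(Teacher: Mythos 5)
Your argument follows exactly the route the paper indicates (reducing to the scalar invariance principle of Lemma B.1 in \cite{dette} via the Cram\'er--Wold device, after checking that the scalar sequences $Y_i^{(k)}$ inherit the required moment and $m$-approximation conditions), and the bookkeeping in your second paragraph — in particular the choice $\psi'=\psi/2$ and $\kappa'=\kappa/2>2+\psi'$ obtained from the bilinear decomposition, Cauchy--Schwarz, and H\"older — is correct. This matches the intended proof, which the paper omits as being analogous to \cite{dette}.
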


\subsection{Proof of Theorem \ref{thm:cp_spatiotemporal_mean}}
 
For a proof of the weak convergence in \eqref{hd6a} we use \eqref{hd20} and  several applications of the Cauchy-Schwarz inequality, which give 
            \begin{align*}
                \mathbb{U}_n (\lambda) &:= \sqrt{n} \br{\norm{D_n (\lambda, \vartheta_0)}^2 - \lambda^2 \norm{\delta}^2} = \tfrac{1}{\sqrt{n}} \norm{\sqrt{n} \tilde{D}_n (\lambda, \vartheta_0)}^2 + 2\lambda \innpr{\sqrt{n} \tilde{D}_n (\lambda, \vartheta_0), \delta} + o_\p (1)\\
                &= 2\lambda \innpr{\sqrt{n} \tilde{D}_n (\lambda, \vartheta_0), \delta} + o_\p (1),
            \end{align*}
            since $\norm{\sqrt{n} \tilde{D}_n (\lambda, \vartheta_0)}^2 = O_\p (1)$ by Theorem \ref{generalberkes}. Next define for $\ell=1,2$ and $\lambda \in \sqbr{0,1}$
            \begin{align*}
                {Z}^{(\ell)}_n (\lambda) := \frac{1}{\sqrt{n}} \sum_{j=1}^{\gbr{\lambda n}} \innpr{\eta_j^{(\ell)}, \delta},
            \end{align*}
            in order to rewrite
            \begin{align*}
                \innpr{\sqrt{n} \tilde{D}_n (\lambda, \vartheta_0), \delta} &= \frac{n}{\gbr{n \vartheta_0}} \frac{1}{\sqrt{n}} \sum_{i=1}^{\gbr{\lambda \gbr{n \vartheta_0}}} \innpr{ \eta_i^{(1)} , \delta} - \frac{n}{n - \gbr{n \vartheta_0}} \frac{1}{\sqrt{n}} \sum_{i=\gbr{n \vartheta_0} + 1}^{\gbr{n \vartheta_0} + \lambda (n - \gbr{n \vartheta_0})} \innpr{ \eta_i^{(2)} , \delta}\\
                &= \frac{n}{\gbr{n \vartheta_0}} {Z}^{(1)}_n \br{\lambda \tfrac{\gbr{n \vartheta_0}}{n}} - \frac{n}{n - \gbr{n \vartheta_0}} \br{{Z}^{(2)}_n \br{\tfrac{\gbr{n \vartheta_0} + \gbr{\lambda (n - \gbr{n \vartheta_0})}}{n}} - {Z}^{(2)}_n \br{\vartheta_0}}.
            \end{align*}
            Hence,
            \begin{align*}
              \mathbb{U}_n (\lambda)  &=   \sqrt{n} \br{\norm{D_n (\lambda, \vartheta_0)}^2 - \lambda^2 \norm{\delta}^2}  \\
              & = \frac{2 \lambda}{\vartheta_0} {Z}^{(1)}_n \br{\lambda \tfrac{\gbr{n \vartheta_0}}{n}} - \frac{2 \lambda}{1-\vartheta_0} \br{{Z}^{(2)}_n \br{\tfrac{\gbr{n \vartheta_0} + \gbr{\lambda (n - \gbr{n \vartheta_0})}}{n}} - {Z}^{(2)}_n \br{\vartheta_0}} + o_\p (1)
            \end{align*}
            and by an application of Lemma \ref{general_lemma_b1} with $\zeta_1 = \zeta_2 = \delta$ and $\xi_1 \equiv \xi_2 \equiv 0$, (note that  in this case  Assumption \ref{ass_2nd} is sufficient) we obtain
            \begin{align*}
                \cbr{
                    \mathbb{U}_n (\lambda) 
            %    \sqrt{n} \br{\norm{D_n (\lambda, \vartheta_0)}^2 - \lambda^2 \norm{\delta}^2}
            }_{\lambda \in \sqbr{0,1}} \convproc & \cbr{\mathbb{Z}(\lambda)}_{\lambda \in \sqbr{0,1}} \\
            & := \cbr{\frac{2 \lambda}{\vartheta_0} {\mathbb{Z}}^{(1)} (\lambda \vartheta_0) - \frac{2\lambda}{1-\vartheta_0} \br{{\mathbb{Z}}^{(2)} (\vartheta_0 + \lambda (1- \vartheta_0)) - {\mathbb{Z}}^{(2)} (\vartheta_0)}}_{\lambda \in \sqbr{0,1}},
            \end{align*}
            with ${\mathbb{Z}}^{(1)} = {\Sigma}_{11}^* \mathbb{B}_1 + {{\Sigma}}_{12}^* \mathbb{B}_2$ and ${\mathbb{Z}}^{(2)} = {\Sigma}_{21}^* \mathbb{B}_1 + {\Sigma}_{22}^* \mathbb{B}_2,$
            where $\mathbb{B}_1$  and 
             $\mathbb{B}_2$ are  independent Brownian motions. Here  ${\Sigma}_{ij}^*$ denotes the $ij$-th entry of the matrix  $\Sigma^{1/2}$ ($i,j=1,2$) and $\Sigma$ is defined by the entries given in \eqref{hd19}, where in this case  $\zeta_1 = \zeta_2 = \delta$ and $\xi_1 \equiv \xi_2 \equiv 0$.
            %\begin{align*}
            %    \Sigma = \br{\Sigma_{ij}}_{i,j =1,2} = \br{\sum_{h \in \mathbb{Z}} \sum_{s \in S} \int \negthickspace \int \cov (\eta_0^{(i)} (s_1, t_1), \eta_h^{(j)} (s_2, t_2) ) \delta(s_1, t_1) \delta (s_2, t_2) \mathrm{d}t_1 \mathrm{d}t_2 }_{i,j=1,2}.
            %\end{align*}
            Inspecting the covariance structure of the limit above, we see that because of
            \begin{align*}
                \cov ({\mathbb{Z}}^{(1)} (\lambda_1 \vartheta_0), {\mathbb{Z}}^{(1)} (\lambda_2 \vartheta_0)) &= (\lambda_1 \wedge \lambda_2) \vartheta_0 \Sigma_{11},\\
                \cov ({\mathbb{Z}}^{(2)} (\vartheta_0 + \lambda_1 (1 - \vartheta_0)) - {\mathbb{Z}}^{(2)} ( \vartheta_0), {\mathbb{Z}}^{(2)} (\vartheta_0 + \lambda_2 (1 - \vartheta_0)) - {\mathbb{Z}}^{(2)} (\vartheta_0)) &= (\lambda_1 \wedge \lambda_2) (1-\vartheta_0) \Sigma_{22},\\
                \cov ({\mathbb{Z}}^{(1)} (\lambda_1 \vartheta_0), {\mathbb{Z}}^{(2)} (\vartheta_0 + \lambda_2 (1 - \vartheta_0)) - {\mathbb{Z}}^{(2)} ( \vartheta_0)) &= 0,
            \end{align*}
            we have $$\mathrm{Cov} \br{\mathbb{Z} (\lambda_2) , \mathbb{Z}(\lambda_2)} = 4 \lambda_1 \lambda_2 (\lambda_1 \wedge \lambda_2) \br{\frac{1}{\vartheta_0} \Sigma_{11} + \frac{1}{1 - \vartheta_0}\Sigma_{22}}.$$ Hence the limit has the same distribution as $\tau_{\delta, \vartheta_0} \cbr{\lambda \mathbb{B} (\lambda)}_{\lambda \in \sqbr{0,1}}$, and the first assertion follows.\\
            %%%%%%%%
          
            \noindent For a proof of \eqref{hd6b}, we rewrite the expression as
            \begin{align*}
                \sqrt{n}  \br{\norm{D_n (\lambda, \hat{\vartheta}_n)}^2 - \lambda^2 \norm{\delta}^2} &= \sqrt{n} \big( \norm{D_n (\lambda, {\vartheta_0})}^2 - \lambda^2 \norm{\delta}^2 \big) + N_n (\lambda),
                %&\pad+ \sqrt{n} \bigg( \norm{D_n (\lambda, \hat{\vartheta}_n) - D_n (\lambda, {\vartheta_0})}^2 + 2 \innpr{D_n (\lambda, \hat{\vartheta}_n), D_n (\lambda, \hat{\vartheta}_n) - D_n (\lambda, {\vartheta_0})} \bigg).
            \end{align*}
            where $$N_n (\lambda) := \sqrt{n} \bigg( \norm{D_n (\lambda, \hat{\vartheta}_n) - D_n (\lambda, {\vartheta_0})}^2 + 2 \innpr{D_n (\lambda, \hat{\vartheta}_n), D_n (\lambda, \hat{\vartheta}_n) - D_n (\lambda, {\vartheta_0})} \bigg)$$
            By the Cauchy-Schwarz inequality and both parts of Lemma \ref{lem:approx_for_cp_in_summation}, we obtain $\sup_{\lambda} N_n (\lambda) = o_\p (1)$. Therefore, the convergence follows from \eqref{hd6a}.

    \bigskip
    \bigskip

 \subsection{Proof of Theorem \ref{thm:test_asymptotics}}
 
 First of all, we consider $\| \delta \|^2 = 0$. By a careful 
 inspection of the proof part 1 of Lemma \ref{lem:approx_for_cp_in_summation}, we can verify that $\hat{\mathbb{D}}_n$ is a consistent estimator of $\| \delta \|^2$. So in this case, we have $\hat{\mathbb{D}}_n = o_\p (1)$. By Remark \ref{remsym} the random variable 
 $\mathbb{W}$ in \eqref{disp:random_variable_w}
 has a symmetric distribution, which in turn implies $q_{1-\alpha} (\mathbb{W}) \geq 0$, whenever $\alpha \leq 0.5$.
 Together with the fact that $\hat{\mathbb{V}}_n \geq 0$, we have
 \begin{align*}
    \P{\hat{\mathbb{D}}_n > \Delta + q_{1-\alpha} (\mathbb{W}) \hat{\mathbb{V}}_n} \longrightarrow 0,
 \end{align*}
 whenever $\Delta > 0$  and $\alpha \leq 0.5$. Now, consider the case $\| \delta \| > 0$, then, by  Theorem \ref{thm:cp_spatiotemporal_mean}, the weak convergence of  the vector 
 $\sqrt{n}  \big (( \hat{\mathbb{D}}_n - \norm{\delta}^2 ) ,  \hat{\mathbb{V}}_n \big )^\top $ is an immediate consequence of \eqref{hd6b} and the continuous mapping theorem applied to the map
 $$f \longmapsto 
 \big (f(1),  \big ( \int_0^1 \lambda^2 (f(\lambda) - \lambda f(1)) \mathrm{d} \nu (\lambda)  \big )^{1/2} \big)^\top.
 $$
 %where $\nu$ is some probability measure on the interval $(0,1).$ 
 Hence, we have $ \hat{\mathbb{D}}_n - \norm{\delta}^2 = o_\p (1)$ and $\hat{\mathbb{V}}_n = o_\p (1)$. Therefore,
 \begin{align*}
     \P{\hat{\mathbb{D}}_n - \norm{\delta}^2 > \Delta - \norm{\delta}^2 + q_{1-\alpha} (\mathbb{W}) \hat{\mathbb{V}}_n}
     %= \P{o_\p (1) > \Delta - \norm{\delta}^2 }
     \longrightarrow \begin{cases} 0,& \text{if } \norm{\delta}^2 < \Delta,\\
     1,& \text{if } \norm{\delta}^2 > \Delta,\end{cases}
 \end{align*}
 as $n \to \infty$.
 In the case $\norm{\delta}^2 = \Delta$ and $\tau_{\delta, \vartheta_0}^2 > 0$, 
 we obtain from  \eqref{hd6b} and the continuous mapping theorem applied to the map $f \longmapsto 
 f(1) \big ( \int_0^1 \lambda^2 (f(\lambda) - \lambda f(1)) \mathrm{d} \nu (\lambda)  \big )^{-1/2}
 $, that
 \begin{align*}
     \frac{\hat{\mathbb{D}}_n - \norm{\delta}^2}{\hat{\mathbb{V}}_n} \convproc \mathbb{W},
 \end{align*}
 where $\mathbb{W}$ is defined in \eqref{disp:random_variable_w}. This yields
 \begin{align*}
  \lim_{n \to \infty}    \P{\hat{\mathbb{D}}_n > \Delta  + q_{1-\alpha} (\mathbb{W}) \hat{\mathbb{V}}_n} =
    \lim_{n \to \infty}  
    \P{\frac{\hat{\mathbb{D}}_n - \norm{\delta}^2}{\hat{\mathbb{V}}_n} > q_{1-\alpha} (\mathbb{W})} = \alpha.
 \end{align*}

 \subsection{Proof of Theorem \ref{thm7}}
    We show that
    \begin{align}
        \sup_{0 \leq \lambda \leq 1} \sqrt{\lambda} \norm{\hat{\vartheta}_n \hat{c}^{(1)}_\lambda - \vartheta_0 \E {\eta_0^{(1)} \otimes \eta_0^{(1)}}}_2 & = O_\p \br{\frac{\log^{2/\kappa} (n)}{\sqrt{n}}} ~, \label{hd13a} \\
                \sup_{0 \leq \lambda \leq 1} \sqrt{\lambda} \norm{
              (1- \hat{\vartheta}_n) \hat{c}_\lambda^{(2)} - (1-\vartheta_0 )\E {\eta_0^{(2)} \otimes \eta_0^{(2)}}}_2 & = O_\p \br{\frac{\log^{2/\kappa} (n)}{\sqrt{n}}} ~, \label{hd13b}
    \end{align}
    For the sake of brevity we restrict ourselves to a proof of \eqref{hd13a}, the proof of \eqref{hd13b} follows by similar arguments.
  
    First, we assume that $\lambda < \frac{1}{\gbr{n \hat \vartheta_n}}$, then $\hat{c}^{(1)}_{\lambda} = 0$ and 
    \begin{align*}
        \sqrt{\lambda} \norm{\hat{\vartheta}_n \hat{c}^{(1)}_\lambda - \vartheta_0 \E {\eta_0^{(1)} \otimes \eta_0^{(1)}}}_2 < \frac{\vartheta_0}{\sqrt{\gbr{n \hat \vartheta_n}}} \bigg| \E {\eta_0^{(1)} \otimes \eta_0^{(1)}} \bigg| = O_\p (n^{-1/2})
    \end{align*}
    uniformly in $\lambda \in [0, \frac{1}{\gbr{n \hat \vartheta_n}})$. If  $\frac{1}{\gbr{n \hat \vartheta_n}} \leq \lambda \leq 1$ we use the representation 
    \begin{align}\label{disp:expansion}
        \hat{c}^{(1)}_\lambda = \frac{1}{\gbr{\lambda \gbr{n \hat{\vartheta}_n}}} \sum_{i=1}^{\gbr{\lambda \gbr{n \hat{\vartheta}_n}}} X_i \otimes X_i - \frac{1}{\gbr{\lambda \gbr{n \hat{\vartheta}_n}}^2 } \sum_{i,j=1}^{\gbr{\lambda \gbr{n \hat{\vartheta}_n}}} X_i \otimes X_j
    \end{align}
   and consider the first term.
   For this we get
    \begin{align} \nonumber 
      &  \frac{1}{\gbr{\lambda \gbr{n \hat{\vartheta}_n}}} \sum_{i=1}^{\gbr{\lambda \gbr{n \hat{\vartheta}_n}}} X_i \otimes X_i = \frac{1}{\gbr{\lambda \gbr{n \hat{\vartheta}_n}}} \sum_{i=1}^{\gbr{\lambda \gbr{n \hat{\vartheta}_n}} \wedge \gbr{n \vartheta_0}} X_i \otimes X_i 
     \\
     &  
    \nonumber 
    ~~~~~~     ~~~~~~  ~~~~~~    ~~~~~~  ~~~~~~    ~~~~~~  ~~~~~~  ~~~~~~  ~~~~~~    + \frac{1}{\gbr{\lambda \gbr{n \hat{\vartheta}_n}}} \sum_{i=\gbr{\lambda \gbr{n \hat{\vartheta}_n}} \wedge \gbr{n \vartheta_0} + 1}^{\gbr{\lambda \gbr{n \hat{\vartheta}_n}}} X_i \otimes X_i\\
        &    \nonumber 
        ~~~~~~  ~~~~~~  = \frac{1}{\gbr{\lambda \gbr{n \hat{\vartheta}_n}}} \sum_{i=1}^{\gbr{\lambda \gbr{n \hat{\vartheta}_n}} \wedge \gbr{n \vartheta_0}} \br{\eta_i^{(1)} \otimes \eta_i^{(1)} + \mu \otimes \eta_i^{(1)} + \eta_i^{(1)} \otimes \mu + \mu \otimes \mu} \\
        &  \label{hd14b} 
        ~~~~~~   ~~~~~~ 
        %\phantom{afdas}
        + \frac{1}{\gbr{\lambda \gbr{n \hat{\vartheta}_n}}} \sum_{i=\gbr{\lambda \gbr{n \hat{\vartheta}_n}} \wedge \gbr{n \vartheta_0} + 1}^{\gbr{\lambda \gbr{n \hat{\vartheta}_n}}} \br{\eta_i^{(2)} \otimes \eta_i^{(2)} + \tilde{\mu} \otimes \eta_i^{(2)} + \eta_i^{(2)} \otimes \tilde{\mu} + \tilde{\mu} \otimes \tilde{\mu}},
    \end{align}
    where  we use the notation $\tilde{\mu} := \mu + \delta$.  The second term in (\ref{disp:expansion}) can be rewritten as follows: 
    \begin{align*}
        \frac{1}{\gbr{\lambda \gbr{n \hat{\vartheta}_n}}^2 }& \sum_{i,j=1}^{\gbr{\lambda \gbr{n \hat{\vartheta}_n}}} X_i \otimes X_j\\
        &= \frac{1}{\gbr{\lambda \gbr{n \hat{\vartheta}_n}}^2 } \sum_{i,j=1}^{\gbr{\lambda \gbr{n \hat{\vartheta}_n}} \wedge \gbr{n \vartheta_0}} \br{\eta_i^{(1)} \otimes \eta_j^{(1)} + \mu \otimes \eta_j^{(1)} + \eta_i^{(1)} \otimes \mu + \mu \otimes \mu}\\
        & + \frac{\mathbbm{1} \cbr{\gbr{n \vartheta_0} \leq \gbr{\lambda \gbr{n \hat{\vartheta}_n}}}}{\gbr{\lambda \gbr{n \hat{\vartheta}_n}}^2} \bigg[ \sum_{i=1}^{\gbr{n \vartheta_0}} \sum_{j= \gbr{n \vartheta_0}+1}^{\gbr{\lambda \gbr{n \hat{\vartheta}_n}}} \br{\eta_i^{(1)} \otimes \eta_j^{(2)} + \mu \otimes \eta_j^{(2)} + \eta_i^{(1)} \otimes \tilde{\mu} + \mu \otimes \tilde{\mu}}\\
        &\phantom{asdfasdfasdasdfsdfa} + \sum_{i= \gbr{n \vartheta_0}+1}^{\gbr{\lambda \gbr{n \hat{\vartheta}_n}}} \sum_{j=1}^{\gbr{n \vartheta_0}}  \br{\eta_i^{(2)} \otimes \eta_j^{(1)} +  \eta_i^{(2)} \otimes \mu +  \tilde{\mu} \otimes \eta_j^{(1)} + \tilde{\mu} \otimes \mu}\\
        &\phantom{asdfasdfaasdsdfsdfa} + \sum_{i,j= \gbr{n \vartheta_0}+1}^{\gbr{\lambda \gbr{n \hat{\vartheta}_n}}} \br{\eta_i^{(2)} \otimes \eta_j^{(2)} +  \eta_i^{(2)} \otimes \tilde{\mu} +  \tilde{\mu} \otimes \eta_j^{(2)} + \tilde{\mu} \otimes \tilde{\mu}} \bigg].
    \end{align*}
    We now consider the cases
    $\gbr{\lambda \gbr{n \hat{\vartheta}_n}} < \gbr{n \vartheta_0}$
    and $\gbr{\lambda \gbr{n \hat{\vartheta}_n}} \geq \gbr{n \vartheta_0}$ separately. 
    For this purpose, we define the set $\Lambda := \{\frac{1}{\gbr{n \hat \vartheta_n}}\leq \lambda \leq 1 \mid \gbr{\lambda \gbr{n \hat \vartheta_n}} < \gbr{n \vartheta_0} \}$ and note that
    \begin{align*}
        &\sup_{\frac{1}{n} \leq \lambda \leq 1} \sqrt{\lambda} \norm{\hat{\vartheta}_n \hat{c}_\lambda^{(1)}  - \vartheta_0 \E{\eta_0^{(1)} \otimes \eta_0^{(1)}}}_2\\
    & \pppad =  \max \cbr{\sup_{\lambda \in \Lambda} \sqrt{\lambda} \norm{\hat{\vartheta}_n \hat{c}_\lambda^{(1)}  - \vartheta_0 \E{\eta_0^{(1)} \otimes \eta_0^{(1)}}}_2, \sup_{\lambda \in \Lambda^C} \sqrt{\lambda} \norm{\hat{\vartheta}_n \hat{c}_\lambda^{(1)}  - \vartheta_0 \E{\eta_0^{(1)} \otimes \eta_0^{(1)}}}_2}~.
    \end{align*}
  We consider each case individually, 
starting  with the first term, where the  
    $\sup$ is taken over the set $\Lambda$,  
i.e. $\gbr{\lambda \gbr{n \hat{\vartheta}_n}} < \gbr{n \vartheta_0}$. Observing that the sum in \eqref{hd14b} vanishes, we obtain
in this case that 
    \begin{align}
    \label{hd15}
        \hat{c}_\lambda^{(1)} = \frac{1}{\gbr{\lambda \gbr{n \hat{\vartheta}_n}}} \sum_{i=1}^{\gbr{\lambda \gbr{n \hat{\vartheta}_n}}} \eta_i^{(1)} \otimes \eta_i^{(1)} - \frac{1}{ \gbr{\lambda \gbr{n \hat{\vartheta}_n}}^2} \sum_{i,j=1}^{\gbr{\lambda \gbr{n \hat{\vartheta}_n}}} \eta_i^{(1)} \otimes \eta_j^{(1)}
     \end{align}
    In the subsequent  discussion, we will repeatedly make use of  the following inequality, without explicitly mentioning it. It  is  obtained by expanding the set over which  the supremum  is calculated, and substituting $\lambda' = \lambda \frac{\gbr{n \hat \vartheta_n}}{n}$:
     \begin{align*}
        \sup_{\lambda \in \Lambda} \frac{1}{\sqrt{\gbr{\lambda \gbr{n \hat \vartheta_n}}}} \norm{\sum_{i=1}^{\gbr{\lambda \gbr{n \hat \vartheta_n}}} \eta_i^{(1)}} &\leq \sup_{\frac{1}{n} \leq \lambda' \leq \frac{\gbr{n \hat \vartheta_n}}{n}} \frac{1}{\sqrt{\gbr{\lambda' n}}} \norm{\sum_{i=1}^{\gbr{\lambda' n}} \eta_i^{(1)}} 
        %\\    &
        \leq \sup_{\frac{1}{n} \leq \lambda \leq 1} \frac{1}{\sqrt{\gbr{\lambda n }}} \norm{\sum_{i=1}^{\gbr{\lambda n}} \eta_i^{(1)}}.
    \end{align*}
Regarding the second term in \eqref{hd15} we have for some $\kappa > 0$
    \begin{align*}
        \sup_{\lambda \in \Lambda} \sqrt{\lambda} \hat{\vartheta}_n \norm{\frac{1}{ \gbr{\lambda \gbr{n \hat{\vartheta}_n}}^2} \sum_{i,j=1}^{\gbr{\lambda \gbr{n \hat{\vartheta}_n}}} \eta_i^{(1)} \otimes \eta_j^{(1)}}_2 &= \sup_{\lambda \in \Lambda} \frac{\lambda \hat{\vartheta}_n}{\gbr{\lambda \gbr{ n \hat{\vartheta}_n}}} \frac{1}{\sqrt{\lambda}} \norm{\frac{1}{ \sqrt{\gbr{\lambda \gbr{n \hat{\vartheta}_n}}}} \sum_{i=1}^{\gbr{\lambda \gbr{n \hat{\vartheta}_n}}} \eta_i^{(1)}}^2\\
        &\leq O_\p (n^{-1/2}) \br{\sup_{\frac{1}{n} \leq \lambda \leq 1} \frac{1}{ \sqrt{\gbr{\lambda n}}} \norm{ \sum_{i=1}^{\gbr{\lambda n}} \eta_i^{(1)}} }^2 \\
        & = O_\p \br{\frac{\log^{2/\kappa} (n)}{\sqrt{n}}},
    \end{align*}
     by (\ref{disp:aue_lemma_b1}).
    This  implies
    \begin{align*}
    &    \sup_{\lambda \in \Lambda} \sqrt{\lambda} \norm{\hat{\vartheta}_n \hat{c}_\lambda^{(1)}  - \vartheta_0 \E{\eta_0^{(1)} \otimes \eta_0^{(1)}}}_2 \\
    & ~~~~~~~~ 
    \leq \sup_{ \lambda \in \Lambda } \sqrt{\lambda} \norm{ \frac{\hat{\vartheta}_n}{\gbr{\lambda \gbr{n \hat{\vartheta}_n}}} \sum_{i=1}^{\gbr{\lambda \gbr{n \hat{\vartheta}_n}}} \br{\eta_i^{(1)} \otimes \eta_i^{(1)} - \E{\eta_0^{(1)} \otimes \eta_0^{(1)}}}}_2\\
        &\pppad\pppad ~~~~~~~~ ~~~~~~~~  + |\hat{\vartheta}_n - \vartheta_0 | \E{\eta_0^{(1)} \otimes \eta_0^{(1)}} + O_\p \br{\frac{\log^{2/\kappa} (n)}{\sqrt{n}}}\\
        &~~~~~~~~  \leq O_\p (n^{-1/2}) \sup_{\frac{1}{n} \leq \lambda \leq 1} 
        %\frac{1}{\sqrt{n}} \frac{\lambda n \hat{\vartheta}_n}{\gbr{\lambda \gbr{n \hat{\vartheta}_n}}} \frac{\sqrt{\gbr{\lambda n}}}{ \sqrt{\lambda n}} 
        \frac{1}{\sqrt{\gbr{\lambda n}}} \norm{ \sum_{i=1}^{\gbr{\lambda n}} \br{\eta_i^{(1)} \otimes \eta_i^{(1)} - \E{\eta_0^{(1)} \otimes \eta_0^{(1)}}} }_2
        %\\         &\pppad\pppad 
        + O_\p \br{\frac{\log^{2/\kappa} (n)}{\sqrt{n}}}\\
        &~~~~~~~~  \leq O_\p \br{\frac{\log^{2/\kappa} (n)}{\sqrt{n}}},
    \end{align*}
    where we used \eqref{prop:changepoint_conv_rate} and again a variation of (\ref{disp:aue_lemma_b1}), this time for $L^2 ((S \times \sqbr{0,1})^2)$-valued random variables.

\medskip

\noindent
It remains to calculate the supremum over the set $\Lambda^C$. 
   If $\gbr{\lambda \gbr{n \hat{\vartheta}_n}} \geq \gbr{n \vartheta_0}$, a tedious
   but straightforward calculation gives
    \begin{align*}
        \hat{c}_\lambda^{(1)} &= \frac{1}{\gbr{\lambda \gbr{n \hat{\vartheta}_n}}} \sum_{i=1}^{\gbr{n \vartheta_0}} \eta_i^{(1)} \otimes \eta_i^{(1)} - \frac{1}{\gbr{\lambda \gbr{n \hat{\vartheta}_n}}^2} \sum_{i,j=1}^{\gbr{n \vartheta_0}} \eta_i^{(1)} \otimes \eta_j^{(1)} 
        \\
        & \ppad 
        + \frac{1}{\gbr{\lambda \gbr{n \hat{\vartheta}_n}}} \sum_{i= \gbr{n \vartheta_0} + 1}^{\gbr{\lambda \gbr{n \hat{\vartheta}_n}}} (\eta_i^{(2)} + \delta) \otimes (\eta_i^{(2)} + \delta) \\
        &\ppad - \frac{1}{\gbr{\lambda \gbr{n \hat{\vartheta}_n}}^2} \sum_{i=1}^{\gbr{n \vartheta_0}} \sum_{j=\gbr{n \vartheta_0} +1}^{\gbr{\lambda \gbr{n \hat{\vartheta}_n}}} \eta_i^{(1)} \otimes (\eta_j^{(2)} + \delta) - \frac{1}{\gbr{\lambda \gbr{n \hat{\vartheta}_n}}^2} \sum_{i=\gbr{n \vartheta_0} +1}^{\gbr{\lambda \gbr{n \hat{\vartheta}_n}}} \sum_{j=1}^{\gbr{n \vartheta_0}}  (\eta_i^{(2)} + \delta) \otimes \eta_j^{(1)}\\
        &\ppad + \frac{1}{\gbr{\lambda \gbr{n \hat{\vartheta}_n}}^2} \sum_{i,j= \gbr{n \vartheta_0}+1}^{\gbr{\lambda \gbr{n \hat{\vartheta}_n}}} (\eta_i^{(2)} + \delta) \otimes (\eta_j^{(2)} + \delta)
    \end{align*}
and  we claim that
   \begin{align} \nonumber
        \sup_{\lambda \in \Lambda^C} \sqrt{\lambda} \norm{\hat{\vartheta}_n \hat{c}_\lambda^{(1)} - \vartheta_0 \E{\eta_0^{(1)} \otimes \eta_0^{(1)}} }_2 &\leq \sup_{\frac{1}{n} \leq \lambda \leq 1} \sqrt{\lambda} \norm{\frac{\hat{\vartheta}_n}{\gbr{\lambda \gbr{n \hat{\vartheta}_n}}} \sum_{i=1}^{\gbr{n \vartheta_0}} \eta_i^{(1)} \otimes \eta_i^{(1)} - \vartheta_0 \E{\eta_0^{(1)} \otimes \eta_0^{(1)}} }_2\\
        &\pppad + O_\p \br{\frac{\log^{2/\kappa} (n)}{\sqrt{n}}}
        ~.
        \label{hd16}
    \end{align}
%    The assertion \eqref{hd13a} then follows from \eqref{disp:aue_lemma_b1}.
  For a proof of \eqref{hd16}, we show that
    \begin{align}
        \label{hd12a} 
      &  \sup_{ \lambda \in \Lambda^C} \norm{\frac{1}{\gbr{\lambda \gbr{n \hat{\vartheta}_n}} } \sum_{i=1}^{\gbr{n \vartheta_0}} \eta_i^{(1)} } = O_\p (n^{-1/2}),
        \\
        \label{hd12b} 
    &    \sup_{ \lambda \in \Lambda^C } \norm{\frac{1}{\gbr{\lambda \gbr{n \hat{\vartheta}_n}}} \sum_{i=\gbr{n \vartheta_0} + 1}^{\gbr{\lambda \gbr{n \hat{\vartheta}_n}}} (\eta_i^{(2)} + \delta) } = O_\p \br{\frac{\log^{1/\kappa} (n)}{\sqrt{n}}},
        \\
          \label{hd12c} 
    &      \sup_{\lambda \in \Lambda^C } \sqrt{\lambda} \hat{\vartheta}_n \norm{\frac{1}{\gbr{\lambda \gbr{n \hat{\vartheta}_n}}} \sum_{i= \gbr{n \vartheta_0} +1}^{\gbr{\lambda \gbr{n \hat{\vartheta}_n}}} (\eta_i^{(2)} + \delta) }^2 = O_\p \br{\frac{\log^{2/\kappa} (n)}{\sqrt{n}}}
        \\  
        \label{hd12d} 
      & \sup_{\lambda \in \Lambda^C } \sqrt{\lambda} \hat{\vartheta}_n \norm{\frac{1}{\gbr{\lambda \gbr{n \hat{\vartheta}_n}}} \sum_{i=\gbr{n \vartheta_0} + 1}^{\gbr{\lambda \gbr{n \hat{\vartheta}_n}}}  (\eta_i^{(2)} + \delta) \otimes (\eta_i^{(2)} + \delta)} = O_\p \br{\frac{\log^{1/\kappa} (n)}{\sqrt{n}}}
    \end{align}
    The estimate \eqref{hd12a} follows from
    \begin{align*}
        \sup_{\lambda \in \Lambda^C} \norm{\frac{1}{\gbr{\lambda \gbr{n \hat{\vartheta}_n}} } \sum_{i=1}^{\gbr{n \vartheta_0}} \eta_i^{(1)} } \leq \frac{1}{\sqrt{\gbr{n \vartheta_0}}} \norm{\frac{1}{\sqrt{\gbr{n \vartheta_0}}} \sum_{i=1}^{\gbr{n \vartheta_0}} \eta_i^{(1)} } = O_\p (n^{-1/2}),
    \end{align*}
    because $\gbr{n \vartheta_0} \leq \gbr{\lambda \gbr{n \hat{\vartheta}_n}}$ and the central limit theorem in Hilbert spaces in the last step. For the estimate \eqref{hd12b}, recall that by \eqref{prop:changepoint_conv_rate}, we have $\hat{\vartheta}_n = \vartheta_0 + o_\p (n^{-1/2})$.
    Moreover, this implies 
    \begin{align} \label{est1}
   \sup_{\lambda \in \Lambda^C} \frac{\gbr{n \vartheta_0}}{\gbr{\lambda \gbr{n \hat{\vartheta}_n}}} = 1 + o_\p (n^{-1/2}),
    \end{align}
    because 
    $$
    1 + o_\p (n^{-1/2}) = \frac{\gbr{n \vartheta_0}}{\gbr{n \hat{\vartheta}_n}} \leq \sup_{\lambda \in \Lambda^C} \frac{\gbr{n \vartheta_0}}{\gbr{\lambda \gbr{n \hat{\vartheta}_n}}} \leq \sup_{\lambda \in \Lambda^C} \frac{\gbr{\lambda \gbr{n \hat{\vartheta}_n}}}{\gbr{\lambda \gbr{n \hat{\vartheta}_n}}} = 1 ~.
    $$
    Using again $\gbr{n \vartheta_0} \leq \gbr{\lambda \gbr{n \hat{\vartheta}_n}}$ and the estimate  (\ref{disp:aue_lemma_b1}), we find that
    \begin{align*}
      &  \sup_{\lambda \in \Lambda^C} \norm{\frac{1}{\gbr{\lambda \gbr{n \hat{\vartheta}_n}}} \sum_{i=\gbr{n \vartheta_0} + 1}^{\gbr{\lambda \gbr{n \hat{\vartheta}_n}}} (\eta_i^{(2)} + \delta) } \\
      & ~~~~~~~~~~~~~~~ ~ 
      \leq \frac{1}{\sqrt{\gbr{n \vartheta_0}}} \sup_{\lambda \in \Lambda^C}  \norm{\frac{1}{\sqrt{\gbr{\lambda \gbr{n \hat{\vartheta}_n}}}} \sum_{i=1}^{\gbr{\lambda \gbr{n \hat{\vartheta}_n}}} \eta_i^{(2)} }\\
        &~~~~~~~~ ~~~~~~~~ \pppad + \frac{1}{\sqrt{\gbr{n \vartheta_0}}} \norm{\frac{1}{\sqrt{\gbr{n \vartheta_0}}} \sum_{i=1}^{\gbr{n \vartheta_0}} \eta_i^{(2)} } + 
\sup_{\lambda \in \Lambda^C}
\frac{\gbr{\lambda \gbr{n \hat{\vartheta}_n}} - \gbr{n \vartheta_0}}{\gbr{\lambda \gbr{n \hat{\vartheta}_n}}} \norm{\delta}
\\
        &~~~~~~~~ ~~~~~~~~ \leq O_\p (n^{-1/2}) \sup_{\frac{1}{n} \leq \lambda \leq 1} \norm{\frac{1}{\gbr{\lambda n}} \sum_{i=1}^{\gbr{\lambda n}} \eta_i^{(2)} }+ \frac{\gbr{n \hat \vartheta_n} - \gbr{n \vartheta_0}}{\gbr{n \vartheta_0}} 
        \norm{\delta} + O_\p (n^{-1/2})\\
        &~~~~~~~~ ~~~~~~~~ 
        = O_\p \br{\frac{\log^{1/\kappa} (n)}{\sqrt{n}}}.
    \end{align*}
    These calculations also yield  the estimate \eqref{hd12c}. For \eqref{hd12d}, we have, using  a version of (\ref{disp:aue_lemma_b1}) for $L^2( (S \times \sqbr{0,1})^2 )$-valued random variables and the fact that $\gbr{n \vartheta_0} \leq  \gbr{\lambda \gbr{n \hat{\vartheta}_n}}$, that
    \begin{align*}
        &\sup_{\lambda \in \Lambda^C} \sqrt{\lambda} \hat{\vartheta}_n \norm{\frac{1}{\gbr{\lambda \gbr{n \hat{\vartheta}_n}}} \sum_{i=\gbr{n \vartheta_0} +1}^{\gbr{\lambda \gbr{n \hat{\vartheta}_n}}} (\eta_i^{(2)} + \delta ) \otimes (\eta_i^{(2)} + \delta ) }_2\\
        &\pppad\leq \frac{1}{\sqrt{\gbr{n \vartheta}}} \sup_{\lambda \in \Lambda^C}  \norm{\frac{1}{\sqrt{\gbr{\lambda \gbr{n \hat{\vartheta}_n}}}} \sum_{i=1}^{\gbr{\lambda \gbr{n \hat{\vartheta}_n}}}  \tilde{\eta}^\otimes_i  }_2 + \frac{\hat \vartheta_n}{\sqrt{\gbr{n \vartheta_0}}} \norm{\frac{1}{\sqrt{\gbr{n \vartheta_0}}} \sum_{i=1}^{\gbr{n \vartheta_0}} \tilde{\eta}_i^\otimes } \\
        &\pppad\pppad+ o_\p (n^{-1/2}) \E{(\eta_0^{(2)} + \delta) \otimes (\eta_0^{(2)} + \delta)}\\
        &\pppad= O_\p \br{\frac{\log^{1/\kappa} (n)}{\sqrt{n}}},
    \end{align*}
    where $\tilde{\eta}^\otimes_i := (\eta_i^{(2)} + \delta) \otimes (\eta_i^{(2)} + \delta) - \E{(\eta_0^{(2)} + \delta) \otimes (\eta_0^{(2)} + \delta)}$. 
    %Lastly, note that $\frac{\gbr{n \vartheta_0}}{\gbr{\lambda \gbr{n \hat{\vartheta}_n}}} = 1 + o_\p (n^{-1/2})$, because
   % $$1 + o_\p (n^{-1/2}) = \frac{\gbr{\lambda \gbr{n \vartheta_0}}}{\gbr{\lambda \gbr{n \hat{\vartheta}_n}}} \leq \frac{\gbr{n \vartheta_0}}{\gbr{\lambda \gbr{n \hat{\vartheta}_n}}} \leq \frac{\gbr{\lambda \gbr{n \hat{\vartheta}_n}}}{\gbr{\lambda \gbr{n \hat{\vartheta}_n}}} = 1$$ and therefore, 
Finally,  observing \eqref{hd16}, the assertion \eqref{hd13a} follows from 
    \begin{align*}
        &\sup_{\lambda \in \Lambda^C} \sqrt{\lambda} \norm{\frac{\hat{\vartheta}_n}{\gbr{\lambda \gbr{n \hat{\vartheta}_n}}} \sum_{i=1}^{\gbr{n \vartheta_0}} \eta_i^{(1)} \otimes \eta_i^{(1)} - \vartheta_0 \E{\eta_0^{(1)} \otimes \eta_0^{(1)}} }_2\\
        &~~~~~~~~ \pppad \leq \frac{1}{\sqrt{\gbr{n \vartheta_0}}} \norm{ \frac{1}{\sqrt{\gbr{n \vartheta_0}}} \sum_{i=1}^{\gbr{n \vartheta_0}} \br{\eta_i^{(1)} \otimes \eta_i^{(1)} - \E{\eta_0^{(1)} \otimes \eta_0^{(1)}}} }_2 \\
        & ~~~~~~~~ ~~~~~~~~   \pppad + \sup_{\lambda \in \Lambda^C} \bigg| \frac{\gbr{n \vartheta_0}}{\gbr{\lambda \gbr{n \hat{\vartheta}_n}}}\hat{\vartheta}_n - \vartheta_0 \bigg| \E{\eta_0^{(1)} \otimes \eta_0^{(1)}}\\
        &~~~~~~~~  \pppad = O_\p (n^{-1/2}) % + o_\p ( n^{-1/2}) = O_\p (n^{-1/2})~.
    \end{align*}
    by \eqref{disp:aue_lemma_b1} and \eqref{est1}.
  This completes the proof of Theorem \ref{thm7}.

\subsection{Proof of Theorem \ref{thm9}} 
We begin stating an expansion for the difference $\hat{c}_{j,\lambda} \hat{w}_{j,\lambda} - w_j$, which is proved by similar arguments as given in  the proof of Proposition 2.1 in \cite{aue2019twosample}. The details are therefore omitted.  

\begin{prop} \label{prop12}
    Suppose Assumption \ref{ass_4th} and \ref{eigvals_order} hold. If further $\vartheta_0 \in \br{\varepsilon, 1-\varepsilon}$, then for any $j \leq d$ and some $\kappa >0$, we have 
    \begin{align*}
        \sup_{0 \leq \lambda \leq 1} \norm{\lambda (\hat{c}_{j,\lambda} \hat{w}_{j,\lambda} - w_j) - \frac{1}{\sqrt{n}} \sum_{i \neq j} \frac{w_i}{\tau_j - \tau_i}  \innpr{\hat{Z}_{n, \lambda}, w_j \otimes w_i}_2  } &= O_\p \br{\frac{\log^{\kappa} (n)}{n}}
    \end{align*}
    and
    \begin{align*}
        \sup_{0\leq\lambda\leq 1} \sqrt{\lambda} \| \hat c_{j, \lambda} \hat w_{j,\lambda} - w_{j} \| &= O_\p \br{\frac{\log^{1/\kappa} (n)}{\sqrt{n}}},
    \end{align*}
    where $\hat{c}_{j, \lambda} = \mathrm{sign} \innpr{\hat{w}_{j, \lambda}, w_j}$ and $$\hat{Z}_{n, \lambda} := \frac{1}{\sqrt{n}} \br{ \sum_{i=1}^{\gbr{\lambda \gbr{n \vartheta_0}}} (\eta_i^{(1)} \otimes \eta_i^{(1)} - c^{(1)}) + \sum_{i=\gbr{n \vartheta_0}+1}^{\gbr{n \vartheta_0} + \gbr{\lambda (n - \gbr{n \vartheta_0})}} (\eta_i^{(2)} \otimes \eta_i^{(2)} - c^{(2)})  }.$$
\end{prop}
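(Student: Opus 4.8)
\textit{Proof proposal.} The plan is to run the eigenfunction perturbation argument of Proposition~2.1 in \cite{aue2019twosample} while carrying the parameter $\lambda$ through every estimate, so that all bounds hold uniformly on $[0,1]$. Write $\hat T_\lambda:=\hat c_{\hat\vartheta_n,\lambda}$, $T:=c_{\vartheta_0}$, $\Delta_\lambda:=\hat T_\lambda-T$, and recall from Theorem~\ref{thm7} that $\sup_{0\le\lambda\le1}\sqrt\lambda\,\norm{\Delta_\lambda}_2=O_\p(\log^{2/\kappa}(n)/\sqrt n)$, hence also $\sup_\lambda\lambda\norm{\Delta_\lambda}_2^2=O_\p(\log^{4/\kappa}(n)/n)$. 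The first step is to isolate the leading fluctuation, i.e.\ to show that for some $\kappa'>0$
\[
\sup_{0\le\lambda\le1}\norm{\lambda\Delta_\lambda-\tfrac{1}{\sqrt n}\hat Z_{n,\lambda}}_2=O_\p\br{\tfrac{\log^{\kappa'}(n)}{n}}.
\]
This is obtained by re-opening the multi-case decomposition already used in the proof of Theorem~\ref{thm7} (the expansions around \eqref{disp:expansion}, \eqref{hd14b} and the corresponding one for $\hat c^{(2)}_\lambda$): after multiplying by $\lambda$, the prefactor $\lambda\hat\vartheta_n/\gbr{\lambda\gbr{n\hat\vartheta_n}}$ equals $1/n$ up to a uniform $o_\p(n^{-3/2})\,\gbr{\lambda\gbr{n\hat\vartheta_n}}$ by $\gbr{\lambda\gbr{n\hat\vartheta_n}}=\lambda n\hat\vartheta_n+O(1)$ and $\hat\vartheta_n=\vartheta_0+o_\p(n^{-1/2})$ from \eqref{prop:changepoint_conv_rate}; the sample-mean centering corrections are products of partial sums, each of order $\log^{1/\kappa}(n)/\sqrt n$ by \eqref{disp:aue_lemma_b1} and its $L^2((S\times[0,1])^2)$-valued analogue, and so contribute $O_\p(\log^{2/\kappa}(n)/n)$ after the $\lambda$-rescaling; and the increments of the partial sums over the $o_\p(\sqrt n)$ indices between $\gbr{\lambda\gbr{n\hat\vartheta_n}}$ and $\gbr{\lambda\gbr{n\vartheta_0}}$ (and the second-block analogue) are negligible by a maximal inequality combined with \eqref{prop:changepoint_conv_rate}.

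Second, put $a_n:=C_0\log^2(n)/n$; by Assumption~\ref{eigvals_order} the spectral gap $g:=\min_{1\le i\le d}(\tau_i-\tau_{i+1})$ is positive, and on an event of probability tending to $1$ (for $C_0$ large) one has $\norm{\Delta_\lambda}_2\le g/2$ for all $\lambda\ge a_n$. On this range the Riesz-projection expansion applies to $\hat T_\lambda=T+\Delta_\lambda$: writing $\hat P_{j,\lambda}$ for the rank-one spectral projector of $\hat T_\lambda$ at the eigenvalue nearest $\tau_j$ and $P_i:=w_i\otimes w_i$, the standard residue computation gives
\[
\hat P_{j,\lambda}-P_j=\sum_{i\ne j}\frac{P_i\Delta_\lambda P_j+P_j\Delta_\lambda P_i}{\tau_j-\tau_i}+\mathcal R_{j,\lambda},\qquad \norm{\mathcal R_{j,\lambda}}\le c_g\norm{\Delta_\lambda}_2^2,
\]
with $c_g$ depending only on $g$. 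Applying both sides to $w_j$, using $P_iw_j=0$ for $i\ne j$ and $\innpr{\Delta_\lambda w_j,w_i}=\innpr{\Delta_\lambda,w_j\otimes w_i}_2$, and using $\hat c_{j,\lambda}\hat w_{j,\lambda}=\hat P_{j,\lambda}w_j+O(\norm{\hat P_{j,\lambda}-P_j}^2)$ (the sign $\hat c_{j,\lambda}=\mathrm{sign}\innpr{\hat w_{j,\lambda},w_j}$ fixing the phase, the error bounded via $1-|\innpr{\hat w_{j,\lambda},w_j}|\le 1-\innpr{\hat w_{j,\lambda},w_j}^2=\tfrac12\norm{\hat P_{j,\lambda}-P_j}^2$), one obtains
\[
\hat c_{j,\lambda}\hat w_{j,\lambda}-w_j=\sum_{i\ne j}\frac{w_i}{\tau_j-\tau_i}\innpr{\Delta_\lambda,w_j\otimes w_i}_2+O_\p\br{\norm{\Delta_\lambda}_2^2}
\]
uniformly on $\{\lambda\ge a_n\}$. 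Multiplying by $\lambda$, replacing $\lambda\Delta_\lambda$ by $\tfrac{1}{\sqrt n}\hat Z_{n,\lambda}$ via the first step, and invoking $\sup_\lambda\lambda\norm{\Delta_\lambda}_2^2=O_\p(\log^{4/\kappa}(n)/n)$ yields the first assertion on $\{\lambda\ge a_n\}$ (after relabelling $\kappa$). On $\{\lambda<a_n\}$ — including the degenerate subrange $\gbr{\lambda\gbr{n\hat\vartheta_n}}=0$, where $\hat w_{j,\lambda}:=w_j$ — both the left-hand side and the leading term are trivially $O(a_n)+O_\p(\sqrt{a_n}\,\log^{1/\kappa}(n)/\sqrt n)=O_\p(\log^{\kappa'}(n)/n)$, using $\norm{\hat c_{j,\lambda}\hat w_{j,\lambda}-w_j}\le 2$ and $\norm{\hat Z_{n,\lambda}}_2\le\sqrt\lambda\,\sup_{0<\mu\le1}\mu^{-1/2}\norm{\hat Z_{n,\mu}}_2$.

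Third, for the rate on $\sqrt\lambda\,\norm{\hat c_{j,\lambda}\hat w_{j,\lambda}-w_j}$, divide the first assertion by $\sqrt\lambda$; by orthonormality of $\{w_j\otimes w_i\}_{i\ge1}$ in $L^2((S\times[0,1])^2)$,
\[
\sqrt\lambda\,\norm{\hat c_{j,\lambda}\hat w_{j,\lambda}-w_j}\le\frac{1}{\sqrt{n\lambda}}\Big(\sum_{i\ne j}\frac{\innpr{\hat Z_{n,\lambda},w_j\otimes w_i}_2^2}{(\tau_j-\tau_i)^2}\Big)^{1/2}+\frac{1}{\sqrt\lambda}\,O_\p\br{\tfrac{\log^{\kappa'}(n)}{n}}.
\]
By Assumption~\ref{eigvals_order} each $(\tau_j-\tau_i)^{-2}$ with $i\ne j$ is bounded by $C_j:=\big(\max_{i\le d,\,i\ne j}(\tau_j-\tau_i)^{-2}\big)\vee(\tau_d-\tau_{d+1})^{-2}<\infty$ (the tail $i>d$ using $\tau_j-\tau_i\ge\tau_d-\tau_{d+1}$), so by Bessel's inequality the sum is $\le C_j\norm{\hat Z_{n,\lambda}}_2^2$ and the first term is $\le\sqrt{C_j}\,n^{-1/2}\sup_{0<\mu\le1}\mu^{-1/2}\norm{\hat Z_{n,\mu}}_2=O_\p(\log^{1/\kappa}(n)/\sqrt n)$, by the $L^2((S\times[0,1])^2)$-valued version of \eqref{disp:aue_lemma_b1} applied to the summands $\eta_i^{(\ell)}\otimes\eta_i^{(\ell)}-c^{(\ell)}$, which inherit finite $(2+\psi)$-th moments and $m$-dependent approximability from Assumption~\ref{ass_4th}; the second term is $O_\p(\log^{\kappa'}(n)/\sqrt n)$ since $\lambda\ge1/\gbr{n\hat\vartheta_n}$ on the non-degenerate range. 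Relabelling $\kappa$ once more completes the proof.

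The main difficulty is the first step: showing $\lambda\Delta_\lambda=\tfrac{1}{\sqrt n}\hat Z_{n,\lambda}+O_\p(\log^{\kappa'}(n)/n)$ uniformly in $\lambda$ is bookkeeping-heavy, since one must check that the sample-mean centering corrections and the replacement of $\hat\vartheta_n$ by $\vartheta_0$ in both the summation ranges and the convex-combination weights contribute only at the $1/n$ scale, uniformly; and one must be careful to invoke the perturbation expansion only on $\{\lambda\ge a_n\}$, where $\norm{\Delta_\lambda}_2$ stays below the spectral gap, handling the small-$\lambda$ regime — in which the partial sums have only $O(\log^2 n)$ terms and the estimated eigenfunctions carry no usable information — by the separate crude bound above.
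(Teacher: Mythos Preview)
Your strategy is exactly what the paper indicates: the paper omits the proof entirely and refers to ``similar arguments as given in the proof of Proposition~2.1 in \cite{aue2019twosample}'', and you carry that perturbation argument (Riesz/resolvent expansion once the perturbation is below the spectral gap, then a separate crude bound on the small-$\lambda$ range) through uniformly in $\lambda$.

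One point in Step~1 needs tightening. You claim
\[
\sup_{0\le\lambda\le 1}\norm{\lambda\Delta_\lambda-\tfrac{1}{\sqrt n}\hat Z_{n,\lambda}}_2=O_\p\!\br{\frac{\log^{\kappa'}(n)}{n}}
\]
and justify the replacement of $\hat\vartheta_n$ by $\vartheta_0$ (both in the summation limits and in the convex-combination weights) via ``a maximal inequality combined with \eqref{prop:changepoint_conv_rate}''. But \eqref{prop:changepoint_conv_rate} only gives $\hat\vartheta_n-\vartheta_0=o_\p(n^{-1/2})$. With that rate, the change in the weight contributes $\lambda(\hat\vartheta_n-\vartheta_0)(\hat c^{(1)}_\lambda-\hat c^{(2)}_\lambda)=o_\p(n^{-1/2})$ uniformly, and the change in the summation limits involves $o_\p(\sqrt n)$ indices, which after a maximal inequality and division by $n$ gives at best $o_\p(n^{-3/4}\log^{c}(n))$ --- neither is $O_\p(\log^{\kappa'}(n)/n)$. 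To obtain the rate as stated you need the sharper $\hat\vartheta_n-\vartheta_0=O_\p(1/n)$, which is the standard argmax rate under a fixed change $\norm{\delta}>0$ and is implicitly available from the reference behind \eqref{prop:changepoint_conv_rate}; with $O_\p(1/n)$ the number of affected indices is $O_\p(1)$ and both replacements are $O_\p(1/n)$. Alternatively, since Theorem~\ref{thm9} only uses the first display at the $o_\p(n^{-1/2})$ level (cf.\ \eqref{d4}), you could simply weaken the target in Step~1 to $o_\p(n^{-1/2})$ and the rest of your argument goes through unchanged.
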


\noindent
For the proof of the first assertion \eqref{hd18a} of Theorem \ref{thm9}, we  add
and subtract $\lambda \delta$, use \eqref{hd20} and obtain
\begin{align} \label{d1}
    \innpr{D_n (\lambda, \vartheta_0), \hat{w}_{k,\lambda}}^2 - \lambda^2 \innpr{\delta, w_k}^2 
    % &
    %= \innpr{\tilde{D}_n (\lambda, \vartheta_0) , \hat{c}_{k,\lambda} \hat{w}_{k,\lambda} }^2 + 2\lambda \innpr{\tilde{D}_n (\lambda, \vartheta_0), \hat{c}_{k,\lambda} \hat{w}_{k,\lambda}} \innpr{\delta, \hat{c}_{k,\lambda} \hat{w}_{k, \lambda}}\\
%    &~~~~ \lambda^2 \innpr{\delta, \hat{c}_{k,\lambda} \hat{w_{k,\lambda}}}^2 - \lambda^2 \innpr{\delta, w_k}^2 + O_\p (n^{-1}),\\
    &= I_{k,n} (\lambda) + J_{k,n} (\lambda) + K_{k,n} (\lambda) + O_\p (n^{-1})~,
\end{align}
where $\hat{c}_{k,\lambda}$ is defined in Proposition \ref{prop12} and
\begin{align*}
    I_{k,n} (\lambda) &:= \innpr{\tilde{D}_n (\lambda, \vartheta_0) , \hat{c}_{k,\lambda} \hat{w}_{k,\lambda} }^2, \\
    J_{k,n} (\lambda) & := 2\lambda \innpr{\tilde{D}_n (\lambda, \vartheta_0), \hat{c}_{k,\lambda} \hat{w}_{k,\lambda}} \innpr{\delta, \hat{c}_{k,\lambda} \hat{w}_{k, \lambda}},\\
    K_{k,n} (\lambda) &:= \lambda^2 \innpr{\delta, \hat{c}_{k,\lambda} \hat{w}_{k,\lambda}}^2 - \lambda^2 \innpr{\delta, w_k}^2.
\end{align*}
By Theorem \ref{generalberkes}, $\sup_{0 \leq \lambda \leq 1} \|\sqrt{n}\tilde{D}_n (\lambda, \vartheta_0)\|$ is bounded and the Cauchy-Schwarz inequality, yields 
\begin{align*}  %\label{d2}
    \sup_{0\leq\lambda\leq 1}\sqrt{n} I_{k,n} (\lambda) \leq \frac{1}{\sqrt{n}} \sup_{0 \leq \lambda \leq 1} \|\sqrt{n}\tilde{D}_n (\lambda, \vartheta_0)\|^2 \norm{\hat{w}_{k,\lambda}}^2 = o_\p (1)
\end{align*}
for all $k=1,...,d$. Moving to the second term $J_{k,n}$, we see that
\begin{align*}
\nonumber
    \sqrt{n} J_{k,n} (\lambda) &= \sqrt{n} 2 \lambda ( \innpr{\tilde{D}_n (\lambda, \vartheta_0), \hat{c}_{k, \lambda} \hat{w}_{k, \lambda} - w_k } \innpr{\delta, \hat{c}_{k, \lambda} \hat{w}_{k, \lambda} - w_k } +\innpr{\tilde{D}_n (\lambda, \vartheta_0), w_k } \innpr{\delta, \hat{c}_{k, \lambda} \hat{w}_{k, \lambda} - w_k }  \\
    \nonumber
    &+ \innpr{\tilde{D}_n (\lambda, \vartheta_0), \hat{c}_{k, \lambda} \hat{w}_{k, \lambda} - w_k } \innpr{\delta, w_k }  + \innpr{\tilde{D}_n (\lambda, \vartheta_0), w_k } \innpr{\delta, w_k })\\
    &= 2\lambda \innpr{\sqrt{n} \tilde{D}_n (\lambda, \vartheta_0) , w_k} \innpr{\delta, w_k} + o_\p (1), %\label{d3}
\end{align*}
uniformly in $\lambda$, by the second part of Proposition \ref{prop12}. Lastly,
we have by the first part of Proposition \ref{prop12}
\begin{align}  \nonumber
    \sqrt{n} K_{k,n} (\lambda) &= \innpr{\delta, \lambda (\hat{c}_{k, \lambda} \hat{w}_{k, \lambda} - w_k) }^2 + 2 \lambda \innpr{\delta, \lambda (\hat{c}_{k, \lambda} \hat{w}_{k, \lambda} - w_k)} \innpr{\delta, w_k}\\
    &=2 \lambda \innpr{\delta, w_k} \innpr{\delta, \frac{1}{\sqrt{n}}  \sum_{i \neq k} \frac{w_i}{\tau_k - \tau_i} \innpr{\hat{Z}_{n, \lambda} , w_k \otimes w_i }_2 } + o_\p (1). \label{d4}
\end{align}
 Recalling the notations of $w$ and $f$ in \eqref{h1} and \eqref{h2}, 
 and combining \eqref{d1} - \eqref{d4} we can rewrite
% For the proof of the first assertion \eqref{hd18a}  of Theorem \ref{thm9}
%we recall the notations of $w$ and $f$ in \eqref{h1} and \eqref{h2}, respectively, and note that it follows from Proposition \ref{prop12} that 
            \begin{align*}
           \mathbb{T}_n  (\lambda ) & :=  \sqrt{n} \sum_{k=1}^d \br{\innpr{D_n (\lambda, \vartheta_0), \hat{w}_{k,\lambda}}^2 - \lambda^2 \innpr{\delta, w_k}^2} \\ 
             & ~~~~~~
             = \frac{2 \lambda}{\sqrt{n}} \sum_{i=1}^{\gbr{\lambda \gbr{n \vartheta_0}}} \innpr{\eta_i^{(1)} \otimes \eta_i^{(1)} - c^{(1)}, f}_2  + \frac{2 \lambda}{\sqrt{n}} \sum_{i=\gbr{n \vartheta_0}+1}^{\gbr{n \vartheta_0} + \gbr{\lambda (n - \gbr{n \vartheta_0})}} \innpr{\eta_i^{(2)} \otimes \eta_i^{(2)} - c^{(2)}, f}_2 \\
                &~~~~~~  - \frac{2 \lambda \sqrt{n}}{\gbr{n \vartheta_0}} \sum_{i=1}^{\gbr{\lambda \gbr{n \vartheta_0}}} \innpr{\eta_i^{(1)}, w}  - \frac{2 \lambda \sqrt{n}}{n - \gbr{n \vartheta_0}} \sum_{i=\gbr{n \vartheta_0} +1}^{\gbr{n \vartheta_0} + \gbr{\lambda (n - \gbr{n \vartheta_0})}} \innpr{\eta_i^{(2)}, w} + o_\p (1)\\
                & ~~~~~~ = 2 \lambda \br{\tilde{Z}_n^{(1)} \br{\frac{\lambda \gbr{n \vartheta_0}}{n}} + \tilde{Z}_n^{(2)} \br{\frac{\gbr{n \vartheta_0} + \gbr{\lambda (n - \gbr{n \vartheta_0})}}{n}} - \tilde{Z}_n^{(2)} \br{\vartheta_0} } + o_\p (1),
            \end{align*}
            where for $\ell =1,2$ 
            \begin{align*}
                \tilde{Z}_n^{(\ell)} (\lambda) = \frac{1}{\sqrt{n}} \sum_{i=1}^{\gbr{\lambda n}} \br{\innpr{\eta_i^{(\ell)} \otimes \eta_i^{(\ell)} - c^{(\ell)}, \xi_\ell}_2 + \innpr{\eta_i^{(\ell)}, \zeta_\ell}}  
            \end{align*}
            with $\xi_1 = \xi_2 = f$, $\zeta_1 = - \frac{1}{\vartheta_0} w$ and $\zeta_2 = - \frac{1}{1-\vartheta_0} w$. 
         Now by Lemma \ref{general_lemma_b1}, we obtain
            \begin{align*}
         \{  \mathbb{T}_n  (\lambda )
         \}_{\lambda \in [0,1]}
             %   \sqrt{n} \sum_{k=1}^d \br{\innpr{D_n (\lambda, \vartheta_0), \hat{w}_{k,\lambda}}^2 - \lambda^2 \innpr{\delta, w_k}^2}
             \convproc &  \big \{ \tilde{\mathbb{Z}} \br{\lambda} \big  \}_{\lambda \in [0,1]} \\ 
            &  := 
             \big \{ 2 \lambda \tilde{\mathbb{Z}}^{(1)} \br{\lambda \vartheta_0} + 2 \lambda \br{\tilde{\mathbb{Z}}^{(2)} \br{\vartheta_0 + \lambda (1- \vartheta_0)} - \tilde{\mathbb{Z}}^{(2)} \br{\vartheta_0}} \big  \}_{\lambda \in [0,1]} ,
            \end{align*}
            where $\tilde{\mathbb{Z}}^{(1)} (\lambda) :=  \tilde{\Sigma}_{11}^* \mathbb{B}_1 (\lambda) + \tilde{\Sigma}_{12}^* \mathbb{B}_2(\lambda)$ and $\tilde{\mathbb{Z}}^{(2)} (\lambda) := \tilde{\Sigma}_{21}^* \mathbb{B}_1 + \tilde{\Sigma}_{22}^* \mathbb{B}_2$. $\tilde{\Sigma}_{ij}^*$ denotes the $ij$-th entry of the matrix $\tilde{\Sigma}^{1/2}$ and the entries of $\tilde{\Sigma} =(\tilde{\Sigma}_{ij})_{i,j=1,2}$ are defined by \eqref{hd19}
            with 
            %note that $\tilde{\Sigma}$ as well as $\tilde{\Sigma}^{1/2}$ depend on
            $\xi_1 = \xi_2 = f$, $\zeta_1 = - \frac{1}{\vartheta_0} w$ and $\zeta_2 = -\frac{1}{1-\vartheta_0} w$. Inspecting the covariance structure of the limiting process, we see that
            \begin{align*}
                \cov &\br{\tilde{\mathbb{Z}} (\lambda_1 ),  \tilde{\mathbb{Z}} (\lambda_2) }=  4\lambda_1\lambda_2 (\lambda_1 \wedge \lambda_2)(\tilde{\Sigma}_{11} \vartheta_0 + \tilde{\Sigma}_{22} (1-\vartheta_0))
            \end{align*}
            and hence the the weak convergence in \eqref{hd18a} follows.

\medskip
\noindent
        To prove  the second assertion \eqref{hd18b} 
        %show that $$\sup_{0 \leq \lambda \leq 1} (\innpr{D_n (\lambda, \vartheta_0), \hat{w}_{k,\lambda}}^2 - \innpr{D_n (\lambda, \hat{\vartheta}_n), \hat{w}_{k, \lambda}}^2) = o_\p (n^{-1/2}).$$  For this purpose 
        we note that we have by part 1 and 2 of Lemma \ref{lem:approx_for_cp_in_summation}
        \begin{align*}
            \sup_{0 \leq \lambda \leq 1}& (\innpr{D_n (\lambda, \hat{\vartheta}_n), \hat{w}_{k,\lambda}}^2 - \innpr{D_n (\lambda, \vartheta_0), \hat{w}_{k,\lambda}}^2)\\
                &=\sup_{0 \leq \lambda \leq 1} \br{\innpr{{D}_n (\lambda, \hat{\vartheta}_n) - {D}_n (\lambda, \vartheta_0), \hat{w}_{k, \lambda}}^2 + 2 \innpr{{D}_n (\lambda, \hat{\vartheta}_n) - {D}_n (\lambda, \vartheta_0), \hat{w}_{k, \lambda}} \innpr{{D}_n (\lambda, \vartheta_0), \hat{w}_{k, \lambda}}}\\
                &\leq \sup_{0 \leq \lambda \leq 1} \norm{{D}_n (\lambda, \hat{\vartheta}_n) - {D}_n (\lambda, \vartheta_0)}^2  \norm{\hat{w}_{k, \lambda}}^2 \\
                &\pppad\pppad  + 2\sup_{0 \leq \lambda \leq 1} \norm{{D}_n (\lambda, \hat{\vartheta}_n) - {D}_n (\lambda, \vartheta_0)} \norm{{D}_n (\lambda, \vartheta_0)} \norm{\hat{w}_{k,\lambda}}^2\\
                &= o_\p (n^{-1/2})
        \end{align*}
        
        Consequently, assertion \eqref{hd18b} follows from \eqref{hd18a}.

\end{document}